\documentclass[10pt]{amsart}
\usepackage{amsmath, a4wide}
\usepackage{amssymb}
\usepackage{verbatim}
\usepackage{xcolor}
\usepackage{subfigure}
\usepackage{epsfig}
%\usepackage[usenames,dvipsnames]{pstricks}
%\usepackage{pst-plot,pstricks-add}
%\usepackage{refcheck}
%\usepackage[notcite]{showkeys}
%\usepackage{wavelet}

%\textwidth 7.3true in \textheight 9.8
%true in \topmargin=-15pt
%\headheight=5pt \headsep=5pt \oddsidemargin -0.45true in
%\evensidemargin -0.45true in

\newtheorem{lemma}{Lemma}
\newtheorem{prop}[lemma]{Proposition}

\newtheorem{theorem}[lemma]{Theorem}

\newtheorem{example}{Example}

\newtheorem{definition}[lemma]{Definition}

\newcommand{\lfD}{\tiny{\Box}} %low frequency domain
\newcommand{\mcs}{\tau}

\newcommand{\ssupp}{\operatorname{sing\, supp}}

\newcommand{\cF}{\mathcal{F}}

\newcommand{\DCO}{\mathcal{D}(\Omega)}

\newcommand{\DCOpr}{\mathcal{D}'(\Omega)}

\newcommand{\N}{\mathbb{N}}    %natural numbers
 %Natural Nonnegative numbers
\newcommand{\R}{\mathbb{R}}    %real number field
    %torus =[0,2\pi]=R/[2\pi Z]
\newcommand{\Z}{\mathbb{Z}}    %integers
\newcommand{\cC}{\mathcal{C}}

\newcommand{\wf}{\operatorname{WF}}
\newcommand{\bs}{\backslash}
\newcommand{\wh}{\widehat}

\newcommand{\supp}{\operatorname{supp}}
\newcommand{\setsp}{\;:\;}     %set separator
\newcommand{\la}{\langle}
\newcommand{\ra}{\rangle}
\newcommand{\dirac}{\delta}   %Dirac sequence in dD

\newcommand{\bhcut}[1]{} %cut out some parts

\numberwithin{equation}{section}
%\numberwithin{lemma}{section}
\numberwithin{figure}{section}
%\numberwithin{example}{section}

\begin{document}

\title[Sobolev wavefront set \& local holder smoothness by shearlets]{Microlocal analysis and characterization of Sobolev wavefront sets using shearlets}
\author{Bin Han}

\address{Department of Mathematical and Statistical Sciences,
	University of Alberta, Edmonton,\quad Alberta, Canada T6G 2G1.
	\quad {\tt bhan@ualberta.ca}\quad
	{\tt http://www.ualberta.ca/$\sim$bhan}
}

\author{Swaraj Paul}

\address{Discipline of Mathematics, Indian Institute of Technology Indore, Simrol, Indore 453 552, India. \quad {\tt swaraj.lie@gmail.com}\quad {\tt nirajshukla@iiti.ac.in}
}

\author{Niraj K. Shukla}

\thanks{Research of B. Han was partially supported by the Natural Sciences and Engineering Research Council of Canada (NSERC). Research of S. Paul and N. K. Shukla was supported by a research grant from CSIR, New Delhi [25(0280)/18/EMR-II]}

\makeatletter \@addtoreset{equation}{section} \makeatother

\begin{abstract}
Sobolev wavefront sets and $2$-microlocal spaces play a key role in describing and analyzing the singularities of distributions in microlocal analysis and solutions of partial differential equations.
Employing the continuous shearlet transform to Sobolev spaces,
in this paper we characterize the microlocal Sobolev wavefront sets,
the $2$-microlocal spaces, and local H\"older spaces of distributions/functions.
We then establish the connections among Sobolev wavefront sets, $2$-microlocal spaces, and local H\"older spaces through the continuous shearlet transform.
\end{abstract}

\keywords{Continuous shearlet transform, Sobolev wavefront sets, H\"{o}lder smoothness, pseudo-differential operators, $2$-microlocal spaces, continuous wavelet transform}

\subjclass[2010]{42C40, 42C15} \maketitle

\pagenumbering{arabic}

%\clearpage

\section{Introduction and main results}

The continuous wavelet transform (CWT) has many applications such as signal and image processing for providing time-scale analysis of various types of data and functions.
Recall that the \emph{continuous wavelet transform} of a function $f\in L^2(\R)$ is defined to be
\begin{equation} \label{cwt}
\tilde{f}(a,b):=
%\left\langle f,\psi_{a,b}\right\rangle=
\int_{\mathbb{R}}|a|^{-\frac{1}{2}}\overline{
	 \psi\left(\tfrac{x-b}{a}\right)}f(x)dx, \qquad a\in \R\bs\{0\}, b\in \R,
\end{equation}
where $\psi\in L^2(\R)$ is \emph{an admissible wavelet}
satisfying $C_\psi:=\int_\R \frac{|\wh{\psi}(\xi)|^2}{|\xi|} d\xi<\infty$. Here
$\wh{f}(\xi):=\int_{\R} f(x) e^{-2\pi i x\xi} dx$ for $\xi\in \R$ is the Fourier transform of $f\in L^1(\R)$ and is extended to tempered distributions.
The function $f$ can be recovered from its CWT through
$$
f(x)=\frac{1}{C_{\psi}}\int_{\mathbb{R}}\int_{\mathbb{R}}\tilde{f}(a,b) \psi\left(\tfrac{x-b}{a}\right)\frac{dadb}{a^2},
$$
with convergence of the integral in the weak sense.
%Having good time-frequency localization, the continuous wavelet transform is often used in time-frequency analysis for signal/image processing and pattern recognition, etc.
Discretizing the continuous wavelet transform  by restricting $(a,b)$ in \eqref{cwt} to a discrete subset of $\R^2$, the (discretized) wavelets and their associated discrete wavelet expansions are often used to effectively and sparsely represent functions having point-like discontinuity/singularity, thanks to the good time-frequency localization and high vanishing moments of the underlying wavelet function $\psi$.
In multiple dimensions, singularities of discontinuity across a curve such as edges in an image often hold the key information of a multidimensional function/signal or the solution of a partial differential equation.
Though discrete wavelet expansions using classical wavelets are known to be optimal for capturing point-like singularities,
using isotropic dilations of wavelet functions, they are not that effective for capturing curve-like singularities.
Motivated by the ineffectiveness of wavelets for curve-like singularities,
\emph{curvelets} using rotation in Cand\'es and Donoho \cite{candes2004new} and \emph{shearlets} using shear transform in \cite{guo2006sparse}, whose underlying systems are discrete affine systems, provide an optimally sparse approximation of a cartoon-type image with discontinuity/singularity across a $C^2$ smooth curve.

For a nonempty open subset $\Omega\subseteq \R^d$, the test function space $\DCO$ consists of all compactly supported $C^\infty$ functions $\varphi: \Omega \rightarrow \mathbb{C}$ such that $\supp(\varphi)\subseteq \Omega$. Then its dual space $\DCOpr$ consists of all distributions in $\Omega$.
Our main goal of this paper is to characterize the Sobolev wavefront set of a distribution $f\in \DCOpr$
and the $2$-microlocal space of a function
using the continuous shearlet transform for Sobolev spaces, and then to explore their connections with local H\"{o}lder regularity. Before presenting the main results of this paper,
we first briefly review the
continuous shearlet transform and Sobolev wavefront sets of a distribution.

\subsection{Continuous shearlet transform}\label{shearlet}

Shearlets are related to wavelets with composite dilations, which were introduced in \cite{guo2006wavelets}.
%Replacing rotation in curvelets with
Using shear transform to preserve the rectangular grid/lattice structure, shearlets have been introduced in \cite{guo2006sparse,labate2005sparse}.
Let the (horizontal) shearlet $\psi\in L^2(\R^2)$ be given by
\begin{equation}\label{psidef}
	 \wh{\psi}(\xi)=\wh{\psi}(\xi_1,\xi_2):=\widehat{\psi_1}(\xi_1)\widehat{\psi_2}\left({\xi_2}/{\xi_1}\right),\qquad \mbox{for}\; \xi=(\xi_1,\xi_2)\in\mathbb{R}^2,\ \xi_1\neq 0,
\end{equation}
where the one-dimensional functions $\psi_1$ and $\psi_2$ satisfy the following conditions:
\begin{itemize}
	\item[(C1)] The function $\psi_1\in L^2(\mathbb{R})$ satisfies the Calder\'{o}n condition
	$\int_0^{\infty} |\widehat{\psi_1}(a)|^2\frac{da}{a}=
	\int_0^{\infty} |\widehat{\psi_1}(-a)|^2 \frac{da}{a}=
	1$, which is equivalent to
	 \begin{equation}\label{psi1:admissible}
		\int_0^{\infty} |\wh{\psi_1}(a\xi)|^2 \frac{da}{a}=1,\qquad \mbox{for all}\ \xi\in \R\bs\{0\}
	\end{equation}
	and $\widehat{\psi_1}\in \mathcal{D}(\mathbb{R})$ with $\supp(\widehat{\psi_1}) \subseteq [-2,-\frac{1}{2}]\cup [\frac{1}{2},2].$
	
	\item[(C2)] $\|\wh{\psi_2}\|_{L^2}=1$ and $\widehat{\psi_2}\in \mathcal{D}(\mathbb{R})$ with $\supp(\widehat{\psi_2}) \subseteq [-1,1]$.
	%and $\widehat{\psi_2}>0$ on $(-1,1)$.
\end{itemize}
If a shearlet $\psi$ is defined through \eqref{psidef}, then throughout the paper we always assume that $\psi_1$ and $\psi_2$ satisfy the conditions in (C1) and (C2).
Define the shear matrix $S_s$, the parabolic scaling matrix $D_a$ and the matrix $M_{as}$ by
\begin{equation}\label{mas}
	 S_s:=\begin{bmatrix}1&-s\\0&1\end{bmatrix},\qquad D_a:=\begin{bmatrix}a&0\\0&\sqrt{a}\end{bmatrix},\qquad
	M_{as}:=S_s D_a=
	\begin{bmatrix} a&-\sqrt{a} s\\0&\sqrt{a}\end{bmatrix}.
\end{equation}
%
%\quad \mbox{and}\quad M_{as}^{-1}=D_a^{-1}S_s^{-1}=
%\begin{bmatrix}a^{-1}&a^{-1}s\\0&a^{-1/2}\end{bmatrix}.
%\end{equation}
%
We shall use the following notation for shearlet elements:
\begin{equation}\label{psiast}
	\psi_{ast}:=a^{-{3}/{4}}\psi
	\left(M_{as}^{-1}(\cdot-t)\right)
	=a^{-{3}/{4}}\psi
	 \left(D_{a}^{-1}S_s^{-1}(\cdot-t)\right).
\end{equation}
Then $\{\psi_{ast} \setsp a\in (0,1],\ s\in [-2,2], t\in\mathbb{R}^2\}$
is called a (horizontal) \emph{continuous shearlet system} for
$L^2(\mathcal{C})^{\vee}:=\{ f\in L^2(\R) \setsp \supp(\wh{f}) \subseteq \mathcal{C}\}$,
where $\mathcal{C}$ is the horizontal cone given by
\begin{equation}\label{hcone}
	\mathcal{C}:=\left\{(\xi_1,\xi_2)\in \mathbb{R}^2 \setsp |\xi_1|\geq 2\quad \mbox{and}\quad |{\xi_2}/{\xi_1}|\leq 1\right\}.
\end{equation}
The vertical continuous shearlet system is obtained by switching the role of the horizontal and vertical axes. More precisely, the (vertical) shearlet $\psi^{(v)}:=\psi(J\cdot)$ and
$$
\psi^{(v)}_{ast}:=a^{-3/4} \psi^{(v)}(JM_{as}^{-1} J(\cdot-t))
=a^{-3/4}\psi(M_{as}^{-1} J(\cdot-t))
=\psi_{as(Jt)}(J\cdot),
$$
where $J:\R^2\rightarrow \R^2$ is the matrix mapping $(x,y)$ to $(y,x)$.
The (horizontal) \emph{continuous shearlet transform}  (CST) and
the (vertical) CST of a function $f$ on $\R^2$ is defined by
\begin{equation*}
	 \mathcal{SH}_{\psi}f(a,s,t):=\left\langle f,\psi_{ast}\right\rangle,\qquad
	 \mathcal{SH}^{(v)}_{\psi}f(a,s,t):=\langle f,\psi^{(v)}_{ast}\rangle,
\end{equation*}
for $a\in (0,1]$, $s\in [-2,2]$, and $t\in\mathbb{R}^2$.
Note that $\mathcal{SH}^{(v)}_\psi f(a,s,t)=\mathcal{SH}_\psi (f(J\cdot))(a,s,Jt)$.
Due to the conditions in (C1) and (C2) on $\psi_1$ and $\psi_2$, the horizontal and vertical shearlets are bandlimited.
See Figure~\ref{fig:testfig} for the frequency support of the shearlets for different values of $a$ and $s$.

\begin{figure}[tbhp]
	\centering \subfigure[]{\label{fig:a1}\includegraphics[width=6cm,height=4cm]{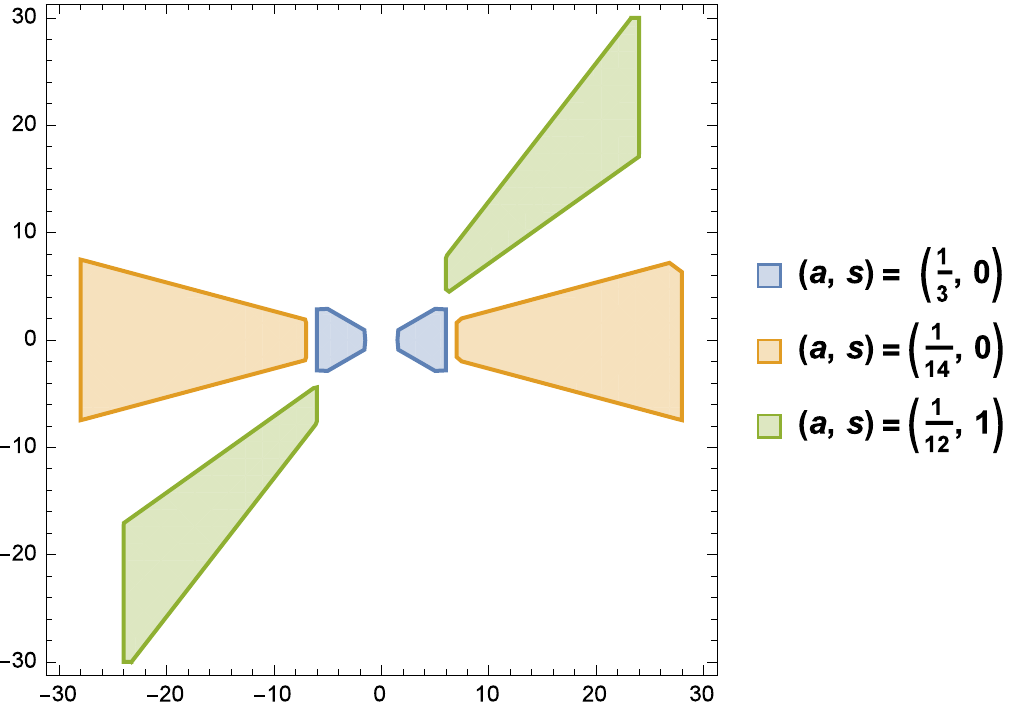}} \quad \subfigure[]{\label{fig:b1}\includegraphics[width=6.5cm,height=4cm]{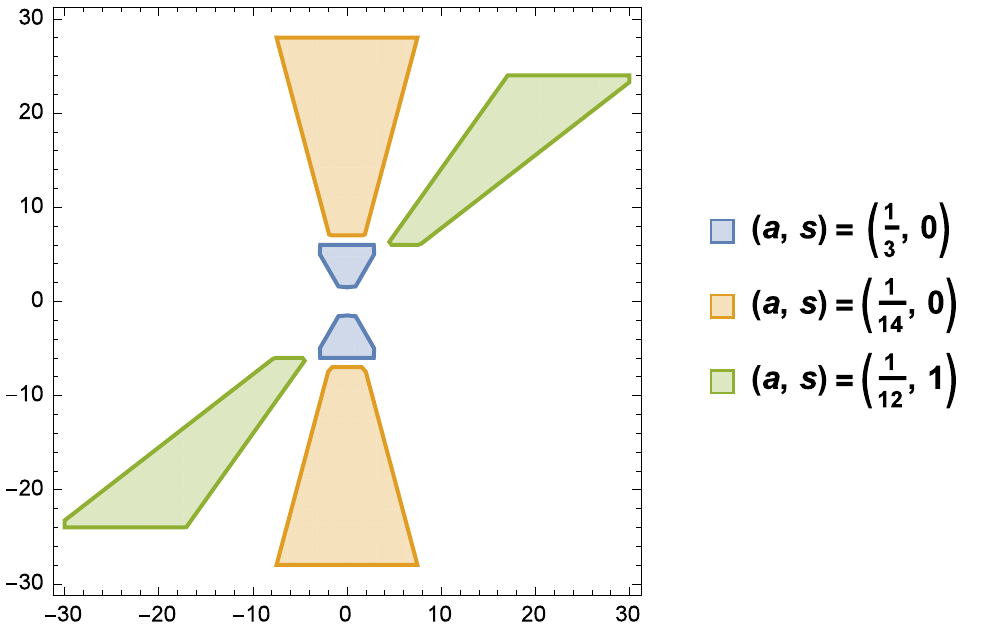}}
	\caption{The left-hand side is the support of
		$\wh{\psi_{ast}}$ for
		horizontal shearlet elements.
		Note that $|\wh{\psi_{ast}}|=|\wh{\psi_{as0}}|$.
		The right-hand side is the support of $\wh{\psi^{(v)}_{ast}}$
		for vertical shearlet elements with different values of $a$ and $s$.} \label{fig:testfig}
\end{figure}

Shearlets are particularly useful in representing anisotropic functions due to their properties of an affine-like system of well-localized waveforms at various scales, locations, and orientations.
Candes and Donoho \cite{candes2005continuous} introduced the curvelet transform to resolve the wavefront set of a distribution (see
Subsection~\ref{subsec:wfs} below). Kutyniok and Labate \cite{kutyniok2009resolution} showed that the CST also characterizes the wavefront set of a distribution.

\subsection{Sobolev wavefront sets}\label{subsec:wfs}

In microlocal analysis, the singularities of a distribution $f\in \DCOpr$ can be described through the wavefront set of $f$. Nowadays wavefront sets play an important role to represent time-machine space-times, quantum energy inequalities and cosmological models \cite{brouder2014smooth}.

(Sobolev) wavefront sets provide more refined description of the singular support of a distribution through localizing the distribution in the spatial domain and then using the properties of the Fourier transform.
They are frequently used in the study of regularities/singularities of solutions of PDEs \cite{hormander2003analysis}.
We now recall the singular support, the $C^{\infty}$-wavefront set, and the Sobolev  wavefront set of a distribution.

Recall that $\DCOpr$, as the dual space of the test function space $\DCO$, is the space of all distributions on a nonempty open subset $\Omega \subseteq \R^2$.
Let $f\in \DCOpr$ be a distribution.
A point $x_0\in \Omega$ is not in the \emph{singular support} of $f$, denoted by $\ssupp(f)$, if  there exists $\varphi\in \DCO$ such that $\varphi=1$ in a neighborhood of $x_0$ and $\varphi f\in C^\infty(\Omega)$. Obviously, $\ssupp(f)\subseteq \supp(f)$.
Roughly speaking, $\ssupp(f)$ is the complement of the set of points in $\Omega$ where $f$ is microlocally $C^\infty$.
In particular, if $x_0\not \in \ssupp(f)$, then $\varphi f\in \DCO$ and hence
$\widehat{\varphi f}$ must decay rapidly at infinity:
\begin{equation}\label{Fourierdecay}
	\sup_{\xi\in \R^2} (1+|\xi|)^N |\widehat{\varphi f}(\xi)|<\infty,\qquad \forall\ N\in \N_0:=\N\cup\{0\}.
\end{equation}
In other words, if $x_0\in \ssupp(f)$, then the above fast decay property in \eqref{Fourierdecay} fails.
However, it may happen that the fast decay property in \eqref{Fourierdecay} may fail only in certain directions.
We shall use $\xi_0\in \R^2\bs\{0\}$ for a possible direction where \eqref{Fourierdecay} may hold in an open cone with direction $\xi_0$.
For $\xi_0\in \R^2\bs\{0\}$ and $0<\epsilon<\pi/2$, an open cone $V_{\xi_0,\epsilon}$ with its vertex at the origin and pointing to the direction $\xi_0$ is defined to be
\begin{equation}\label{V}
	V_{\xi_0,\epsilon}:=\{ (r\cos \theta, r\sin \theta)  \setsp r>0, |\theta-\theta_0|<\epsilon\} \quad \mbox{with}\quad
	\theta_0:=\mbox{Arg}(\xi_0),
\end{equation}
where $\mbox{Arg}(\xi_0)$ is the principal angle of the direction $\xi_0$ (by identifying $\xi_0$ as a complex number).
The wavefront set of a distribution $f\in \DCOpr$ describes $\ssupp(f)$ by telling us in which direction the decay property in \eqref{Fourierdecay} fails.

\begin{definition}\label{cinfinitywavefront}
	($C^{\infty}$ wavefront set)  Let $f\in \DCOpr$ and $x_0 \in \ssupp(f)$. For $\xi_0\in \R^2\bs\{0\}$,
	we say that $(x_0,\xi_0)$ is not in the ($C^\infty$) wavefront set of $f$, if there exist $\varphi\in \DCO$ and $0<\epsilon<\pi/2$ such that $\varphi=1$ in a neighborhood of $x_0$ and
	 \begin{equation}\label{cinfinitydefn}
		\sup_{\xi \in V_{\xi_0,\epsilon}} (1+|\xi|)^N |\widehat{\varphi f}(\xi)|<\infty, \qquad \mbox{for all}\quad N\in \N_0.
	\end{equation}
	The ($C^{\infty}$) wavefront set of a distribution $f\in \DCOpr$ is denoted by $\wf(f)$.
\end{definition}

To check whether $(x_0,\xi_0)$ belongs to $\wf(f)$, we have to check infinitely many conditions in \eqref{cinfinitydefn}.
Using a different notion of regularity other than $C^{\infty}$, the Sobolev wavefront set is effective to describe the singularity  behavior of solutions of PDEs along different directions \cite{hormander1997lectures,petersen1983introduction}.  Recall that
$f\in H^m(\mathbb{R}^2)$ with $m\in\mathbb{R}$ if
$\|f\|_{H^m(\R^2)}^2:=\int_{\R^2} (1+|\xi|^2)^m |\wh{f}(\xi)|^2 d\xi<\infty$.
We now recall the definition of the Sobolev wavefront set of a distribution in \cite{hormander1997lectures}.

\begin{definition}\label{sobolevwf} (Sobolev wavefront set)
	Let $f\in \DCOpr$ and $x_0 \in \ssupp(f)$.
	For $\xi_0\in \R^2 \bs\{0\}$ and $m \in \R$, we say that $(x_0,\xi_0)$ is not in the  Sobolev wavefront set of order $m$ of the distribution $f$
	if there exist $\varphi\in \DCO$ and $0<\epsilon<\pi/2$
	such that $\varphi=1$ in a neighborhood of $x_0$ and
	 \begin{equation}\label{sobolevwavefront}
		 \int_{V_{\xi_0,\epsilon}}\left(1+|\xi|^2\right)^m\left|\widehat{\varphi f}(\xi)\right|^2 d\xi<\infty.
	\end{equation}
	The Sobolev wavefront set of order $m$ of a distribution $f\in \DCOpr$ is
	denoted by $\wf_m(f)$.
	In other words, $(x_0,\xi_0)\not \in \wf_m(f)$ if and only if $f$ is microlocally in the Sobolev space $H^{m}$ at $(x_0,\xi_0)$, written as $f\in H^m(x_0,\xi_0)$.
\end{definition}

Since the cone $V_{\xi_0,\epsilon}$ in \eqref{V} is independent of $|\xi|$, $(x_0,\xi_0)\in \wf_m(f)$ if and only if $(x_0, \xi_0/|\xi_0|)\in \wf_m(f)$.
%So, we often normalize a direction $\xi_0$ with $|\xi_0|=1$.
If $f$ is a real-valued tempered distribution, then $\wh{f}(-\xi)=\overline{\wh{f}(\xi)}$ and consequently, $(x_0,\xi_0)\in \wf_m(f)$ if
and only if $(x_0,-\xi_0)\in \wf_m(f)$. Note that the shearlet $\psi$ in \eqref{psidef} is often real-valued and has support symmetric about the origin.
%Need to check the direction + and -

\subsection{Main results on Sobolev wavefront sets}

We now state our main results
Theorem~\ref{thm:swf:integral} and Theorem~\ref{thm:swf:decay}
characterizing Sobolev wavefront sets using  the continuous shearlet transform.
To state our main results, we define
$B_{r_0}(x_0):=\{t\in \R^2 \setsp |t-x_0|<r_0\}$, which is the open disk with center $x_0\in \R^2$ and radius $r_0>0$.

The following result characterizes the Sobolev wavefront set of a distribution through the square integrability of the continuous shearlet transform coefficients.

\begin{theorem}\label{thm:swf:integral}
	Let $m\in \R$ and $\Omega\subseteq \R^2$ be a nonempty open subset of $\R^2$.
	Consider the horizontal and vertical shearlets defined in the Subsection~\ref{shearlet}.
	Let  $\xi_0\in \R^2\bs\{0\}$ and define $s_0:=\tan(\mbox{Arg}(\xi_0))\in \R\cup\{\infty\}$, i.e., $s_0$ is the slope of the direction $\xi_0$. For a real-valued distribution $f\in \DCOpr$ and $x_0\in \ssupp(f)$,
	the two points $(x_0, \pm \xi_0)$
	are not in the Sobolev wavefront set of the distribution $f$, i.e., $(x_0,\pm \xi_0)\not\in\wf_m(f)$ if and only if there exists $r_0>0$ such that
	\begin{align}
		&\int_{a=0}^{a=1}
		 \int_{s=s_0-r_0}^{s=s_0+r_0}\int_{ t\in B_{r_0}(x_0)}
		 \left|\mathcal{SH}_{\psi}f(a,s,t)\right|^2
		a^{-2m-3}dt ds da<\infty\quad \mbox{if}\quad |s_0|<2, \label{squareintegrabilityofshearlet}\\
		&\int_{a=0}^{a=1}
		 \int_{s=\frac{1}{s_0}-r_0}^{s=\frac{1}{s_0}+r_0}
		\int_{t\in B_{r_0}(x_0)}
		 \left|\mathcal{SH}_{\psi}^{(v)}f(a,s,t)\right|^2a^{-2m-3}dt dsda<\infty\quad \mbox{if}\quad |s_0|>1/2.\label{1squareintegrabilityofshearlet}
	\end{align}
\end{theorem}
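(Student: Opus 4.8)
The plan is to pass to the Fourier side, use Plancherel in the translation variable $t$, and then integrate out $a$ and $s$ against the weight $a^{-2m-3}$; this converts the shearlet integrability in \eqref{squareintegrabilityofshearlet} into an equivalent weighted $L^2$-estimate for $\widehat{\varphi f}$ over a cone around $\pm\xi_0$, which is exactly the defining condition of $\wf_m$ in Definition~\ref{sobolevwf}. Two preliminary reductions. First, because $\mathcal{SH}^{(v)}_\psi f(a,s,t)=\mathcal{SH}_\psi(f(J\cdot))(a,s,Jt)$ and $J$ interchanges the coordinate axes --- hence sends slope $s_0$ to slope $1/s_0$ and $B_{r_0}(x_0)$ to $B_{r_0}(Jx_0)$ --- the vertical assertion \eqref{1squareintegrabilityofshearlet} for $f$ at $(x_0,\xi_0)$ is the horizontal assertion \eqref{squareintegrabilityofshearlet} for $f(J\cdot)$ at $(Jx_0,J\xi_0)$, so it suffices to treat the horizontal case $|s_0|<2$. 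Second, shrinking $r_0$ so that $\overline{B_{r_0}(x_0)}\Subset\Omega$ and fixing $\chi\in\DCO$ with $\chi\equiv1$ near $\overline{B_{r_0}(x_0)}$, for $t\in B_{r_0}(x_0)$ we have $\mathcal{SH}_\psi f(a,s,t)=\la\chi f,\psi_{ast}\ra+\la(1-\chi)f,\psi_{ast}\ra$; since $\psi$ is Schwartz and the parabolic scaling $D_a$ together with the bounded shear $S_s$ give $\sup_{|x-t|\ge\delta}|\partial^\beta\psi_{ast}(x)|\le C_{\beta,N}\,a^N$ for all $N$ (uniformly for $s\in[-2,2]$, $t\in B_{r_0}(x_0)$), and since $(1-\chi)f$ is a distribution of finite order supported at distance $\ge\delta>0$ from $B_{r_0}(x_0)$, the second term is $O(a^\infty)$; as $\int_0^1 a^{2N}a^{-2m-3}\,da<\infty$ once $N>m+1$, it contributes only a finite (and arbitrarily small) amount to \eqref{squareintegrabilityofshearlet}, and the same remark lets us add it back. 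Hence in both directions we may assume $f=u\in\mathcal{E}'(\R^2)$, so $\widehat u\in C^\infty$ is polynomially bounded, and take the cutoff $\varphi$ of Definition~\ref{sobolevwf} to be $\chi$ (or a further cutoff supported where $\chi\equiv1$).

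The computational core is the identity, valid for $u\in\mathcal{E}'(\R^2)$,
\begin{equation*}
\int_{\R^2}|\mathcal{SH}_\psi u(a,s,t)|^2\,dt=a^{3/2}\int_{\R^2}|\widehat u(\xi)|^2\,|\widehat{\psi_1}(a\xi_1)|^2\,\Bigl|\widehat{\psi_2}\bigl(\tfrac{\xi_2-s\xi_1}{\sqrt a\,\xi_1}\bigr)\Bigr|^2\,d\xi,
\end{equation*}
which follows from $\widehat{\psi_{ast}}(\xi)=a^{3/4}e^{-2\pi i t\cdot\xi}\widehat\psi(M_{as}^\top\xi)$, the factorization $\widehat\psi(M_{as}^\top\xi)=\widehat{\psi_1}(a\xi_1)\widehat{\psi_2}\bigl((\xi_2-s\xi_1)/(\sqrt a\,\xi_1)\bigr)$, and Parseval in $t$. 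Multiplying by $a^{-2m-3}$, integrating over $a\in(0,1)$, $s\in(s_0-r_0,s_0+r_0)$, exchanging the order of integration (Tonelli, nonnegative integrand), evaluating the $s$-integral by $v=(\xi_2-s\xi_1)/(\sqrt a\,\xi_1)$ (using $\|\widehat{\psi_2}\|_{L^2}=1$ from (C2)), and then the $a$-integral by $b=a\xi_1$ (using the support and boundedness of $\widehat{\psi_1}$ from (C1)), one gets
\begin{equation*}
\int_0^1\!\int_{s_0-r_0}^{s_0+r_0}\!\int_{\R^2}|\mathcal{SH}_\psi u(a,s,t)|^2\,a^{-2m-3}\,dt\,ds\,da=\int_{\R^2}|\widehat u(\xi)|^2\,W_{r_0}(\xi)\,d\xi,
\end{equation*}
where $W_{r_0}\ge0$ is supported in the wedge $R_{r_0}:=\{\xi:\exists\,a\in(0,1),\,|s-s_0|\le r_0\text{ with }a|\xi_1|\in[\tfrac12,2]\text{ and }|\xi_2/\xi_1-s|\le\sqrt a\}$, obeys $W_{r_0}(\xi)\le C\,(1+|\xi|^2)^m$ everywhere, and obeys $W_{r_0}(\xi)\ge c\,(1+|\xi|^2)^m$ with $c>0$ whenever $\xi$ lies in the narrower bi-cone $\{|\xi_2/\xi_1-s_0|\le r_0/2\}$ with $|\xi_1|$ large enough (depending on $r_0$) --- for such $\xi$ the $v$-substitution sweeps all of $\supp\widehat{\psi_2}$ and $c$ is positive by the Calder\'on condition (C1). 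Moreover $\xi\in R_{r_0}$ forces $|\xi|\asymp 1/a$, hence $a\to0$ as $|\xi|\to\infty$ along $R_{r_0}$, so $R_{r_0}\cap\{|\xi|>M\}\subseteq V_{\xi_0,\epsilon}\cup V_{-\xi_0,\epsilon}$ once $M$ is large (for given $\epsilon$, after first choosing $r_0$ small); here $|s_0|<2$ with $s_0=\tan(\mathrm{Arg}\,\xi_0)\in\R$ ensures that $\{|\xi_2/\xi_1-s_0|\le\rho\}$ is, for small $\rho$, exactly such a bi-cone $V_{\xi_0,\epsilon}\cup V_{-\xi_0,\epsilon}$.

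Both implications now follow. If $(x_0,\pm\xi_0)\notin\wf_m(f)$, choose $\varphi$ and $\epsilon$ as in Definition~\ref{sobolevwf} with $\int_{V_{\xi_0,\epsilon}\cup V_{-\xi_0,\epsilon}}(1+|\xi|^2)^m|\widehat{\varphi f}(\xi)|^2\,d\xi<\infty$ (the $-\xi_0$ cone comes for free since $f$ is real-valued), then take $r_0$ small and $M$ large as above; the core identity gives $\int_{\R^2}|\widehat u|^2 W_{r_0}\le C\bigl(\int_{|\xi|\le M}(1+|\xi|^2)^m|\widehat u|^2+\int_{V_{\xi_0,\epsilon}\cup V_{-\xi_0,\epsilon}}(1+|\xi|^2)^m|\widehat u|^2\bigr)<\infty$, the first term being finite as $\widehat u$ is continuous and polynomially bounded; restoring the $O(a^\infty)$ localization error yields \eqref{squareintegrabilityofshearlet}. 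Conversely, if \eqref{squareintegrabilityofshearlet} holds for some $r_0>0$, the core identity gives $\int_{\{|\xi_2/\xi_1-s_0|\le r_0/2,\;|\xi_1|\text{ large}\}}(1+|\xi|^2)^m|\widehat u|^2\le c^{-1}\int_{\R^2}|\widehat u|^2 W_{r_0}<\infty$, and adding the (finite) contribution of the bounded-frequency part of the bi-cone shows $\int_{V_{\xi_0,\epsilon}\cup V_{-\xi_0,\epsilon}}(1+|\xi|^2)^m|\widehat{\varphi f}|^2<\infty$ for a suitably small $\epsilon$, i.e. $(x_0,\pm\xi_0)\notin\wf_m(f)$.

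I expect the main obstacle to be the localization estimate --- the uniform bound $\la(1-\chi)f,\psi_{ast}\ra=O(a^\infty)$ --- since it requires tracking, quantitatively and uniformly over $s\in[-2,2]$ and $t\in B_{r_0}(x_0)$, the Schwartz decay of $\psi_{ast}$ away from its translation center $t$ under the combined parabolic dilation $D_a$ and shear $S_s$, in order to outpace the loss $a^{-3/4-|\beta|}$ coming from $D_a^{-1}$ and the finite order of $(1-\chi)f$. A second point needing care (though elementary) is the geometric matching: one must order the choices of $r_0$ and $M$ so that $R_{r_0}\cap\{|\xi|>M\}$ falls inside the prescribed cone $V_{\xi_0,\epsilon}\cup V_{-\xi_0,\epsilon}$ for the direct implication, while $R_{r_0}$ still contains a genuine bi-cone of high-frequency directions for the converse. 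The $t$-Plancherel identity, and the two substitutions producing $W_{r_0}$ with (C1)--(C2), are then routine.
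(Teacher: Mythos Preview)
Your necessity direction is correct: once $u=\chi f\in\mathcal{E}'(\R^2)$, the Plancherel identity over all $t\in\R^2$ dominates the integral over $B_{r_0}(x_0)$, and your two-sided analysis of the weight $W_{r_0}$ (upper bound $\le C(1+|\xi|^2)^m$, support in a wedge collapsing into $V_{\pm\xi_0,\epsilon}$ for $|\xi|$ large) then finishes it cleanly.

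The sufficiency direction has a genuine gap. Your core identity has $t$ ranging over all of $\R^2$, whereas the hypothesis \eqref{squareintegrabilityofshearlet} only controls $t\in B_{r_0}(x_0)$. With your choice of cutoff ($\chi\equiv1$ near $\overline{B_{r_0}(x_0)}$), the support of $u=\chi f$ strictly \emph{contains} $B_{r_0}(x_0)$; for $t$ in the annulus $\supp\chi\setminus B_{r_0}(x_0)$ there is zero separation between $t$ and $\supp u$, so neither your Schwartz-decay estimate nor the Kutyniok--Labate lemma gives any smallness of $\langle u,\psi_{ast}\rangle$ there. The crude bound $|\langle u,\psi_{ast}\rangle|\lesssim a^{1/4}$ on that annulus yields $\int_0^1 a^{-2m-5/2}\,da$, divergent for every $m\ge-3/4$. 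You therefore cannot conclude $\int|\widehat u|^2 W_{r_0}<\infty$, and the lower bound on $W_{r_0}$ never gets used. The obstacle you flag at the end (the $O(a^\infty)$ bound for $\langle(1-\chi)f,\psi_{ast}\rangle$) is the easy part; the missing part is the complementary tail $\int_{|t-x_0|>r_0}|\langle u,\psi_{ast}\rangle|^2 a^{-2m-3}\,dt$.

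The paper avoids this entirely: for sufficiency it never extends the $t$-integral. It decomposes $\widehat{\varphi f}$ via the reproducing formula as a low-frequency piece plus $\widehat{g_1}+\widehat{g_2}$ (and vertical analogues), where $g_1$ gathers the coefficients with $(s,t)\in I_{s_0}\times B_{r_0}(x_0)$ and $g_2$ the rest. The key Proposition~\ref{microlocalsquareintegrability} bounds $\int_{U_{\pm\xi_0,\epsilon}}|\widehat{g_1}|^2|\xi|^{2m}$ directly by the \emph{localized} shearlet integral through a Cauchy--Schwarz argument in $a$ and $s$; Plancherel is applied to the $t$-truncated object $\chi_{B_{r_0}(x_0)}\mathcal{SH}_\psi(\varphi f)(a,s,\cdot)$, so no $t$-tail appears. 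Your route can be repaired by taking instead $\supp\varphi\subset B_{r_0'}(x_0)$ with $r_0'<r_0$, then controlling $\int_{|t-x_0|>r_0}|\langle\varphi f,\psi_{ast}\rangle|^2a^{-2m-3}$ via Kutyniok--Labate decay and separately showing $\int_{B_{r_0}(x_0)}|\langle(1-\varphi)f,\psi_{ast}\rangle|^2a^{-2m-3}<\infty$; but these are precisely the space-side estimates the paper isolates as Proposition~\ref{convg2} (the $h_2$ part) and Lemma~\ref{lem:nicepart}, and your proposal does not supply them.
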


%Let $\xi_0\in \R^2\bs\{0\}$ with slope $s_0:=\tan(\mbox{Arg}(\xi_0))$.
%As we shall see in the proof of \ref{thm:swf:integral} that
%for $1/2<|s_0|<2$, it suffices to check either \ref{squareintegrabilityofshearlet} or
%\ref{1squareintegrabilityofshearlet}. More precisely,
Due to the relation
$\mathcal{SH}^{(v)}_\psi f(a,s,t)=\mathcal{SH}_\psi (f(J\cdot))(a,s,Jt)$, for every $0<|s_0|<\infty$ and sufficiently small $r_0>0$,
\eqref{squareintegrabilityofshearlet} is finite if and only if \eqref{1squareintegrabilityofshearlet} is finite. In Theorem~\ref{thm:swf:integral}, if $f\in \mathcal{D}'(\Omega)$ cannot be regarded as a tempered distribution on $\R^2$, then it is understood that $f$ in both \eqref{squareintegrabilityofshearlet} and \eqref{1squareintegrabilityofshearlet} should be replaced by $\varphi f$, where $\varphi\in \mathcal{D}(\Omega)$ takes value $1$ in a neighborhood of $x_0$.

For $m\in \R$ and $f\in \DCOpr$, we say that $\mathcal{SH}_\psi f(a,s,t)=\mathcal{O}(a^k)$ as $a\to 0^+$ uniformly when $(s,t)$ is near $(s_0,x_0)$
if there exist $r_0>0$ and $C>0$ such that
\begin{equation}\label{cst:decay}
	|\mathcal{SH}_\psi f(a,s,t)|\le C a^{k}\qquad \forall\, a\in (0,r_0), |s-s_0|<r_0, |t-x_0|<r_0.
\end{equation}

Using Theorem~\ref{thm:swf:integral},
the following result characterizes the Sobolev wavefront set of a distribution through the decay property of CST coefficients as in \eqref{cst:decay}.

\begin{theorem}\label{thm:swf:decay}
	Let $m\in \R$ and $\Omega\subseteq \R^2$ be a nonempty open subset. For a real-valued distribution $f\in \DCOpr$,
	 \begin{equation}\label{wfm:cst:decay}
		[\Lambda_1(m+1+\epsilon)\cup  \Lambda_2(m+1+\epsilon)] \subseteq \wf_m(f)^c,\qquad \forall\, m\in \R, \epsilon>0
	\end{equation}
	and
	\begin{equation} \label{wfm:cst:decay:2}
		\wf(f)^c=\Lambda_1(\infty) \cup \Lambda_2(\infty),
	\end{equation}
	where the sets $\Lambda_1(m)$ and $\Lambda_2(m)$ are defined as follows:
	\begin{align*}
		\Lambda_1(m):=\Big\{ (x_0, (r, rs_0)) \setsp & x_0\in \Omega, s_0 \in (-2,2), r\in \R\bs\{0\},\;
		 |\mathcal{SH}_{\psi}f(a,s,t)|=\mathcal{O}(a^m)\\ &\mbox{as $a \to 0^+$ uniformly when $(s,t)$ is near $(s_0, x_0)$}\Big\},\\
		\Lambda_2(m):=\Big\{ (x_0, (rs_0, r)) \setsp & x_0\in \Omega, s_0 \in (-2,2), r\in \R\bs\{0\},\;
		 |\mathcal{SH}^{(v)}_{\psi}f(a,s,t)|=\mathcal{O}(a^m)\\ &\mbox{as $a \to 0^+$ uniformly when $(s,t)$ is near $(s_0, x_0)$}\Big\},
	\end{align*}
	and $\Lambda_1(\infty):=(\cap_{k\in \N} \Lambda_1(k))^{\mathrm{o}}$
	and $\Lambda_2(\infty):=(\cap_{k\in \N} \Lambda_2(k))^{\mathrm{o}}$, where $E^{\mathrm{o}}$ stands for the interior of a set $E$.
\end{theorem}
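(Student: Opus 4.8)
The plan is to derive Theorem~\ref{thm:swf:decay} from Theorem~\ref{thm:swf:integral} by the standard trade-off between pointwise decay and square integrability. First I would prove the inclusion \eqref{wfm:cst:decay}. Suppose $(x_0,(r,rs_0))$ lies in $\Lambda_1(m+1+\epsilon)$ for some $\epsilon>0$, with $s_0\in(-2,2)$; then there are $r_0>0$ and $C>0$ with $|\mathcal{SH}_\psi f(a,s,t)|\le C a^{m+1+\epsilon}$ for all $a\in(0,r_0)$, $|s-s_0|<r_0$, $|t-x_0|<r_0$. I would plug this bound into the triple integral in \eqref{squareintegrabilityofshearlet}: the $s$- and $t$-integrations over the bounded region $[s_0-r_0,s_0+r_0]\times B_{r_0}(x_0)$ contribute a finite constant, so the whole quantity is controlled by a constant times $\int_0^{r_0} a^{2(m+1+\epsilon)}a^{-2m-3}\,da = \int_0^{r_0} a^{2\epsilon-1}\,da$, which converges because $2\epsilon-1>-1$. (One should also split off the harmless region $a\in[r_0,1]$, where the integrand is bounded since $\mathcal{SH}_\psi f$ is continuous in $(a,s,t)$ on a compact set, or alternatively shrink $r_0$ so that $r_0\le 1$ from the outset.) Hence \eqref{squareintegrabilityofshearlet} holds, and by Theorem~\ref{thm:swf:integral} we get $(x_0,\pm(r,rs_0))\notin\wf_m(f)$; by homogeneity of $\wf_m$ in the frequency variable this is exactly the statement that $(x_0,(r,rs_0))\in\wf_m(f)^c$. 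The argument for $\Lambda_2(m+1+\epsilon)$ is identical using \eqref{1squareintegrabilityofshearlet} and the vertical transform, noting that if $s_0\in(-2,2)$ parametrizes the vertical cone then the relevant slope condition $|s_0|>1/2$ need not hold for all such $s_0$; here I would instead observe that $\Lambda_2$ uses the vertical transform directly and that Theorem~\ref{thm:swf:integral} applied to $f(J\cdot)$ (equivalently, the remark following it) gives the square-integrability criterion for directions in the vertical cone, so the same computation applies.

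For the identity \eqref{wfm:cst:decay:2}, I would prove the two inclusions separately. For ``$\supseteq$'': if $(x_0,\xi_0)\in\Lambda_1(\infty)\cup\Lambda_2(\infty)$, then since $\Lambda_j(\infty)=(\cap_k\Lambda_j(k))^{\mathrm o}$ is open, there is a neighborhood $U$ of $(x_0,\xi_0)$ with $U\subseteq\Lambda_j(k)$ for every $k\in\N$. By the already-proved \eqref{wfm:cst:decay}, $\Lambda_j(k)=\Lambda_j((k-1-\epsilon)+1+\epsilon)\subseteq\wf_{k-1-\epsilon}(f)^c$ for suitable $\epsilon$, so $U\subseteq\wf_m(f)^c$ for every $m\in\R$. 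The classical relation $\wf(f)=\overline{\bigcup_{m\in\R}\wf_m(f)}$ (equivalently, $\wf(f)^c=\bigl(\bigcap_m\wf_m(f)^c\bigr)^{\mathrm o}$) from Hörmander's theory then gives $U\subseteq\wf(f)^c$, hence $(x_0,\xi_0)\in\wf(f)^c$. For ``$\subseteq$'': suppose $(x_0,\xi_0)\notin\wf(f)$. Then $(x_0,\xi_0)$ has an open conic neighborhood on which the $C^\infty$ decay \eqref{cinfinitydefn} holds; by the Kutyniok--Labate type characterization underlying Theorem~\ref{thm:swf:integral} — more directly, by running the ``if'' direction's frequency-domain estimate with arbitrary polynomial decay — one obtains that $\mathcal{SH}_\psi f$ (or $\mathcal{SH}^{(v)}_\psi f$, depending on whether $\xi_0$ lies in the horizontal or vertical cone) decays faster than any power of $a$, uniformly for $(s,t)$ near the corresponding $(s_0,x_0)$, and this persists on a whole neighborhood of $(x_0,\xi_0)$ in the $(x,\xi)$-space. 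That neighborhood is therefore contained in $\Lambda_j(k)$ for every $k$, so $(x_0,\xi_0)$ lies in its interior, i.e. in $\Lambda_j(\infty)$.

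The main obstacle I anticipate is bookkeeping at the boundary between the horizontal and vertical cones and the mismatch of parameter ranges: Theorem~\ref{thm:swf:integral} requires $|s_0|<2$ for the horizontal criterion and $|s_0|>1/2$ for the vertical one, whereas $\Lambda_1,\Lambda_2$ as defined both range over $s_0\in(-2,2)$. For a direction $\xi_0$ whose slope is, say, very large (near-vertical), only the vertical transform sees it and the parametrization is by $1/s_0\in(-2,2)$; for a near-horizontal direction only the horizontal transform applies. One must check that every direction $\xi_0\in\R^2\setminus\{0\}$ falls into at least one of the two regimes — which it does, since the union of $|s_0|<2$ and $|s_0|>1/2$ (slopes, including $\infty$) is everything — and that on the overlap the two decay conditions are equivalent, which follows from the identity $\mathcal{SH}^{(v)}_\psi f(a,s,t)=\mathcal{SH}_\psi(f(J\cdot))(a,s,Jt)$ already recorded after Theorem~\ref{thm:swf:integral}. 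The remaining steps — substituting a pointwise bound into an integral, and invoking $\wf(f)=\overline{\bigcup_m\wf_m(f)}$ — are routine.
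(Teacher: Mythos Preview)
Your proposal is correct and follows essentially the same route as the paper: for \eqref{wfm:cst:decay} you substitute the pointwise bound into the triple integral of Theorem~\ref{thm:swf:integral} and check $\int_0^{r_0}a^{2\epsilon-1}\,da<\infty$, while the paper handles the tail $a\in[r_0,1]$ via its Lemma~\ref{lem:sh:a>r0} rather than your continuity remark; for \eqref{wfm:cst:decay:2} both directions match the paper's argument, which uses Proposition~\ref{sobolevineq} for ``$\supseteq$'' and, for ``$\subseteq$'', makes explicit the decomposition $f=\varphi f+(1-\varphi)f$ together with the estimate \eqref{est:decayk} for the far part and the Cauchy--Schwarz bound on $\langle\varphi f,\psi_{ast}\rangle$ (exactly the ``necessity'' computation from the proof of Theorem~\ref{thm:swf:integral}, run for each $m\in\N$) for the near part---this is precisely what you describe as ``running the `if' direction's frequency-domain estimate with arbitrary polynomial decay.'' Your cone-bookkeeping discussion is more careful than the paper's, which simply writes ``without loss of generality $|s_0|<2$'' and ``similarly for $\Lambda_2$.''
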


The proofs of Theorems~\ref{thm:swf:integral} and~\ref{thm:swf:decay} will be given in
Section~\ref{sec:swf:proof}.
The identity in \eqref{wfm:cst:decay:2} characterizing $C^{\infty}$ wavefront sets through $\Lambda_1(\infty) \cup \Lambda_2(\infty)$ was already given by Kutyniok and Labate \cite[Theorem~5.1]{kutyniok2009resolution}.
Our approach of characterizing $C^{\infty}$ wavefront sets in \eqref{wfm:cst:decay:2} is different by taking advantage of the identity in \eqref{wfm:cst:decay} characterizing  Sobolev wavefront sets.
Theorem~\ref{thm:swf:decay} shows
that the continuous shearlet transform resolves the Sobolev wavefront set precisely as $a\to 0^+$ asymptotically.
The consistency of the result in Theorem~\ref{thm:swf:decay} will be verified through Proposition~\ref{sobolevineq}.

\subsection{Main results on $2$-microlocal spaces}

The local pointwise H\"older exponent is a key concept in multifractal analysis and signal processing. Recall that

\begin{definition}
	A function $f$ on $\R^d$ belongs to a local H\"older space $C^{\mcs}_{x_0}(\R^d)$ with $\mcs>0$ and $x_0\in \R^d$ if there exist a positive constant $C$ and a polynomial $P_{x_0}$ of (total) degree less than $\mcs$ such that for all $x$ in a neighborhood of $x_0$,
	\begin{equation}\label{pointholder}
		|f(x)-P_{x_0}(x)|\leq C \|x-x_0\|^{\mcs}.
	\end{equation}
\end{definition}

Using wavelet transform,
Jaffard \cite{jaffard1991pointwise}
characterized pointwise and uniform H\"older regularities, which can be also studied by
curvelets and Hart Smith transforms in \cite{nualtong2005analysis,sampo2009estimations},
as well as by continuous and discrete shearlet transforms in \cite{lakhonchai2010shearlet}.
But the local H\"older exponent fails to fully characterize regularity of a function/distribution, e.g., the stability theory through the action of pseudo-differential operators to a function/distribution. To overcome such difficulty,
generalizing local H\"older spaces,
Jaffard \cite{jaffard1991pointwise} (also see \cite{jaffardmeyer1996}) introduced
$2$-microlocal spaces below.

%Bony~\cite{jean1986second} through Littlewood-Paley decomposition for studying the propagation of singularity of semi-linear hyperbolic PDEs.

Let $\varphi$ be a Schwartz function on $\R^d$ such that $\wh{\varphi}(\xi)=1$ for $\xi\in B_{1/2}(0)$ and $\supp(\wh{\varphi})\subseteq B_1(0)$.
Define $\varphi_j=2^{j+1}\varphi (2^{j+1}\cdot)-2^{j}\varphi(2^j\cdot)$ for $j \in \N_0$. Then the Littlewood-Paley decomposition of a tempered distribution $f$ is a set of tempered distributions $\{ S_0f,\Delta_j f\}_{j\in \N_0}$, where $S_0f:=\varphi*f$ and $\Delta_j f:=\varphi_j*f$. Note that $f=S_0 f+\sum_{j=0}^\infty \Delta_j f$, due to $\wh{\varphi}(\xi)+\sum_{j=0}^\infty \wh{\varphi_j}(\xi)=1$ for all $\xi\in \R^d$.

\begin{definition}\label{def:mcs}
	Let $x_0\in \mathbb{R}^d$ and $\mcs,\mcs'\in \R$. A tempered distribution $f$ on $\R^d$ belongs to the $2$-microlocal space $C_{x_0}^{\mcs,\mcs'}(\R^d)$,
	if there exists a positive constant $C$ such that for all $x\in \R^d$ and $j\in \N_0$,
	\[
	|S_0f(x)| \leq C (1+\|x-x_0\|)^{-\mcs'}
	\quad \mbox{and}\quad  |\Delta_j f(x)|\leq C 2^{-j\mcs} (1+2^j\|x-x_0\|)^{-\mcs'}.
	\]
\end{definition}

%\textcolor{red}{A $2$-microlocal space is
%an efficient tool to study the regularity/singularity in many areas} \textcolor{blue}{
A 2-microlocal space is an efficient tool to study the regularity/singularity in areas such as theory of semi-linear hyperbolic partial differential equations and image processing, is stable under the action of pseudo-differential operators, and combines the local and pointwise H\"older regularities in a single condition.
Jaffard \cite{jaffard1991pointwise} first described the connection between pointwise H\"older smoothness at a point and a $2$-microlocal space. He also gave an equivalent characterization of a $2$-microlocal space by means of the behavior of the (discrete or continuous) wavelet coefficients.
We now state and prove some necessary and sufficient conditions on a $2$-microlocal space in terms of the behavior of the continuous shearlet transform. This helps us to derive connections between $2$-microlocal spaces and Sobolev wavefront sets of a distribution.

\begin{theorem}\label{thm:2micro}
	Let $x_0\in \mathbb{R}^2$ and  $\mcs,\mcs'\in \R$ with $\mcs+\mcs'>0$ and $\mcs'<0$ such that $\mcs+\mcs'\not\in \N$.
	Then the following statements hold:
	\begin{enumerate}
		\item[(i)] If $f\in C_{x_0}^{\mcs,\mcs'}(\R^2)$, then
		there exists a positive constant $C$ such that the horizontal and vertical continuous shearlet transforms satisfy
		{\small
			 \begin{equation}\label{necessary2micro}
				 \max\left(|\mathcal{SH}_{\psi}f(a,s,t)|, |\mathcal{SH}^{(v)}_{\psi}f(a,s,t)|\right)\leq C a^{\frac{3}{4}+\frac{\mcs+\lfloor \mcs\rfloor}{2}}\left(1+\left\|\tfrac{t-x_0}{\sqrt{a}}\right\|^{-\mcs'}\right),
			\end{equation}
		}
		for all $a\in(0,1)$, $s\in[-2,2]$ and $t\in \mathbb R^2$, where $\lfloor \mcs\rfloor$ is the largest integer $\le \mcs$.
		
		\item[(ii)] For a tempered distribution $f$ on $\R^2$, if there exists $C>0$ such that
		{\small
			 \begin{equation}\label{sufficient2micro1}
				 \max\left(|\mathcal{SH}_{\psi}f(a,s,t)|, |\mathcal{SH}^{(v)}_{\psi}f(a,s,t)|\right)\leq
				C a^{\frac{5}{4}+\mcs}\left(1+\left\|\tfrac{t-x_0}{\sqrt{a}}\right\|^{-2\mcs'}\right),
			\end{equation}
		}
		for all $a\in(0,1)$, $s\in[-2,2]$ and $t\in \mathbb R^2$, then $f\in C_{x_0}^{\mcs,\mcs'}(\R^2)$ and $f\in C_{x_0}^\mcs(\R^2)$.
	\end{enumerate}
\end{theorem}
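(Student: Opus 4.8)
The plan is to prove both implications by relating the continuous shearlet transform to the Littlewood--Paley decomposition defining $C^{\mcs,\mcs'}_{x_0}$, using three features of $\psi$. First, since $\wh\psi$ vanishes near the origin and outside the strip $|\xi_1|<\tfrac12$, the shearlet $\psi$ has infinitely many vanishing moments ($\int v^\alpha\psi(v)\,dv=0$ for all $\alpha$), and $\wh{\psi_{ast}}$ is supported where $|\xi_1|\asymp a^{-1}$ and $|\xi_2/\xi_1-s|\lesssim\sqrt a$, so $\psi_{ast}$ is concentrated in an $a\times\sqrt a$ box about $t$ and is Schwartz after the sheared change of variables $x=t+S_sD_av$. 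Second, (C1)--(C2) yield the exact Calder\'{o}n reproducing identity $g=\int_0^1\!\int_{-2}^{2}\!\int_{\R^2}\langle g,\psi_{ast}\rangle\,\psi_{ast}\,a^{-3}\,dt\,ds\,da$ for $g$ with $\wh g$ supported in the horizontal cone $\mathcal{C}$, and its analogue on the vertical cone $J\mathcal{C}$. Third, $f\in C^{\mcs,\mcs'}_{x_0}\iff f(J\cdot)\in C^{\mcs,\mcs'}_{Jx_0}$, which together with $\mathcal{SH}^{(v)}_\psi f(a,s,t)=\mathcal{SH}_\psi(f(J\cdot))(a,s,Jt)$ and $\|J(t-x_0)\|=\|t-x_0\|$ reduces the vertical estimates in both parts to the horizontal ones.

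For (i), write $f=S_0f+\sum_{j\ge0}\Delta_jf$. Since $\wh{\psi_{ast}}$ lives where $|\xi|\asymp a^{-1}$, only the finitely many blocks with $2^{-j}\asymp a$ (plus $S_0f$ when $a$ is bounded below) contribute to $\mathcal{SH}_\psi f(a,s,t)$, so it suffices to bound $\langle\Delta_jf,\psi_{ast}\rangle=a^{3/4}\int\Delta_jf(t+S_sD_av)\,\overline{\psi(v)}\,dv$ for $2^{-j}\asymp a$. Here I would combine: the pointwise hypothesis $|\Delta_jf(x)|\le C2^{-j\mcs}(1+2^j\|x-x_0\|)^{-\mcs'}$, the weighted Bernstein estimates $|\partial^\beta\Delta_jf(x)|\lesssim 2^{j|\beta|}2^{-j\mcs}(1+2^j\|x-x_0\|)^{-\mcs'}$, the subtraction of the degree-$\lfloor\mcs\rfloor$ Taylor polynomial of $v\mapsto\Delta_jf(t+S_sD_av)$ (legitimate by the vanishing moments of $\psi$), and the rapid decay of $\psi$, tracking the anisotropy through the fact that the columns of $S_sD_a$ have size $\lesssim\sqrt a$. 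On the essential support of $\psi_{ast}$ one has $\|x-x_0\|\le\|t-x_0\|+C\sqrt a$, which produces the weight $1+\|(t-x_0)/\sqrt a\|^{-\mcs'}$, and assembling the powers of $a$ (from $a^{3/4}$, the $\lfloor\mcs\rfloor$ Taylor terms, and the fractional scale $\sqrt a$) gives the exponent $\tfrac34+\tfrac{\mcs+\lfloor\mcs\rfloor}2$. The vertical bound then follows as in the third feature above.

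For (ii), invert the scheme: decompose $\wh f=\wh f\,\chi_{\mathcal{C}}+\wh f\,\chi_{J\mathcal{C}}+\wh f_{\mathrm{low}}$, where $\wh f_{\mathrm{low}}$ is supported in a fixed compact set and thus contributes a band-limited smooth function whose Littlewood--Paley blocks vanish beyond a fixed scale (this low-frequency term is disposed of separately under the standing hypotheses). The reproducing identity recovers the cone parts of $f$ from $\mathcal{SH}_\psi f$ and $\mathcal{SH}^{(v)}_\psi f$; applying $\Delta_j$ and $S_0$ and again using $\supp\,\wh{\psi_{ast}}\subseteq\{|\xi|\asymp a^{-1}\}$, the $a$-integral collapses to $a\asymp 2^{-j}$ for $\Delta_jf$, turning $\Delta_jf(x)$ into an integral of $\mathcal{SH}_\psi f(a,s,t)$ against $(\Delta_j\psi_{ast})(x)=\mathcal{O}_N\big(a^{-3/4}(1+\|D_a^{-1}S_s^{-1}(x-t)\|)^{-N}\big)$. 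Feeding in the hypothesis $|\mathcal{SH}_\psi f(a,s,t)|\le Ca^{5/4+\mcs}(1+\|(t-x_0)/\sqrt a\|^{-2\mcs'})$ and carrying out the $(a,s,t)$ integral with the $a^{-3}\,dt\,ds\,da$ measure --- the exponent $2\mcs'$ in the weight dropping to $\mcs'$ upon the $t$-integration against the rapidly decaying kernel --- yields $|\Delta_jf(x)|\le C2^{-j\mcs}(1+2^j\|x-x_0\|)^{-\mcs'}$, and the same computation bounds $S_0f$; hence $f\in C^{\mcs,\mcs'}_{x_0}(\R^2)$. Finally $f\in C^{\mcs}_{x_0}(\R^2)$ follows from the classical inclusion $C^{\mcs,\mcs'}_{x_0}\subseteq C^{\mcs}_{x_0}$ valid when $\mcs+\mcs'>0$ (Jaffard), or directly from the block estimates by a Taylor--summation argument over $j$.

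The principal difficulty is step (i): obtaining \emph{precisely} the pair (exponent $\tfrac34+\tfrac{\mcs+\lfloor\mcs\rfloor}2$, weight in $\sqrt a$) rather than something weaker. A crude absolute-value estimate only gives the weight $1+\|(t-x_0)/a\|^{-\mcs'}$, while a Taylor expansion of $\Delta_jf$ at the single point $t$ performed over the long scale $\sqrt a$ actually loses powers of $a$; the expansion must be organized so that the short ($\asymp a$) direction of $\psi_{ast}$ is handled at its natural scale and only the fractional part of $\mcs$ is paid in the long ($\asymp\sqrt a$) direction --- which is exactly what the vanishing moments of $\psi$, combined with the parabolic dilation $D_a$, make possible. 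In (ii) the analogous care is needed to match the weight exponents through the $t$-integration and to dispose of the non-reproduced low-frequency part; the rest is bookkeeping of the $a^{-3}\,dt\,ds\,da$ measure.
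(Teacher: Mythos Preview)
Your overall architecture (frequency localization, reproducing formula on cones, reduction of the vertical case to the horizontal) matches the paper. The substantive divergence is in how you handle the regularity, and there your proposal leaves precisely the step you flag as the ``principal difficulty'' unresolved.

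For part (i), you try to Taylor-expand $\Delta_j f$ to order $\lfloor\mcs\rfloor$ and exploit vanishing moments of $\psi$, and you correctly observe that doing this over the long scale $\sqrt a$ loses powers of $a$. Your remedy (``organize the expansion so the short direction is handled at its natural scale'') is only a wish, not a mechanism. The paper bypasses this entirely by two devices you are missing. First, it invokes a \emph{time-domain} characterization: under the hypotheses $\mcs'<0$, $\mcs+\mcs'>0$, $\mcs+\mcs'\notin\N$, one has $C_{x_0}^{\mcs,\mcs'}=K_{x_0}^{\mcs,\mcs'}$, meaning $|\partial^\alpha f(x)-\partial^\alpha f(y)|\le C\|x-y\|^{\mcs+\mcs'-m}(\|x-y\|+\|x-x_0\|)^{-\mcs'}$ for $|\alpha|=m=\lfloor\mcs\rfloor$ (this is the content of a separate lemma in the paper, generalizing Seuret--V\'ehel). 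Second, it transfers the $m$ derivatives to the shearlet in the \emph{horizontal} Fourier variable: setting $\wh{\tilde\psi_1}(\xi):=\wh{\psi_1}(\xi)/(-i2\pi\xi)^m$ yields the exact identity $\mathcal{SH}_\psi f(a,s,t)=a^{m}\,\mathcal{SH}_{\tilde\psi}(\partial_1^m f)(a,s,t)$. The point is that this gains the full factor $a^m$ (not $a^{m/2}$), because the derivative is taken in the direction that scales like $a$; one is then left with a single vanishing moment of $\tilde\psi$ and the $K$-space H\"older estimate of order $\mcs+\mcs'-m\in(0,1)$ over the $\sqrt a$ scale, which produces exactly $a^{3/4+(\mcs-m)/2}$ times the weight in $\sqrt a$. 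Multiplying by $a^m$ gives $a^{3/4+(\mcs+\lfloor\mcs\rfloor)/2}$. Your Littlewood--Paley route does not have a clean substitute for this anisotropic integration-by-parts.

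For part (ii), you aim to verify the Littlewood--Paley definition $|\Delta_j f(x)|\lesssim 2^{-j\mcs}(1+2^j\|x-x_0\|)^{-\mcs'}$ directly, asserting that the hypothesis weight $\|(t-x_0)/\sqrt a\|^{-2\mcs'}$ becomes the target weight $(1+2^j\|x-x_0\|)^{-\mcs'}$ after the $t$-integration. This is not clear: the hypothesis lives at the parabolic scale $\sqrt a\asymp 2^{-j/2}$, whereas the definition asks for the isotropic scale $2^{-j}$, and a kernel integration does not convert one into the other. The paper again avoids this by going through the $K$-space: it takes $m$ derivatives of the cone piece of $f$, builds dyadic blocks $\Delta_j$ from the reproducing formula over $a\in[2^j,2^{j+1}]$, and estimates both $|\Delta_j(x)|$ and $|\Delta_j(x)-\Delta_j(y)|$ to obtain the increment inequality defining $K_{x_0}^{\mcs,\mcs'}$, whence $C_{x_0}^{\mcs,\mcs'}$ by the same equivalence lemma. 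The $-2\mcs'$ in the hypothesis is exactly what survives as the $-\mcs'$ in the $K$-inequality after the $I_x+I_y$/$II_x+II_y$ splitting. In short, the missing idea in both directions is the intermediate space $K_{x_0}^{\mcs,\mcs'}$ and the directional integration-by-parts $\mathcal{SH}_\psi f=a^{\lfloor\mcs\rfloor}\mathcal{SH}_{\tilde\psi}(\partial_1^{\lfloor\mcs\rfloor}f)$; without them the exponents and weight scales do not line up.
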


As an application of Theorem~\ref{thm:2micro},
we connect $2$-microlocal spaces and local H\"older spaces with Sobolev wavefront sets.

\begin{theorem}\label{thm:2microsobolevwavefront}
	Let $x_0\in \mathbb{R}^2$ and  $\mcs,\mcs'\in \R$ with $\mcs+\mcs'>0$ and $\mcs'<0$ such that $\mcs+\mcs'\not\in \N$.
	%Let $f$ be a tempered distribution on $\R^2$.
	%\begin{enumerate} \item[(i)]
	If $f\in C^{\mcs,\mcs'}_{x_0}(\R^2)$, then $(x_0,\xi_0)\not\in \wf_m(f)$ for all $\xi_0\in \R^2 \bs\{0\}$ and for all real numbers $m<\frac{\mcs+\mcs'+\lfloor \mcs\rfloor}{2}-\frac{1}{4}$.
	%\item[(ii)]
	%If $f\in C^{\mcs}_{x_0}(\R^2)$, then $(x_0,\xi_0)\not\in \wf_m(f)$ for all $\xi_0\in \R^2 \bs\{0\}$ and for all real numbers $m<\min\left( \frac{\mcs}{2}-\frac{1}{4},-\frac{1}{2}\right)$.
	%\end{enumerate}
\end{theorem}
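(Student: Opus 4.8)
The plan is to combine Theorem~\ref{thm:2micro}(i) with Theorem~\ref{thm:swf:integral}. Starting from $f\in C^{\mcs,\mcs'}_{x_0}(\R^2)$, part (i) of Theorem~\ref{thm:2micro} gives the pointwise bound
\[
|\mathcal{SH}_{\psi}f(a,s,t)|\le C a^{\frac34+\frac{\mcs+\lfloor\mcs\rfloor}{2}}\Bigl(1+\bigl\|\tfrac{t-x_0}{\sqrt a}\bigr\|^{-\mcs'}\Bigr),
\]
and the same bound for $\mathcal{SH}^{(v)}_\psi f$, valid for all $a\in(0,1)$, $s\in[-2,2]$, $t\in\R^2$. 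Since $\mcs'<0$, the factor $(1+\|(t-x_0)/\sqrt a\|^{-\mcs'})$ is a genuine growth factor in $\|t-x_0\|/\sqrt a$; but we will only integrate $t$ over the bounded disk $B_{r_0}(x_0)$, and on that disk $\|(t-x_0)/\sqrt a\|\le r_0/\sqrt a$, so the parenthesis is bounded by a constant multiple of $(1+r_0^{-\mcs'}a^{\mcs'/2})\le C' a^{\mcs'/2}$ for $a\in(0,1)$ (absorbing constants, using $a^{\mcs'/2}\ge 1$). Hence on the relevant region
\[
|\mathcal{SH}_{\psi}f(a,s,t)|^2\le C'' a^{\frac32+\mcs+\lfloor\mcs\rfloor}\,a^{\mcs'}=C'' a^{\frac32+\mcs+\mcs'+\lfloor\mcs\rfloor},
\]
and similarly for the vertical transform.

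Next I would plug this into the square-integrability criterion of Theorem~\ref{thm:swf:integral}. Fix an arbitrary $\xi_0\in\R^2\bs\{0\}$ with slope $s_0=\tan(\operatorname{Arg}(\xi_0))\in\R\cup\{\infty\}$. If $|s_0|<2$ we test \eqref{squareintegrabilityofshearlet}; if $|s_0|>1/2$ we test \eqref{1squareintegrabilityofshearlet}; every direction falls into at least one of these cases. The $s$-integral over $[s_0-r_0,s_0+r_0]$ and the $t$-integral over $B_{r_0}(x_0)$ each contribute only a finite constant (the integrand bound above is independent of $s$ and $t$), so the triple integral is bounded by a constant times
\[
\int_{0}^{1} a^{\frac32+\mcs+\mcs'+\lfloor\mcs\rfloor}\,a^{-2m-3}\,da
=\int_0^1 a^{\,\mcs+\mcs'+\lfloor\mcs\rfloor-\frac32-2m}\,da,
\]
which converges precisely when the exponent exceeds $-1$, i.e. when $\mcs+\mcs'+\lfloor\mcs\rfloor-\frac32-2m>-1$, equivalently $2m<\mcs+\mcs'+\lfloor\mcs\rfloor-\frac12$, i.e. $m<\frac{\mcs+\mcs'+\lfloor\mcs\rfloor}{2}-\frac14$. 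This is exactly the stated range. When $f$ is not globally a tempered distribution, I replace $f$ by $\varphi f$ with $\varphi\in\DCO$ equal to $1$ near $x_0$, as the statement of Theorem~\ref{thm:swf:integral} allows, noting $\varphi f$ is then a compactly supported distribution to which Theorem~\ref{thm:2micro}(i) applies locally near $x_0$.

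One technical point to handle carefully: Theorem~\ref{thm:swf:integral} is stated for real-valued distributions, and its hypothesis also requires $x_0\in\ssupp(f)$; if $x_0\notin\ssupp(f)$ then $f$ is $C^\infty$ near $x_0$ and $(x_0,\xi_0)\notin\wf_m(f)$ for every $m$ trivially, so I would dispose of that case first. For the real-valued issue I would either invoke the convention (used throughout this section) of working with the real/imaginary parts, or simply note that the shearlet bound in Theorem~\ref{thm:2micro}(i) holds for the complex-valued $f$ and that the square-integrability conclusion \eqref{squareintegrabilityofshearlet}–\eqref{1squareintegrabilityofshearlet} can be checked directly to imply $f\in H^m(x_0,\pm\xi_0)$ by the same argument as in the proof of Theorem~\ref{thm:swf:integral}. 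The main obstacle is not conceptual but bookkeeping: making sure the estimate $(1+\|(t-x_0)/\sqrt a\|^{-\mcs'})\le C a^{\mcs'/2}$ is applied only on the bounded $t$-region (it would be false on all of $\R^2$), and tracking the floor term $\lfloor\mcs\rfloor$ so the final threshold matches $\frac{\mcs+\mcs'+\lfloor\mcs\rfloor}{2}-\frac14$ exactly; the hypothesis $\mcs+\mcs'\notin\N$ is what is inherited from Theorem~\ref{thm:2micro} and is used only there. The endpoint $m=\frac{\mcs+\mcs'+\lfloor\mcs\rfloor}{2}-\frac14$ is correctly excluded since the $a$-integral diverges logarithmically there.
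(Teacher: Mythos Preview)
Your proposal is correct and follows essentially the same approach as the paper: apply the pointwise shearlet bound from Theorem~\ref{thm:2micro}(i), square it, and feed it into the integrability criterion of Theorem~\ref{thm:swf:integral}. The only cosmetic difference is that the paper uses $(1+x)^2\le 2(1+x^2)$ to split the integrand into two terms and integrates the $\|t-x_0\|^{-2\mcs'}$ piece over $B_{r_0}(x_0)$ explicitly, whereas you bound $1+\|(t-x_0)/\sqrt a\|^{-\mcs'}\le C'a^{\mcs'/2}$ uniformly on the disk before integrating; both routes give the same exponent $\mcs+\mcs'+\lfloor\mcs\rfloor-2m-\tfrac32$ in the $a$-integral and hence the identical threshold $m<\tfrac{\mcs+\mcs'+\lfloor\mcs\rfloor}{2}-\tfrac14$.
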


We shall prove Theorems~\ref{thm:2micro} and \ref{thm:2microsobolevwavefront} on $2$-microlocal spaces in
Section~\ref{sec:holdersobolev}.
%To our best knowledge, Theorem~\ref{thm:2microsobolevwavefront} is new.
The continuous shearlet transform employed in
Theorems~\ref{thm:swf:integral} and~\ref{thm:2micro} allows us to  connect $2$-microlocal spaces and local H\"older spaces with Sobolev wavefront sets in Theorem~\ref{thm:2microsobolevwavefront}.

\subsection{Organization}
The structure of the paper is as follows. In Section~\ref{sec:wavefrontset}, we review some basic properties of $C^\infty$ and Sobolev  wavefront sets.
To prove the main results Theorems~\ref{thm:swf:integral} and~\ref{thm:swf:decay},
we shall develop in Section~\ref{CSTHm} the continuous shearlet transform in the Sobolev space  $H^m(\mathbb{R}^2)$ with $m\in\mathbb{R}$,
including tempered distributions.  In particular, some microlocal properties of the continuous shearlet transform are also developed in this section in order to prove our main results in Theorems~\ref{thm:swf:integral} and~\ref{thm:swf:decay}, whose proofs are given in Section~\ref{sec:swf:proof}.
In Section~\ref{sec:holdersobolev}, we shall further study $2$-microlocal spaces and local H\"older spaces. Then we shall prove Theorems~\ref{thm:2micro} and~\ref{thm:2microsobolevwavefront} in Section~\ref{sec:holdersobolev}.
%In \ref{sec:applications}
We complete this paper by providing an example of a distribution with discontinuity along a curve such that its continuous shearlet transform has no rapid decay along some directions.
%we show that the continuous shearlet transform of a distribution, having discontinuity along a curve, has no rapid decay along some directions.
%Finally,  we discuss several applications of shearlets, pseudo-differential operators and  Sobolev wavefront sets in the areas of imaging techniques and deep learning.

\section{Some properties of Sobolev wavefront sets}
\label{sec:wavefrontset}

In this section, we shall discuss some basic properties of the  wavefront set of a distribution. From the definitions of $C^\infty$ and Sobolev wavefront sets in Definitions~\ref{cinfinitywavefront} and~\ref{sobolevwf}, it is trivial that if $(x_0,\xi_0)\in \wf(f)$ (or in $\wf_m(f)$), then $(x_0,r\xi_0)\in \wf(f)$ (or in $\wf_m(f)$) for all $r>0$. The known consistency theory on wavefront sets shows that the definitions for $(x_0,\xi_0)$ outside $\wf(f)$ or $\wf_m(f)$ are independent of the choice of $\varphi\in \DCO$ and $\epsilon>0$ as long as
$\epsilon$ and $\supp(\varphi)$ are sufficiently small with $\varphi=1$ near $x_0$.
As shown in the following known result,
the condition~\eqref{sobolevwavefront} given in Definition~\ref{sobolevwf} for Sobolev wavefront sets can be replaced by a simpler but equivalent condition.

\begin{lemma}\label{lem:swf:def}
	The condition in \eqref{sobolevwavefront} of Definition~\ref{sobolevwf} holds if and only if
	\begin{equation}\label{sobolevwf:2}
		\int_{U_{\xi_0,\epsilon}} |\xi|^{2m} |\wh{\varphi f}(\xi)|^2 d\xi<\infty,
	\end{equation}
	where
	\begin{equation}\label{Wxie}
		U_{\xi_0,\epsilon}:=\{ \xi\in V_{\xi_0,\epsilon} \setsp |\xi|>1\}=
		\{(r\cos\theta,r\sin \theta) \setsp r>1, |\theta-\mbox{Arg}(\xi_0)|<\epsilon\}.
	\end{equation}
\end{lemma}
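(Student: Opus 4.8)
The plan is to decompose the cone $V_{\xi_0,\epsilon}$ into the bounded piece $V_{\xi_0,\epsilon}\cap\{|\xi|\le 1\}$ and the unbounded piece $U_{\xi_0,\epsilon}=V_{\xi_0,\epsilon}\cap\{|\xi|>1\}$, to show that the contribution of the bounded piece to the integral in \eqref{sobolevwavefront} is automatically finite, and then to observe that on the unbounded piece the two weights $(1+|\xi|^2)^m$ and $|\xi|^{2m}$ differ only by a multiplicative constant depending on $m$.

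First I would note that since $\varphi\in\DCO$, the product $\varphi f$ is a compactly supported distribution on $\R^2$, so by the Paley--Wiener--Schwartz theorem $\wh{\varphi f}$ is a $C^\infty$ function on $\R^2$ (of polynomial growth). In particular $\wh{\varphi f}$ is continuous, hence bounded on the compact ball $\overline{B_1(0)}$, say $|\wh{\varphi f}(\xi)|\le M$ there; and $(1+|\xi|^2)^m\le \max(1,2^m)$ for $|\xi|\le 1$. Since $V_{\xi_0,\epsilon}\cap\{|\xi|\le 1\}\subseteq \overline{B_1(0)}$ has finite Lebesgue measure, it follows that
\[
\int_{V_{\xi_0,\epsilon}\cap\{|\xi|\le 1\}} (1+|\xi|^2)^m |\wh{\varphi f}(\xi)|^2\, d\xi \le \max(1,2^m)\, M^2\, |B_1(0)| < \infty,
\]
with no hypothesis on $f$ needed.

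Second, for $|\xi|>1$ one has $|\xi|^2\le 1+|\xi|^2\le 2|\xi|^2$, and raising to the power $m$ (keeping track of the sign of $m$) gives $c_m\,|\xi|^{2m}\le (1+|\xi|^2)^m\le C_m\,|\xi|^{2m}$ with $c_m:=\min(1,2^m)>0$ and $C_m:=\max(1,2^m)$. Hence $\int_{U_{\xi_0,\epsilon}} (1+|\xi|^2)^m |\wh{\varphi f}(\xi)|^2\, d\xi$ and $\int_{U_{\xi_0,\epsilon}} |\xi|^{2m}|\wh{\varphi f}(\xi)|^2\, d\xi$ are simultaneously finite or infinite. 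Writing the integral in \eqref{sobolevwavefront} as the sum of its restrictions to $\{|\xi|\le 1\}$ and to $U_{\xi_0,\epsilon}$, the first summand is always finite by the previous step, so \eqref{sobolevwavefront} holds if and only if the $U_{\xi_0,\epsilon}$-integral of $(1+|\xi|^2)^m|\wh{\varphi f}|^2$ is finite, equivalently (by the weight comparison) if and only if \eqref{sobolevwf:2} holds.

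I do not expect any genuine obstacle: the argument is just a splitting plus a two-sided weight estimate. The only point deserving care is the treatment of the region near $\xi=0$ when $m<0$, where the weight $(1+|\xi|^2)^m$ stays bounded but $|\xi|^{2m}$ blows up; this is precisely why the equivalent condition is stated with the truncated cone $U_{\xi_0,\epsilon}$ rather than all of $V_{\xi_0,\epsilon}$, and why one must invoke the smoothness (hence local boundedness) of $\wh{\varphi f}$ coming from the compact support of $\varphi f$.
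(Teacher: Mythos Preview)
Your proposal is correct and follows essentially the same approach as the paper: split $V_{\xi_0,\epsilon}$ into the bounded piece $\{|\xi|\le 1\}$ and $U_{\xi_0,\epsilon}$, use the compact support of $\varphi f$ to get continuity (hence local boundedness) of $\wh{\varphi f}$ so that the bounded piece contributes a finite integral, and then use the two-sided comparison $\min(1,2^m)|\xi|^{2m}\le (1+|\xi|^2)^m\le \max(1,2^m)|\xi|^{2m}$ for $|\xi|\ge 1$ to conclude. The constants and the reasoning match the paper's argument exactly.
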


\begin{comment}
\begin{proof}
Since $\varphi\in \DCO$ has compact support, $\varphi f$ in \ref{sobolevwf} has compact support. Therefore, the function $\widehat{\varphi f}$ must be continuous (in fact analytic). Therefore,
$C:=\sup_{|\xi|\le 1} |\wh{\varphi f}(\xi)|^2<\infty$. Since $V_{\xi_0,\epsilon}\bs U_{\xi_0,\epsilon}\subseteq \{\xi\in \R^2 \setsp |\xi|\le 1\}$, we have
\[
\int_{V_{\xi_0,\epsilon}\bs U_{\xi_0,\epsilon}} (1+|\xi|^2)^{m} |\widehat{\varphi f}(\xi)|^2 d\xi
\le C \int_{|\xi|\le 1} (1+|\xi|^2)^{m} d\xi<\infty.
\]
On the other hand,
we observe $|\xi|^2\le 1+|\xi|^2\le 2|\xi|^2$ for all $|\xi|\ge 1$. Therefore,
\[
\min(1, 2^m) |\xi|^{2m}\le (1+|\xi|^2)^m \le \max (1, 2^m) |\xi|^{2m},\qquad \forall\, |\xi|\ge 1.
\]
Thus
$\int_{U_{\xi_0,\epsilon}} (1+|\xi|^2)^{m} |\wh{\varphi f}(\xi)|^2 d\xi<\infty$ if and only if \eqref{sobolevwf:2} holds.
\end{proof}
\end{comment}

\begin{lemma}\label{lem:sobolev:wfm}
	Let $f\in H^m(\R^2)$. Then
	\begin{equation}\label{I}
		 \int_{a=0}^{a=1}\int_{s=-2}^{s=2}
		\int_{t\in \R^2} |\mathcal{SH}_\psi f(a,s,t)|^2 a^{-2m-3} dtdsda<\infty
	\end{equation}
	and
	\[
	\int_{a=0}^{a=1}\int_{s=-2}^{s=2}
	\int_{t\in \R^2} |\mathcal{SH}^{(v)}_\psi f(a,s,t)|^2 a^{-2m-3} dtdsda<\infty.
	\]
\end{lemma}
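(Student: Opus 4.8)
The plan is to transfer the computation to the Fourier side, apply Plancherel in the translation variable $t$, and reduce the left-hand side of \eqref{I} to a weighted $L^2$-estimate for $\wh f$. First I would record the Fourier transform of a shearlet atom: since $\wh{\psi_1},\wh{\psi_2}\in\mathcal{D}(\R)$ with $\wh{\psi_1}$ supported away from the origin, the atom $\psi_{ast}$ in \eqref{psiast} is a Schwartz function, so $\mathcal{SH}_\psi f(a,s,t)=\langle f,\psi_{ast}\rangle$ is well defined for $f\in H^m(\R^2)\subset\mathcal{S}'(\R^2)$, and from $\det M_{as}=a^{3/2}$ one gets $\wh{\psi_{ast}}(\xi)=a^{3/4}e^{-2\pi i t\cdot\xi}\wh\psi(M_{as}^{T}\xi)$. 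By Parseval's identity,
\[
\mathcal{SH}_\psi f(a,s,t)=a^{3/4}\int_{\R^2}\wh f(\xi)\,e^{2\pi i t\cdot\xi}\,\overline{\wh{\psi}(M_{as}^{T}\xi)}\,d\xi,
\]
which, since $\wh\psi(M_{as}^{T}\cdot)$ is bounded with compact support and $\wh f\in L^2_{\mathrm{loc}}$, is $a^{3/4}$ times the inverse Fourier transform in $t$ of $\xi\mapsto\wh f(\xi)\overline{\wh\psi(M_{as}^{T}\xi)}$. Plancherel in $t$ then gives, for each fixed $(a,s)$, the identity $\int_{\R^2}|\mathcal{SH}_\psi f(a,s,t)|^2\,dt=a^{3/2}\int_{\R^2}|\wh f(\xi)|^2\,|\wh\psi(M_{as}^{T}\xi)|^2\,d\xi$.

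Next, as all integrands are nonnegative, Tonelli's theorem lets me rewrite the left-hand side of \eqref{I} as $\int_{\R^2}|\wh f(\xi)|^2\,G(\xi)\,d\xi$, where $G(\xi):=\int_0^1\int_{-2}^2 a^{-2m-3/2}\,|\wh\psi(M_{as}^{T}\xi)|^2\,ds\,da$. Using \eqref{psidef} and $M_{as}^{T}\xi=(a\xi_1,\sqrt a(\xi_2-s\xi_1))$, for $\xi_1\ne 0$ one has $|\wh\psi(M_{as}^{T}\xi)|^2=|\wh{\psi_1}(a\xi_1)|^2\,|\wh{\psi_2}(\tfrac{1}{\sqrt{a}}(\tfrac{\xi_2}{\xi_1}-s))|^2$. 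I would do the $s$-integral first via the substitution $v=\tfrac{1}{\sqrt{a}}(\tfrac{\xi_2}{\xi_1}-s)$, bounding it by $\sqrt a\,\|\wh{\psi_2}\|_{L^2}^2=\sqrt a$ thanks to (C2), and then the remaining $a$-integral via $c=a\xi_1$, using that $\wh{\psi_1}$ is bounded with $\supp\wh{\psi_1}\subseteq[-2,-\tfrac{1}{2}]\cup[\tfrac{1}{2},2]$ from (C1). This produces a bound of the shape $G(\xi)\le C_{m,\psi_1}\,|\xi_1|^{2m}$ with a constant depending only on $m$ and $\psi_1$.

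The last step converts this into the Sobolev weight, and this is the only point that genuinely uses the shearlet structure rather than soft estimates: the bound $G(\xi)\lesssim|\xi_1|^{2m}$ by itself is \emph{not} controlled by $(1+|\xi|^2)^m$ when $m<0$ (take $\xi_1$ bounded and $\xi_2\to\infty$), so I must exploit the $(a,s)$-uniform conical frequency localization of the atoms. Concretely, $G(\xi)=0$ unless $|\xi_1|\ge\tfrac{1}{2}$ (otherwise $\wh{\psi_1}(a\xi_1)\equiv 0$ for $a\in(0,1]$) and $|\xi_2|\le 3|\xi_1|$ (otherwise $\tfrac{1}{\sqrt{a}}|\tfrac{\xi_2}{\xi_1}-s|>1$ for all $a\in(0,1]$ and $s\in[-2,2]$, so $\wh{\psi_2}(\cdot)\equiv 0$); on this horizontal cone $|\xi_1|^2$ is comparable to $1+|\xi|^2$, hence $|\xi_1|^{2m}\le C'_m(1+|\xi|^2)^m$. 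Therefore $G(\xi)\le C(1+|\xi|^2)^m$ for all $\xi$, and the left-hand side of \eqref{I} is at most $C\|f\|_{H^m(\R^2)}^2<\infty$. The vertical inequality then follows at once by applying the horizontal one to $f(J\cdot)$, which lies in $H^m(\R^2)$ with the same norm because $J$ is orthogonal, together with the identity $\mathcal{SH}^{(v)}_\psi f(a,s,t)=\mathcal{SH}_\psi(f(J\cdot))(a,s,Jt)$ and the unimodular change of variables $t\mapsto Jt$. I expect the main obstacle to be exactly this bookkeeping in the final step; the rest (Fourier transform of the atom, Plancherel in $t$, and the two one-dimensional substitutions) is routine.
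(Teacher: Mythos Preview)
Your proposal is correct and follows essentially the same route as the paper: Plancherel in $t$ reduces \eqref{I} to $\int|\wh f|^2 G$, the change of variables in $(a,s)$ gives $G(\xi)\lesssim|\xi_1|^{2m}$, and the conical support $\{|\xi_1|\ge\tfrac12,\ |\xi_2|\le 3|\xi_1|\}$ of $G$ converts this into $(1+|\xi|^2)^m$. The only cosmetic differences are that the paper performs the two substitutions jointly and splits into the cases $m\ge 0$ and $m<0$, whereas you do the substitutions sequentially and treat all $m$ at once via the comparability $|\xi_1|^2\asymp 1+|\xi|^2$ on the cone; your reduction of the vertical inequality to the horizontal one via $f\mapsto f(J\cdot)$ is also what the paper's ``similarly'' amounts to.
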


\begin{proof}
	By the definition of $\psi_{ast}$ in \eqref{psiast}, we observe
	\begin{equation*}\label{sh:Fourier}
		\mathcal{SH}_\psi f(a,s,t)
		=\la f, \psi_{ast}\ra
		=\la \wh{f}, \wh{\psi_{ast}}\ra
		 =a^{3/4}(\wh{f}\overline{\wh{\psi}(M_{as}^T\cdot)})^\vee(t).
		%=a^{-3/4} (\wh{f}(M_{as}^{-T}\cdot)\overline{\wh{\psi}})^\vee(M_{as}^{-1} t).
	\end{equation*}
	Let $I$ stand for the integral in \eqref{I}. By Plancherel's identity, we have
	\[
	I=\int_{\xi\in \R^2} |\wh{f}(\xi)|^2
	g(\xi) d\xi
	\quad \mbox{with}\quad
	 g(\xi):=\int_{a=0}^{a=1}\int_{s=-2}^{s=2}
	|\wh{\psi}(M_{as}^T\xi)|^2 a^{-2m-3/2} ds da.
	\]
	We now estimate $g(\xi)$. Note that $M_{as}^T\xi=(a\xi_1, \sqrt{a} \xi_2-s\sqrt{a}\xi_1)$. By the definition of $\wh{\psi}$ in \eqref{psidef},
	\[
	 \wh{\psi}(M_{as}^T\xi)=\wh{\psi_1}(a\xi_1)
	 \wh{\psi_2}(a^{-1/2}(\tfrac{\xi_2}{\xi_1}-s)).
	\]
	For simplicity of presentation, we assume $\xi_1>0$ and $\xi_2>0$.
	Using change of variables $a'=a\xi_1$ and $s'=a^{-1/2}(\tfrac{\xi_2}{\xi_1}-s)$, we deduce that $a=\frac{a'}{\xi_1}$, $s=\frac{\xi_2}{\xi_1}-\sqrt{\frac{a'}{\xi_1}}s'$, and
	\begin{align*}
		g(\xi)&=\int_{a'=0}^{a'=\xi_1}
		\int_{s'=\sqrt{\frac{\xi_1}{a'}}
			(\frac{\xi_2}{\xi_1}-2)}
		^{s'=\sqrt{\frac{\xi_1}{a'}}
			(\frac{\xi_2}{\xi_1}+2)}
		 |\wh{\psi_1}(a')\wh{\psi_2}(s')|^2
		 \left(\frac{\xi_1}{a'}\right)^{2m+3/2}
		\frac{(a')^{1/2}}{\xi_1^{3/2}}
		da' ds'\\
		&=|\xi_1|^{2m}
		\int_{a'=0}^{a'=\xi_1}
		\int_{s'=\sqrt{\frac{\xi_1}{a'}}
			(\frac{\xi_2}{\xi_1}-2)}
		^{s'=\sqrt{\frac{\xi_1}{a'}}
			(\frac{\xi_2}{\xi_1}+2)}
		 |\wh{\psi_1}(a')\wh{\psi_2}(s')|^2
		(a')^{-2m-1}da' ds'.
	\end{align*}
	If $m\ge 0$, since $\supp(\wh{\psi_1})\subseteq [-2,-\frac{1}{2}]\cup [\frac{1}{2},2]$, we have
	\[
	g(\xi)\le
	|\xi_1|^{2m}
	\int_{a'=1/2}^{a'=2}
	\int_{s'\in \R}
	|\wh{\psi_1}(a')\wh{\psi_2}(s')|^2
	(a')^{-2m-1}da' ds'
	\le C_m(1+\|\xi\|^2)^{m},
	\]
	where $C_m:=\|\wh{\psi_2}\|_{L^2}^2 \int_{a'=1/2}^{a'=2} \frac{|\wh{\psi_1}(a')|^2}{(a')^{2m+1}} da'<\infty$.
	
	If $m<0$, since $\supp(\wh{\psi_1})\subseteq [-2,-\frac{1}{2}]\cup [\frac{1}{2},2]$, we also have
	\[
	g(\xi)
	\le \|\wh{\psi_2}\|_{L^2}^2
	|\xi_1|^{2m}
	\int_{a'=1/2}^{a'=\xi_1} |\wh{\psi_1}(a')|^2 (a')^{-2m-1} da'\le C |\xi_1|^{2m} \chi_{[1/2,\infty)}(|\xi_1|),
	\]
	where $C:=\|\wh{\psi_2}\|_{L^2}^2
	\int_{a'=1/2}^{a'=2}
	|\wh{\psi_1}(a')|^2 (a')^{-2m-1} da'<\infty$.
	Note that $\supp(\wh{\psi}(M_{as}^T\cdot))\subseteq \Xi(a,s)$, where
	\begin{equation}\label{xias}
		\Xi(a,s):=\left\{(\xi_1, \xi_2) \setsp |\xi_1| \in \left[\tfrac{1}{2a},\tfrac{2}{a}\right], \left|\tfrac{\xi_2}{\xi_1}-s\right|\leq \sqrt{a}\right\}.
	\end{equation}
	Hence, for $|\xi_2/\xi_1|>3$, we trivially have $g(\xi)=0$. Hence, for $|\xi_2/\xi_1|\le 3$ and $m<0$, we conclude that
	\[
	g(\xi)\le C |\xi_1|^{2m} \chi_E(\xi)\le C_m (1+\|\xi\|^2)^{m},
	\]
	where $E:=\{(x_1,x_2) \setsp |\xi_1|\ge 1/2, |\xi_2/\xi_1|\le 3\}$ and
%\textcolor{red}{$C_m:=C \sup_{\xi\in E} \frac{1+\|\xi\|^2}{|\xi_1|^2}\le 14C$}
{$C_m:=C \sup_{\xi\in E} (\frac{1+\|\xi\|^2}{|\xi_1|^2})^{m}\le 14^{m}C$}.
	Now we conclude that
	\[
	I\le \int_{\xi\in \R^2} |\wh{f}(\xi)|^2 g(\xi) d\xi
	\le C_m \int_{\R^2} |\wh{f}(\xi)|^2(1+\|\xi\|^2)^m d\xi<\infty.
	\]
	This completes the proof of \eqref{I}. The second inequality can be proved similarly.
\end{proof}

We provide two examples to illustrate $C^\infty$ and Sobolev wavefront sets, and their characterization in Theorems~\ref{thm:swf:integral} and~\ref{thm:swf:decay} using the continuous shearlet transform.

\begin{example}\label{example 1}
	{\rm
		Let $\dirac$ be the Dirac delta distribution:
		$\dirac(\varphi)=\varphi(0)$ for $\varphi\in \mathcal{D}(\R^2)$.
		For $\varphi \in \mathcal{D}(\R^2)$, we have
		$\varphi \dirac=\varphi(0) \dirac$ and hence
		$\widehat{\varphi \dirac}(\xi)=\varphi(0)$ for all $\xi \in \R^2$. Consequently, it is trivial to conclude that $\supp(\dirac)=\ssupp(\dirac)=\{0\}$ and $\wf(\dirac)=\{0\}\times (\R^2 \bs \{0\})$.
		Consider the direction $\xi_0=(\cos\theta_0,\sin \theta_0)$ with $\theta_0\in \R$ and arbitrarily small $\epsilon>0$.
		For $\varphi\in \mathcal{D}(\R^2)$ such that $\varphi=1$ in a neighborhood of $0$,
		noting $\widehat{\varphi \dirac}=\varphi(0)=1$ and using the polar coordinates, we have
		\[
		\int_{U_{\xi_0,\epsilon}} (1+|\xi|^2)^m |\widehat{\varphi \dirac}(\xi)|^2 d\xi=
		 \int_{\theta_0-\epsilon}^{\theta_0+\epsilon}
		\int_1^\infty (1+r^2)^m r drd\theta=
		2\epsilon \int_1^\infty (1+r^2)^m r dr,
		\]
		which is finite if and only if $m<-1$.
		Thus, $\wf_m(\dirac)=\emptyset$ for $m<-1$, and  $\wf_m(\dirac)=\{0\}\times (\R^2\bs\{0\})$ for $m\ge -1$. This is not surprising since $\dirac \in H^m(\R^2)$ for all $m<-1$.
		
		We now apply Theorem~\ref{thm:swf:integral} to calculate $\wf_m(\delta)$ instead.
		By definition, we have
		\[
		 \mathcal{SH}_{\psi}\delta(a,s,t)=\langle \delta, \psi_{ast}\rangle=\overline{\psi_{ast}(0)}
		%=a^{-3/4}\overline{\psi(-a^{-1} (t_1+st_2), -a^{-1/2} t_2)},
		 =a^{-3/4}\overline{\psi(-M_{as}^{-1}t)}.
		\]
		Let $\xi_0\in \R^2 \backslash\{0\}$ and $s_0:=\tan(\mbox{Arg}(\xi_0))$.
		For $r_0>0$ and %\textcolor{red}{$|s_0|<\infty$}
$|s_0|<2$, we have
		\begin{align*}
			\int_{0}^{1} \int_{s_0-r_0}^{s_0+r_0} &\int_{B_{r_0}(0)}
			 |\mathcal{SH}_{\psi}\delta(a,s,t)|^2 a^{-2m-3} dt ds da\\
			&=\int_{0}^{1} \int_{s_0-r_0}^{s_0+r_0} \int_{B_{r_0}(0)} |\psi(-M_{as}^{-1}t)|^2 a^{-2m-9/2} dtdsda
			=\int_{0}^{1}
			a^{-2m-3} \eta(a) da
		\end{align*}
		with
		 $\eta(a):=\int_{s_0-r_0}^{s_0+r_0} \int_{-M_{as}^{-1} B_{r_0}(0)}
		|\psi(t)|^2 dt ds$. It is noted that $ |\psi(-M_{as}^{-1}t)|\sim O(a^N)$ for all $N\in\N$ and $t\notin B_{r_0}(0)$ since $\psi(t)$ decays rapidly as $|t|\rightarrow \infty$. Hence the above integral is always finite for any $m$. Since $\psi$ is a Schwartz function, we have $\eta(a) \le 2r_0 \|\psi\|_{L^2}^2$ for all $a\in (0,1)$. For $m<-1$, we have
		\[
		\int_{0}^{1}
		a^{-2m-3} \eta(a) da \le 2r_0 \|\psi\|_{L^2}^2  \int_0^1 a^{-2m-3} da=\frac{r_0 \|\psi\|_{L^2}^2}{2(-m-1)}<\infty.
		\]
		The case  $|s_0|>1/2$ can be handled similarly for the vertical shearlets. By Theorem~\ref{thm:swf:integral}, $(0, \pm \xi_0)\not\in \wf_m(\delta)$ for all $m<-1$.
		Hence, $\wf_m(\delta)=\emptyset$ for $m<-1$.
		For $m\ge -1$, we observe that $\lim_{a\to 0^+} \eta(a)
		=\int_{s_0-r_0}^{s_0+r_0} \int_{\R^2}
		|\psi(t)|^2 dtds=2r_0\|\psi\|_{L^2}^2
		>0$. Since
		$\int_0^{\epsilon} a^{-2m-3} da=\infty$ for all $m\ge -1$ and $0<\epsilon\le 1$, we conclude that for $m\ge -1$,
		\[
		\int_{0}^{1} \int_{s_0-r_0}^{s_0+r_0} \int_{B_{r_0}(0)}
		 |\mathcal{SH}_{\psi}\delta(a,s,t)|^2 a^{-2m-3} dt ds da=\infty.
		\]
		By Theorem~\ref{thm:swf:integral}, $(0, \pm \xi_0) \in \wf_m(\delta)$ for all $m\ge -1$. Therefore,
		$\wf_m(\dirac)=\{0\}\times (\R^2\bs\{0\})$ for $m\ge -1$. Hence, we obtained the same conclusion using both the definition of Sobolev wavefront sets in Definition~\ref{sobolevwf} and Theorem~\ref{thm:swf:integral}.
		By Theorem~\ref{thm:swf:decay}, we also conclude that $\wf(\dirac)=\{0\}\times (\R^2\bs\{0\})$.
}\end{example}

To present the next example, we need an auxiliary result.

\begin{lemma}\label{lem:g}
	Let $g\in L^2(\R)$ with $\|g\|_{L^2}>0$ and $\theta_0\in \R$ such that $\cos \theta_0=0$. Then there exists a positive constant $C$ such that
	\begin{equation}\label{est:g}
		\frac{C^{-1}}{r}\le \int_{\theta_0-\epsilon}^{\theta_0+\epsilon}
		|\wh{g}(r\cos\theta)|^2 d\theta
		\le \frac{C}{r}
	\end{equation}
	for all sufficiently large $r$ and for all $0<\epsilon\le \pi/4$.
\end{lemma}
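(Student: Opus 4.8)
The plan is to reduce the angular integral to an integral over a symmetric interval on the frequency line by an explicit change of variables, and then to control it from above by Plancherel's theorem and from below by the tail behavior of $\|\wh g\|_{L^2}$.

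First, since $\cos$ is $2\pi$-periodic and $\cos\theta_0=0$ forces $\theta_0\in\{\pi/2+k\pi\setsp k\in\Z\}$, after translating $\theta_0$ by a multiple of $2\pi$ I may assume $\theta_0\in\{\pi/2,3\pi/2\}$; in either case $\cos\theta=\pm\sin u$ for $\theta=\theta_0+u$, and the ensuing argument only uses $|\cos\theta|=|\sin u|$ together with the fact that the $v$-integral below runs over a symmetric interval, so it suffices to treat $\theta_0=\pi/2$, where $\cos\theta=-\sin u$ and
\[
\int_{\theta_0-\epsilon}^{\theta_0+\epsilon}|\wh g(r\cos\theta)|^2\,d\theta=\int_{-\epsilon}^{\epsilon}|\wh g(-r\sin u)|^2\,du.
\]
Because $0<\epsilon\le\pi/4$, on $u\in[-\epsilon,\epsilon]$ the map $u\mapsto v:=-r\sin u$ is a strictly decreasing diffeomorphism onto $[-r\sin\epsilon,r\sin\epsilon]$ with $\frac{dv}{du}=-r\cos u$ and $\cos u\in[\tfrac1{\sqrt2},1]$. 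Substituting $v=-r\sin u$ (so that $\cos u=\sqrt{1-v^2/r^2}\in[\tfrac1{\sqrt2},1]$) yields the two-sided bound
\[
\frac1r\int_{-r\sin\epsilon}^{r\sin\epsilon}|\wh g(v)|^2\,dv\;\le\;\int_{-\epsilon}^{\epsilon}|\wh g(-r\sin u)|^2\,du\;\le\;\frac{\sqrt2}{r}\int_{-r\sin\epsilon}^{r\sin\epsilon}|\wh g(v)|^2\,dv.
\]

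For the upper estimate I would simply bound $\int_{-r\sin\epsilon}^{r\sin\epsilon}|\wh g(v)|^2\,dv\le\|\wh g\|_{L^2(\R)}^2=\|g\|_{L^2(\R)}^2$ by Plancherel, giving the right-hand inequality in \eqref{est:g} with constant $\sqrt2\,\|g\|_{L^2}^2$, valid for \emph{every} $r>0$ and every $0<\epsilon\le\pi/4$. For the lower estimate I would use $\|g\|_{L^2}>0$: since $\int_{-R}^{R}|\wh g(v)|^2\,dv$ increases to $\|g\|_{L^2}^2$ as $R\to\infty$, for each fixed $\epsilon$ there is $R_\epsilon>0$ such that $\int_{-r\sin\epsilon}^{r\sin\epsilon}|\wh g(v)|^2\,dv\ge\tfrac12\|g\|_{L^2}^2$ whenever $r\ge R_\epsilon$ (note $r\sin\epsilon\to\infty$), which gives the left-hand inequality in \eqref{est:g} with constant $2/\|g\|_{L^2}^2$. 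Taking $C:=\max\{\sqrt2\,\|g\|_{L^2}^2,\ 2/\|g\|_{L^2}^2\}$ finishes the proof.

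The only point that requires care is the order of quantifiers in the lower bound: the threshold $R_\epsilon$ on $r$ genuinely depends on $\epsilon$ (for a fixed $r$, letting $\epsilon\to0^+$ sends the angular integral to $0$), which is precisely what "for all sufficiently large $r$" permits. Everything else — the reduction to $\theta_0=\pi/2$, the elementary Jacobian estimate $\cos u\in[1/\sqrt2,1]$, and the application of Plancherel — is routine, so I do not anticipate any substantial obstacle.
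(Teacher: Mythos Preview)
Your proposal is correct and follows essentially the same route as the paper: a change of variables sending the angular integral to an integral of $|\wh g|^2$ over an interval of length $\asymp r$, the Jacobian bound $|\sin\theta|\in[1/\sqrt2,1]$ (equivalently your $\cos u\in[1/\sqrt2,1]$), Plancherel for the upper bound, and the monotone exhaustion $\int_{-R}^R|\wh g|^2\to\|g\|_{L^2}^2$ for the lower bound. Your explicit reduction to $\theta_0=\pi/2$ and your remark that the threshold on $r$ necessarily depends on $\epsilon$ are both cleaner than the paper's presentation, which leaves these points implicit.
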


\begin{proof}
	Using the substitution $u=r\cos\theta$, we have $du=-r\sin \theta d\theta$ and $\theta(u)=\arccos (u/r)$.
	Note that the interval $[\theta_0-\epsilon,\theta_0+\epsilon]$ is mapped into the interval $I_{r,\theta_0}$ which is the line segment connecting the points $r\cos(\theta_0-\epsilon)$ and $r\cos(\theta_0+\epsilon)$.
	Hence,
	\[
	 \int_{\theta_0-\epsilon}^{\theta_0+\epsilon}
	|\widehat{g}(r\cos\theta)|^2  d\theta
	=\int_{I_{r,\theta_0}}
	\frac{|\wh{g}(u)|^2}{r|\sin \theta(u)|}du.
	\]
	Noting that $\inf_{|\theta-\theta_0|\le \epsilon}|\sin \theta|\ge 2^{-1/2}$ due to $\cos\theta_0=0$ and $0<\epsilon\le \pi/4$, we have
	\begin{equation*}\label{est:extra}
		\frac{1}{r} \int_{I_{r,\theta_0}} |\wh{g}(u)|^2 du \le
		 \int_{\theta_0-\epsilon}^{\theta_0+\epsilon}
		|\widehat{g}(r\cos\theta)|^2  d\theta
		\le \frac{\sqrt{2}}{r} \int_{I_{r,\theta_0}} |\wh{g}(u)|^2 du.
	\end{equation*}
	Noting that $\lim_{r\to \infty} \int_{I_{r,\theta_0}} |\wh{g}(u)|^2 du=
	\int_{\R} |\wh{g}(u)|^2 du>0$, we conclude that there exists a positive constant $C$ such that \eqref{est:g} holds for all sufficiently large $r$.
\end{proof}

\begin{example}\label{example 2}
	{\rm Let $f:=\chi_{[0,1]^2}$ be the characteristic function of the unit square $[0,1]^2$ in $\R^2$.
		Then $\supp(f)=[0,1]^2$ and $\ssupp(f)=\partial [0,1]^2:=(\{0,1\}\times [0,1])\cup ([0,1]\times \{0,1\})$, the boundary of $[0,1]^2$.
		Due to symmetry, we only consider $(\tau,0)$ with $0\le \tau<1$. Define $c:=\min(\tau,1-\tau)/4>0$ for $0<\tau<1$, and $c:=1/8$ for $\tau=0$.
		Take
		$$
		\varphi_1\in \mathcal{D}(\R),\quad
		\varphi_1=1\quad \mbox{on}\quad (-c,c),\quad \mbox{and}
		\quad \supp(\varphi_1)\subseteq (-2c,2c).
		$$
		Now we choose
		 $\varphi(x,y)=\varphi_1(x-\tau)\varphi_1(y)$.
		%such that
		%\begin{enumerate}
		%\item[(1)] $\varphi_1, \varphi_2\in \mathcal{D}(\R)$ and $\varphi=1$ on $(t-\epsilon/2,t+\epsilon/2)\times (-\epsilon/2,\epsilon/2)$.
		%\item[(2)] $\supp(\varphi_1)\subseteq (t-\epsilon,t+\epsilon)$ ($\subseteq (0,1)$ if $t>0$) and $\supp(\varphi_2)\subseteq (-\epsilon,\epsilon)$.
		%\end{enumerate}
		We first consider $0<\tau<1$.
		Then $(\varphi f)(x,y)=\varphi_1(x-\tau) \varphi_2(y)$ with $\varphi_2:=\varphi_1\chi_{[0,\infty)}$.
		Since $\varphi_1\in \mathcal{D}(\R)$, $\widehat{\varphi_1}$ has fast decay. On the other hand, noting $\varphi_2=1$ on $[0,c)$ and using integration by parts,
		we observe
		 $\widehat{\varphi_2}(\xi)=\frac{1+\widehat{\varphi_2'}(\xi)}{i\xi}$.
		Since $\varphi_2'\in \mathcal{D}(\R)$,  $\widehat{\varphi_2'}$ must have fast decay
		with $\widehat{\varphi_2'}(0)=\int_{\R} \varphi_2'(x) dx=-1$. Hence, there exists a positive constant $C>0$ such that $|\widehat{\varphi_2}(\xi)|^2\le \frac{2C}{1+|\xi|^2}$ for all $\xi\in \R$ and
		$|\widehat{\varphi_2}(\xi)|^2\ge \frac{C}{1+|\xi|^2}$ for sufficiently large $|\xi|$. Consider $\xi_0=(\cos \theta_0, \sin \theta_0)$ with $\theta_0\in \R$.
		Using the polar coordinates and the behavior of $\widehat{\varphi_2}(\xi)$ at infinity,
		we see that the integral
		$\int_{U_{\xi_0,\epsilon}} (1+|\xi|^2)^m |\widehat{\varphi_1}(\xi_1)|^2 |\widehat{\varphi_2}(\xi_2)|^2 d\xi_1d\xi_2$
		is finite if and only if
		 \begin{equation}\label{wf:square}
			\int_1^\infty
			 \int_{\theta_0-\epsilon}^{\theta_0+\epsilon}
			(1+r^2)^m r \frac{|\widehat{\varphi_1}(r\cos\theta)|^2}
			{1+r^2 \sin^2 \theta} d\theta dr
		\end{equation}
		is finite. We first consider $\cos \theta_0\ne 0$. We can choose $\epsilon>0$ small enough so that $\inf_{|\theta-\theta_0|\le \epsilon} |\cos \theta|>0$.
		Since $\varphi_1$ is a Schwartz function, for every $k\in \N$, there exists a positive constant $C_k$ such that $|\wh{\varphi_1}(r\cos\theta)|\le C_k (1+r^2)^{-k}$ for all $r\ge 1$ and $|\theta-\theta_0|\le \epsilon$.
		The above integral in \eqref{wf:square} must be finite for all $m\in \R$ by choosing $k \ge m+1$.
		Now we consider $\cos \theta_0=0$. Choose $0<\epsilon<\pi/4$ and we have
		\begin{equation}\label{est:sin}
			1+\tfrac{1}{2}r^2\le 1+r^2\sin^2\theta\le 1+r^2,\qquad \forall\; r>0,\, \theta\in [\theta_0-\epsilon, \theta_0+\epsilon].
		\end{equation}
		Hence, the above integral in \eqref{wf:square} is finite if and only if
		 \begin{equation}\label{wf:square1}
			\int_1^\infty
			(1+r^2)^{m-1} r
			 \left(\int_{\theta_0-\epsilon}^{\theta_0+\epsilon}
			 |\widehat{\varphi_1}(r\cos\theta)|^2  d\theta\right) dr<\infty.
		\end{equation}
		Since $\varphi_1\in L^2(\R)$ with $\|\varphi_1\|_{L^2}>0$, by Lemma~\ref{lem:g},
		the integral in \eqref{wf:square1} is finite if and only if $\int_1^\infty
		(1+r^2)^{m-1} dr<\infty$, which holds if and only if $m<1/2$.
		
		At the corner point $(0,0)$ with $\tau=0$,
		we have $(\varphi f)(x,y)=\varphi_2(x) \varphi_2(y)$ and similar analysis can be performed by replacing $\varphi_1$ with $\varphi_2$ in \eqref{wf:square}.
		Hence, \eqref{wf:square} becomes
		\[
		\int_1^\infty \int_{\theta_0-\epsilon}^{\theta_0+\epsilon} \frac{(1+r^2)^m r}
		{(1+r^2 \cos^2\theta)(1+r^2 \sin^2 \theta)} d\theta dr.
		\]
		If $\cos\theta_0\ne 0$ and $\sin \theta_0\ne 0$,
		then the above integral is finite if and only if
		$\int_1^r (1+r^2)^{m-2} r dr<\infty$, which holds if and only if $m<1$.
		If either $\cos\theta_0=0$ or $\sin \theta_0=0$,
		then we now claim that the above integral is finite if and only if $m<1/2$. Consider $\cos\theta_0=0$.
		Then \eqref{est:sin} holds for $0<\epsilon<\pi/4$. By Lemma~\ref{lem:g}, \eqref{est:g} holds with $\wh{g}(\xi):=\frac{1}{1+|\xi|^2}$.
		So, for $\cos\theta_0=0$, the above integral is finite if and only if
		$\int_1^\infty (1+r^2)^{m-1} dr<\infty$, which holds if and only if $m<1/2$. The case for $\sin \theta_0=0$ is similar.
		Hence,
		$\wf_m(f)=\emptyset$ for all $m<1/2$.
		For $1/2\le m<1$, $\wf_m(f)=S_1 \cup S_2$ with
		\begin{align*}
			%&S_0:=\{(0,0), (1,0), (0,1),(1,1)\} \times \{ (r\cos \theta_0,r \sin \theta_0) \setsp
			%r>0, \theta_0=0, \pm \pi/2, \pi\},\\
			&S_1:=\left\{(x_0, (0,r)) \setsp r\in \R\bs\{0\},
			x_0\in [0,1]\times \{0,1\}\right\},\\
			&S_2:=\left\{(x_0, (r,0)) \setsp r\in \R\bs\{0\},
			x_0\in \{0,1\}\times [0,1]\right\}.
			%[\{0,1\}\times [0,1]]\times \{ (\pm r,0) \setsp r\in \R\bs \{0\}\}.
			%&S_3:=\{(0,0), (1,0), (0,1),(1,1)\} \times \{ (r\cos \theta_0, r\sin \theta_0) \setsp
			%r>0, \theta_0\in \{0,\pm \tfrac{\pi}{2},\pi\}\}.
		\end{align*}
		For $m \ge 1$, $\wf_m(f)=S_0\cup S_1 \cup S_2$, where
		{\small \[
			S_0:=\{(0,0), (1,0), (0,1),(1,1)\} \times \{ (r\cos \theta_0, r\sin \theta_0) \setsp
			r>0, \theta_0\in (-\pi,\pi]\bs \{0,\pm \tfrac{\pi}{2},\pi\}\}.
			\]}
		
		We now apply Theorem~\ref{thm:swf:integral} to estimate the wavefront set $\wf_m(f)$.
		By the definition of $\psi_{ast}$ in \eqref{psiast} and change of variables formula,
		we observe
		$$
		\mathcal{SH}_\psi f(a,s,t)
		=\la f, \psi_{ast}\ra
		=\la \wh{f}, \wh{\psi_{ast}}\ra
		 =a^{3/4}(\wh{f}\overline{\wh{\psi}(M_{as}^T\cdot)})^\vee(t).
		%=a^{-3/4} (\wh{f}(M_{as}^{-T}\cdot)\overline{\wh{\psi}})^\vee(M_{as}^{-1} t).
		$$
		Therefore, we have
		\begin{small}
			$$
			I:=\int_{a=0}^{a=1}
			 \int_{s=s_0-r_0}^{s=s_0+r_0}\int_{ t\in B_{r_0}(x_0)}
			 \left|\mathcal{SH}_{\psi}f(a,s,t)\right|^2
			a^{-2m-3}dt ds da=\int_{a=0}^{a=1} a^{-2m-3} \eta(a) da,
			$$
		\end{small}
		where
		$$
		\eta(a):=
		 %\int_{s=s_0-r_0}^{s=s_0+r_0}\int_{ t\in B_{r_0}(x_0)}
		 %\left|\mathcal{SH}_{\psi}f(a,s,t)\right|^2 dt ds=
		 \int_{s=s_0-r_0}^{s=s_0+r_0}\int_{ t\in M_{as}^{-T}B_{r_0}(x_0)} |(\wh{f}(M_{as}^{-T}\cdot)\overline{\wh{\psi}})^\vee(t)|^2 dt ds.
		$$
		It is enough to show the convergence of the integral $\int_{a=0}^{a=r_0^2/4} a^{-2m-3} \eta(a) da$ since for all $m\in \R$ we have $\int_{a=r_0^2/4}^{a=1} a^{-2m-3} \eta(a) da<\infty$.
		%
		%\begin{small}
		%\begin{align*}
		 %\eta(a)&=\int_{s=s_0-r_0}^{s=s_0+r_0}
		%\int_{t\in M_{as}^{-1} B_{r_0}(x_0)}
		 %|(\wh{f}(M_{as}^{-T}\cdot)\overline{\wh{\psi}})^\vee(t)|^2 dt ds\\
		%&\le \int_{s=s_0-r_0}^{s=s_0+r_0} %\int_{t\in \R^2}
		 %|(\wh{f}(M_{as}^{-T}\cdot)\overline{\wh{\psi}})^\vee(t)|^2 dt ds\\
		%&=\int_{s=s_0-r_0}^{s=s_0+r_0} %\int_{\xi\in \R^2}
		 %|\wh{f}(M_{as}^{-T}\xi)\overline{\wh{\psi}(\xi)}|^2 d\xi ds\\
		%&\le \|\wh{\psi}\|_{L^\infty}^2 \int_{s=s_0-r_0}^{s=s_0+r_0}
		%\int_{\supp(\wh{\psi})} |\wh{f}(M_{as}^{-T}\xi)|^2 d\xi ds\\
		 %&=a^{3/2}\|\wh{\psi}\|_{L^\infty}^2\int_{s=s_0-r_0}^{s=s_0+r_0}
		%\int_{\Xi(a,s)} |\wh{f}(\xi)|^2 d\xi ds=\|\wh{\psi}\|_{L^\infty}^2\tilde{\eta}(a)\ (\textrm{let}).
		%\end{align*}
		%\end{small}
		%
		Note that $\wh{f}(\xi_1,\xi_2)=\frac{\sin(\pi \xi_1)}{\pi \xi_1}
		\frac{\sin(\pi \xi_2)}{\pi \xi_2}$. Due to symmetry, we only consider points $x_0=(0,\tau)\in \ssupp(f)$ with $0\le \tau<1$. Let us first consider $s_0\ne 0$ and $s_0\ne \infty$.
		Using Plancherel's identity, we deduce that
		%\begin{small}
		%\begin{align*}
		%\eta(a)&\le \int_{s=s_0-r_0}^{s=s_0+r_0} \int_{t\in \R^2}
		 %|(\wh{f}(M_{as}^{-T}\cdot)\overline{\wh{\psi}})^\vee(t)|^2 dt ds\\
		%&=\int_{s=s_0-r_0}^{s=s_0+r_0} \int_{\xi\in \R^2}
		 %|\wh{f}(M_{as}^{-T}\xi)\overline{\wh{\psi}(\xi)}|^2 d\xi ds\\
		%&\le \|\wh{\psi}\|_{L^\infty}^2 \int_{s=s_0-r_0}^{s=s_0+r_0}
		%\int_{\supp(\wh{\psi})} |\wh{f}(M_{as}^{-T}\xi)|^2 d\xi ds\\
		 %&=a^{3/2}\|\wh{\psi}\|_{L^\infty}^2\int_{s=s_0-r_0}^{s=s_0+r_0}
		%\int_{\Xi(a,s)} |\wh{f}(\xi)|^2 d\xi ds.
		%\end{align*}
		%\end{small}
		%
		\begin{small}
			\begin{align*}
				&\eta(a)\le \int_{s=s_0-r_0}^{s=s_0+r_0} \int_{t\in \R^2}
				 |(\wh{f}(M_{as}^{-T}\cdot)\overline{\wh{\psi}})^\vee(t)|^2 dt ds=\int_{s=s_0-r_0}^{s=s_0+r_0} \int_{\xi\in \R^2}
				 |\wh{f}(M_{as}^{-T}\xi)\overline{\wh{\psi}(\xi)}|^2 d\xi ds\\
				&\le \|\wh{\psi}\|_{L^\infty}^2 \int_{s=s_0-r_0}^{s=s_0+r_0}
				\int_{\supp(\wh{\psi})} |\wh{f}(M_{as}^{-T}\xi)|^2 d\xi ds=a^{3/2}\|\wh{\psi}\|_{L^\infty}^2\int_{s=s_0-r_0}^{s=s_0+r_0}
				\int_{\Xi(a,s)} |\wh{f}(\xi)|^2 d\xi ds.
			\end{align*}
		\end{small}
		Since $|s_0|>0$, for $0<r_0<|s_0|/2$, we have the following estimate
		\[
		\eta(a)
		\leq \|\wh{\psi}\|_{L^\infty}^2 \frac{4a^{7/2}}{\pi^2}\int_{s=s_0-r_0}^{s=s_0+r_0}\int_{\tfrac{1}{2a}<|\xi_1|<\tfrac{2}{a}}\int_{\left|\tfrac{\xi_2}{\xi_1}-s\right|<\sqrt{a}}\left|\frac{\sin \pi\xi_2}{\pi\xi_2}\right|^2d\xi_2d\xi_1ds.
		\]
		If $s_0>0$, then $s_0-r_0-\sqrt{a}>\tfrac{r_0}{2}$ and if $s_0<0$ then $s_0+r_0+\sqrt{a}<-\tfrac{r_0}{2}$ provided $\sqrt{a}<\tfrac{r_0}{2}$. So $\xi_2\neq 0$ throughout the domain.
		Note that the set $\{(\xi_1,\xi_2)\in \R^2 \setsp \frac{1}{2a}<|\xi_1|<\frac{2}{a}, |\frac{\xi_2}{\xi_1}-s|<\sqrt{a}\}$ consists of two trapezoids with height $\frac{3}{2}a^{-1}$ and two bases of lengths $a^{-1/2}$ and $4a^{-1/2}$. Hence, this trapezoid has area $\frac{15}{4} a^{-3/2}$.
		Therefore, for $0<r_0<|s_0|/2$,
		$$
		\eta(a)\leq \|\wh{\psi}\|_{L^\infty}^2\tfrac{16a^{11/2}}{\pi^4(s_0-r_0-\sqrt{a})^2}\int_{s=s_0-r_0}^{s=s_0+r_0}\int_{\tfrac{1}{2a}<|\xi_1|<\tfrac{2}{a}}\int_{\left|\tfrac{\xi_2}{\xi_1}-s\right|<\sqrt{a}}d\xi_2d\xi_1ds\leq Ca^4,
		$$
		where
$C:=2^6 \|\wh{\psi}\|_{L^\infty}^2
		\frac{15}{\pi^4 r_0}$
		by noting that the value of the above
		triple integral is $15 r_0 a^{-3/2}$.
		Hence, $I\leq C\int_{0}^{r_0^2/4}
		a^{-2m+1} da <\infty$ for all $m<1$. The case $|s_0|>1/2$ can be handled similarly. By Theorem~\ref{thm:swf:integral},
		we conclude that $\left((0,\tau),(r\cos \theta_0,r\sin \theta_0)\right)\notin \wf_m(f)$
		with  $r>0$ and $\theta_0 \in (-\pi,\pi]\bs\{0, \pm \frac{\pi}{2},\pi\}$ for all $0\le \tau<1$ and  $m<1$. Note that the angles $\theta_0\in \{0, \pm \frac{\pi}{2}, \pi\}$ correspond to $s_0=0$ or $s_0=\infty$.
		
		We now consider $m\ge 1$. Note that
		{\small
			\begin{align*}
				 &\mathcal{SH}_{\psi}f(a,s,t)=\la \wh{f},\wh{\psi_{ast}} \ra\\
				&=a^{\frac{3}{4}}\int_{ \Xi(a,s)}\frac{\textrm{sin}\ \pi \xi_1}{\pi\xi_1}\frac{\textrm{sin}\ \pi \xi_2}{\pi\xi_2}e^{-\pi i (\xi_1+\xi_2)}\wh{\psi_1}(a\xi_1)\wh{\psi_2}\left(a^{-\tfrac{1}{2}}\left(\tfrac{\xi_2}{\xi_1}-s\right)\right)e^{2\pi i (\xi_1,\xi_2)\cdot t}d\xi_1d\xi_2\\
				 &=a^{\frac{1}{4}}\int_{1/2\le |\xi_1|\le 2}\int_{|\xi_2/\xi_1|<1}\frac{\textrm{sin}\ \pi a^{-1}\xi_1}{\pi \xi_1}\frac{\textrm{sin}\ \pi \xi_1(a^{-\tfrac{1}{2}}\xi_2+a^{-1}s)}{\pi(a^{-\tfrac{1}{2}}\xi_2+a^{-1}s)} e^{-\pi i (a^{-1}\xi_1+\xi_1(a^{-\tfrac{1}{2}}\xi_2+a^{-1}s))}\times \\
				& \hspace{1.5 in}  \wh{\psi_1}(\xi_1)\wh{\psi_2}(\xi_2)e^{2\pi i (a^{-1}\xi_1,\xi_1(a^{-\tfrac{1}{2}}\xi_2+a^{-1}s))\cdot t}d\xi_1d\xi_2.
			\end{align*}
		}
		For $0<\tau<1$ and $s_0\ne 0$, we take $0<r_0<\min(\tau, |s_0|)/2$.
		For any $N\in \N$, using integration by parts, we deduce from the above identity that there exists a positive constant $C_N$ such that $|\mathcal{SH}_\psi f(a,s,t)|\le C_N a^{N}$ for all $t\in B_{r_0}(x_0)$ and $s\in (s_0-r_0,s_0+r_0)$. Hence,
		$\eta(a)\le 4\pi r_0^2 C_N a^{N}$ and consequently, $I=\int_{a=0}^{a=1} a^{-2m-3} \eta(a) da<\infty$ by taking $N\ge 2m+3$.
		Hence, $((0,\tau), r\cos\theta_0,r\sin\theta_0))\not \in \wf_m(f)$ for all $m\in\R$ with $r>0$ and $\theta_0\in (-\pi,\pi]\bs\{0, \pi\}$ for $0<\tau<1$.
		
		For $\tau=0, s_0\ne 0$ and $a>0$ sufficiently small,
		\begin{align*}
			\int_{t\in M_{as}^{-1} B_{r_0}(x_0)}
			 |(\wh{f}(M_{as}^{-T}\cdot)\overline{\wh{\psi}})^\vee(t)|^2 dt
			&\asymp
			\int_{t\in \R^2}
			 |(\wh{f}(M_{as}^{-T}\cdot)\overline{\wh{\psi}})^\vee(t)|^2 dt\\
			&=\int_{\xi\in \Xi(a,s)}
			 |\wh{f}(M_{as}^{-T}\xi)\overline{\wh{\psi}(\xi)}|^2 d\xi.
		\end{align*}
		For $c>0$, we define a set
		$E_c:=\{\xi\in \R^2 \setsp |\wh{\psi}(\xi)|\ge c\}$. Take $c>0$ small enough such that $\supp(\wh{\psi})\bs E_c$ is small.
		For $\tau=0$,
		there exists a positive constant $C_1$ such that $\lim_{a\to 0^+} a^{-4}\eta(a)$ $\ge C_1$, since by $|s|\ge |s_0|/2$, for $0<r_0<|s_0|/2$ and sufficiently small $a>0$,
		\begin{small}
			\begin{align*}
				\eta(a) &\geq   c^2 \frac{a^{7/2}}{4\pi^2}\int_{s=s_0-r_0}^{s=s_0+r_0}\int_{\tfrac{1}{2a}<|\xi_1|<\tfrac{2}{a}}\int_{\left|\tfrac{\xi_2}{\xi_1}-s\right|<\sqrt{a}}\left|\frac{\sin \pi\xi_2}{\pi\xi_2}\right|^2d\xi_2d\xi_1ds\\
				&\geq c^2 \tfrac{a^{11/2}}{16\pi^4(s_0+r_0+\sqrt{a})^2}\int_{s=s_0-r_0}^{s=s_0+r_0}\int_{\tfrac{1}{2a}<|\xi_1|<\tfrac{2}{a}}\int_{\left|\tfrac{\xi_2}{\xi_1}-s\right|<\sqrt{a}}d\xi_2d\xi_1ds\\
				&\ge C_1a^4,
			\end{align*}
		\end{small}
		where
		%$C_2:=3a^4r_0^{-1}\pi^{-4}$
		%$C_2:=C_1 c^2\frac{15 r_0}{49 \pi^4 s_0^2}$
$C_1:= c^2\frac{15 r_0}{49 \pi^4 s_0^2}$ by noting that the value of the above triple integral is $15 r_0 a^{-3/2}$.
		Hence the integral $I$ is divergent by $\int_0^1 a^{-2m-3} a^4 da=\infty$ for $m\ge 1$. The case $|s_0|> 1/2$ can be handled similarly. By Theorem~\ref{thm:swf:integral}, $\left((0,0),(r\cos \theta_0,r\sin \theta_0)\right)\in \wf_m(f)$ for all $m\ge 1$ with $r>0$ and $\theta_0\in (-\pi,\pi]\bs \{0,\pm \tfrac{\pi}{2},\pi\}$. Due to symmetry, we conclude that $S_0\subseteq \wf_m(f)$ for all $m\ge 1$.
		
		We now consider the second case: (ii) $s_0=0$.
		For $m<1/2$, we have $f\in H^m(\R^2)$. By Lemma~\ref{lem:sobolev:wfm} and Theorem~\ref{thm:swf:integral},
		we conclude that
		$((0,\tau), (r,0))\not \in \wf_m(f)$ with $r>0$ and $0\le \tau<1$ for all $m<1/2$.
		Using Theorem~\ref{thm:swf:integral} and the same argument for estimating in the case $s_0\neq 0$, for $s_0=0$, we have
		\begin{small}
			\begin{align*}
				\eta(a) &\geq c^2 \frac{a^{7/2}}{4\pi^2}\int_{s=-r_0}^{s=r_0}
				 \int_{\tfrac{1}{2a}<|\xi_1|<\tfrac{2}{a}}
				 \int_{|\frac{\xi_2}{\xi_1}-s|\le \sqrt{a}}\left|\frac{\sin \pi\xi_2}{\pi\xi_2}\right|^2d\xi_2d\xi_1ds\\
				&\geq c^2 \frac{a^{7/2}}{4\pi^2}\int_{s=-r_0}^{s=r_0}\int_{\tfrac{1}{2a}<|\xi_1|<\tfrac{2}{a}}\int_{\frac{s-\sqrt{a}}{2a}<\xi_2<\frac{s+\sqrt{a}}{2a}}\left|\frac{\sin \pi\xi_2}{\pi\xi_2}\right|^2d\xi_2d\xi_1ds\\
				&\geq c^2 \frac{a^{7/2}}{4\pi^2}\int_{s=-r_0/4}^{s=r_0/4}\int_{\tfrac{1}{2a}<|\xi_1|<\tfrac{2}{a}}\int_{-\frac{1}{4\sqrt{a}}<\xi_2<\frac{1}{4\sqrt{a}}}\left|\frac{\sin \pi\xi_2}{\pi\xi_2}\right|^2d\xi_2d\xi_1ds\\
				&\geq c^2 \frac{a^{7/2}}{4\pi^2}\int_{s=-r_0/4}^{s=r_0/4}\int_{\tfrac{1}{2a}<|\xi_1|<\tfrac{2}{a}}\int_{|\xi_2|<1/4}\left|\frac{\sin \pi\xi_2}{\pi\xi_2}\right|^2d\xi_2d\xi_1ds\\
				 %&=C_2\frac{r_0}{2}a^{5/2}>
				&\ge C_2a^{3},
			\end{align*}
		\end{small}
		where $C_2=\tfrac{3c^2}{8}\pi^{-2}\left(\tfrac{2\pi \operatorname{Si}(\pi/2)-4}{\pi^2}\right)\approx \tfrac{3c^2}{8}\pi^{-2}\times 0.46737$.
		%where $C_4=\|\wh{\psi}\|_{L^\infty}^2 \frac{15}{16 \pi^2}$.
		%Here $\xi_2/\xi_1\in(-r_0-\sqrt{a},r_0+\sqrt{a})$ so $1/|\xi_2|$ is not bounded, hence we cannot get $$\eta(a)\geq c^2 \frac{a^{9/2}}{4\pi^2}\int_{s=-r_0}^{s=r_0/4}\int_{\tfrac{1}{2a}<|\xi_1|<\tfrac{2}{a}}
		%	 \int_{|\frac{\xi_2}{\xi_1}-s|\le \sqrt{a}}
		%	d\xi_2d\xi_1ds$$
		So, $I\ge C_4 \int_0^1 a^{-2m-3} a^3 da=\infty$ for all $m\ge 1/2$. %\textcolor{red}{Hence, By Theorem~\ref{thm:swf:integral},
%		$((0,\tau), (r,0))\in \wf_m(f)$ with $r>0$ and $0\le \tau<1$
%		for all $m\ge 1/2$.
%		Hence, for all $m\ge 1/2$, $(x_0, (r\cos \theta_0, r\sin \theta_0))
%		\in \wf_m(f)$ for $x_0\in \{(0,0), (1,0), (0,1),(1,1)\}$, $r>0$, and $ \theta_0\in \{0,\pm \tfrac{\pi}{2},\pi\}$.		
%		Next for points on the vertical edge $x_0\in \{0,1\}\times (0,1)$ and $s_0=0$,  we obtain $S_2 \subseteq \wf_m(f)$ for all $m\geq 1/2$ using similar estimates above.}
Hence, By Theorem~\ref{thm:swf:integral},
		$((0,\tau), (r,0))\in \wf_m(f)$ with $r>0$ and $0\le \tau<1$
		for all $m\ge 1/2$. Therefore, for points on the vertical edge $x_0\in \{0,1\}\times (0,1)$ and $s_0=0$,  we obtain $S_2 \subseteq \wf_m(f)$ for all $m\geq 1/2$ using similar estimates above.
		By symmetry for the horizontal edge, we have $S_1\subseteq \wf_m(f)$ for all $m\ge 1/2$.
		Hence, we obtain the same conclusion using both the definition of Sobolev wavefront sets in Definition~\ref{sobolevwf} and Theorem~\ref{thm:swf:integral}.
		By Theorem~\ref{thm:swf:decay}, we also have
		$\wf(f)=S_0\cup S_1\cup S_2$. See Figure~\ref{fig:singsupp} for the $C^\infty$ wavefront set of the function $f:=\chi_{[0,1]^2}$.
}\end{example}
%end of example 2

\begin{figure}[htbp]
	\center{		 \includegraphics[width=0.4\linewidth]{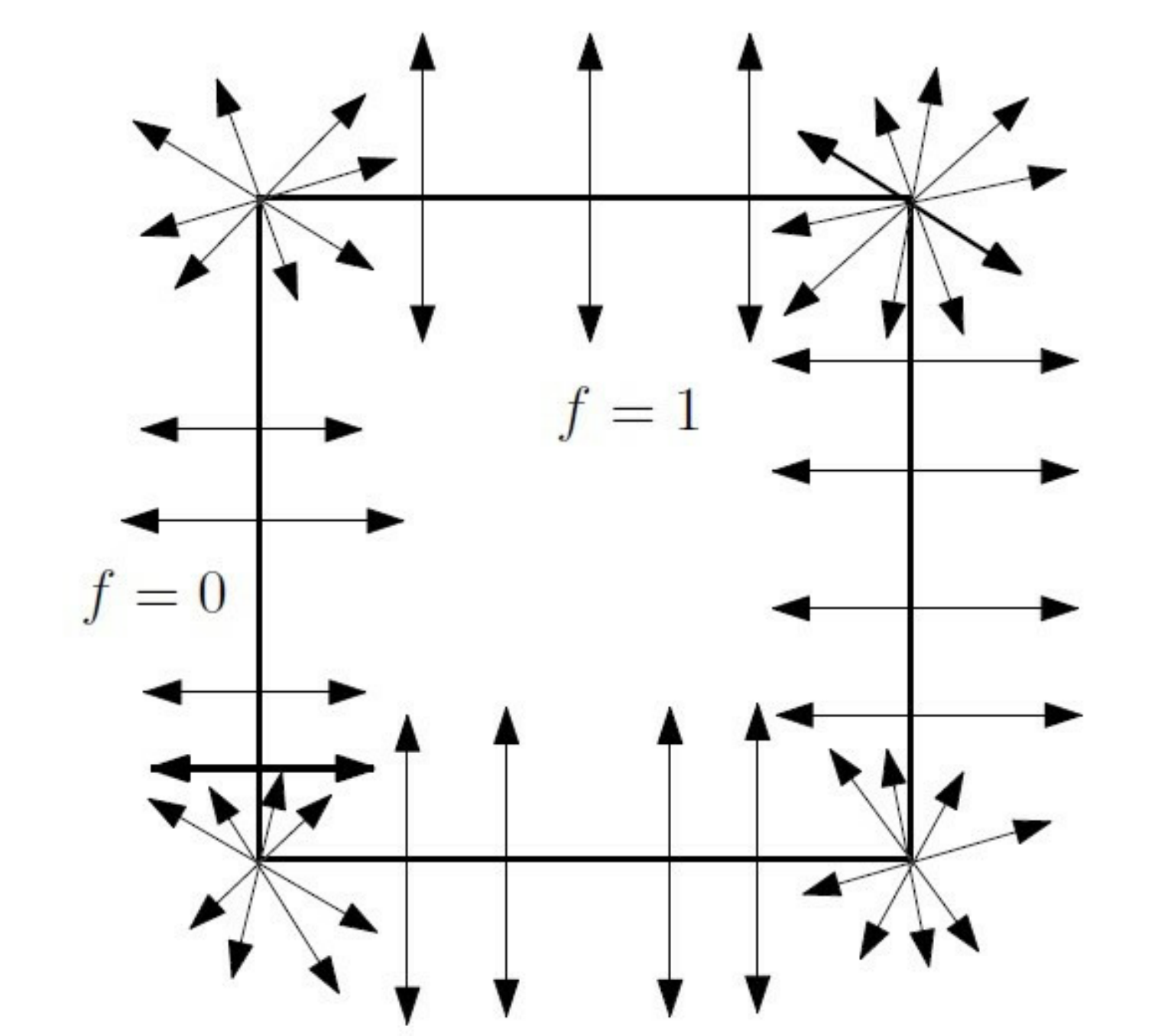}}\label{fig1}
	\caption{The function $f=\chi_{[0,1]^2}$ takes value $1$ on the unit square in $\R^2$ and $f(x)=0$ outside. The singular support, $\ssupp(f)$, is the boundary of the unit square. The directions of the ($C^\infty$) wavefront set of the function $f$ are shown with the arrows.}\label{fig:singsupp}
\end{figure}

Sobolev wavefront sets have been used for studying pseudo-differential operators and non-linear partial differential operators. In particular, this concept helps to prove H\"ormander's propagation theorem, which characterizes the $C^{\infty}$ wavefront set of the distributional solution of a PDE using the principal symbol of the corresponding differential operator.
Some basic properties of $C^\infty$ or Sobolev wavefront sets are given below (see \cite{hormander1997lectures,petersen1983introduction}) and are useful later in this paper.

\begin{prop}\label{sobolevineq}
	For a distribution $f\in \DCOpr$,
	\begin{equation*}\label{wfm:nested}
		\wf_m(f) \subseteq \wf_n(f) \subseteq \wf(f),\qquad \forall\; m\le n
	\end{equation*}
	and $\wf(f)$ is the complement of the interior of $\cap_{m\in \R} (\wf_m(f)^c)$, more precisely,
	\begin{equation*}\label{wf:wfm}
		\wf(f)^c=\left(\bigcap_{m\in \R} \left(\wf_m(f)^c\right)\right)^{o},
	\end{equation*}
	where the superscript $o$ stands for the interior of a set.
\end{prop}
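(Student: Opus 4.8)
The proposition breaks into the nestedness $\wf_m(f)\subseteq\wf_n(f)\subseteq\wf(f)$ for $m\le n$ and the identity $\wf(f)^c=\big(\bigcap_{m\in\R}\wf_m(f)^c\big)^{o}$; the plan is to dispatch the nestedness and one inclusion of the identity quickly and to concentrate on the reverse inclusion. For the nestedness I would fix $(x_0,\xi_0)$ and work with a single admissible pair $(\varphi,\epsilon)$, which is legitimate by the consistency theory recalled at the start of this section. If $(x_0,\xi_0)\notin\wf_n(f)$, then since $(1+|\xi|^2)^m\le(1+|\xi|^2)^n$ for $m\le n$ (the weight is $\ge1$), the integral in \eqref{sobolevwavefront} with exponent $m$ converges whenever the one with exponent $n$ does, so $\wf_m(f)\subseteq\wf_n(f)$. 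If instead $(x_0,\xi_0)\notin\wf(f)$, then $\widehat{\varphi f}$ decays rapidly on $V_{\xi_0,\epsilon}$, so there $(1+|\xi|^2)^n|\widehat{\varphi f}(\xi)|^2\le C_N(1+|\xi|)^{2n-2N}$, which is integrable over $\R^2$ once $N>n+1$; hence $(x_0,\xi_0)\notin\wf_n(f)$ and $\wf_n(f)\subseteq\wf(f)$.

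For the inclusion $\wf(f)^c\subseteq\big(\bigcap_m\wf_m(f)^c\big)^{o}$ I would show that $\wf(f)^c$ is open and (by the nestedness, which is immediate) contained in every $\wf_m(f)^c$. Concretely, if $(x_0,\xi_0)\notin\wf(f)$, pick $(\varphi,\epsilon)$ with $\varphi=1$ on $B_{r}(x_0)$ and $\widehat{\varphi f}$ rapidly decaying on $V_{\xi_0,\epsilon}$; then for any $x_0'\in B_{r/2}(x_0)$ and any $\xi_0'$ with $V_{\xi_0',\epsilon/2}\subseteq V_{\xi_0,\epsilon}$, the same $\varphi$ is admissible at $x_0'$, and rapid decay of $\widehat{\varphi f}$ on $V_{\xi_0',\epsilon/2}$ shows simultaneously that $(x_0',\xi_0')\notin\wf(f)$ and, because rapid decay forces every weighted $L^2$ integral to converge, that $(x_0',\xi_0')\notin\wf_m(f)$ for all $m$. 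Thus a whole neighbourhood of $(x_0,\xi_0)$ lies in $\bigcap_m\wf_m(f)^c$, which is what is needed.

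The reverse inclusion $\big(\bigcap_m\wf_m(f)^c\big)^{o}\subseteq\wf(f)^c$ is the crux. Given $(x_0,\xi_0)$ in the interior, I would first extract, using that each $\wf_m(f)$ is conic, a conic--ball neighbourhood $B_{r_0}(x_0)\times\Gamma_0$ (with $\Gamma_0$ an open cone about $\xi_0$) contained in $\bigcap_m\wf_m(f)^c$, and fix once and for all $\varphi\in\DCO$ with $\varphi=1$ near $x_0$, $\supp\varphi\subseteq B_{r_0}(x_0)$, and support small enough for the consistency theory to apply. For each unit vector $\omega$ in a slightly smaller closed cone $\overline{\Gamma_0'}\subseteq\Gamma_0\cup\{0\}$ and each $k\in\N$, the relation $(x_0,\omega)\notin\wf_k(f)$ gives, via consistency and this fixed $\varphi$, an $\epsilon_{\omega,k}>0$ with $\int_{V_{\omega,\epsilon_{\omega,k}}}(1+|\xi|^2)^k|\widehat{\varphi f}(\xi)|^2\,d\xi<\infty$; compactness of $\overline{\Gamma_0'}\cap S^1$ yields finitely many of these cones covering $\Gamma_0'\cap S^1$, so that summing gives $\int_{\Gamma_0'}(1+|\xi|^2)^k|\widehat{\varphi f}(\xi)|^2\,d\xi<\infty$ --- with the cone $\Gamma_0'$ independent of $k$. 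The remaining step is to turn ``microlocal $H^k$ on $\Gamma_0'$ for every $k$'' into ``rapid pointwise decay on a slightly smaller cone $\Gamma_1\ni\xi_0$'': multiply $\widehat{\varphi f}$ by a smooth conic cutoff supported (away from the origin) in $\Gamma_0'$ and equal to $1$ on $\Gamma_1$, use Paley--Wiener to bound $\widehat{\varphi f}$ and all its $\xi$-derivatives by $C_\alpha(1+|\xi|)^{N_0}$ with $N_0$ the order of the compactly supported distribution $\varphi f$, and combine this derivative bound with the weighted $L^2$ bounds to force $\widehat{\varphi f}$ to decay faster than any polynomial on $\Gamma_1$; hence $(x_0,\xi_0)\notin\wf(f)$.

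I expect the main obstacle to be exactly this uniformity in the Sobolev order $m$: the bare hypothesis ``$(x_0,\xi_0)\notin\wf_m(f)$ for every $m$'' only supplies admissible cones that may collapse onto the ray $\R_+\xi_0$ as $m\to\infty$, and it is the interior hypothesis, pushed through the compactness step, that restores a single cone on which the Sobolev-embedding upgrade can be carried out. The remaining ingredients --- the nestedness, the easy inclusion, and the Paley--Wiener/embedding argument --- are standard bookkeeping with the polynomial weights and the consistency theory.
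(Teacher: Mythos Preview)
Your proposal is correct and follows the standard route. The paper itself does not give a proof of this proposition: it is stated as a known result with citations to H\"ormander and Petersen, and the (commented-out) sketch in the source simply derives the nestedness from the weight monotonicity $(1+|\xi|^2)^m\le(1+|\xi|^2)^n$, notes the easy inclusion from openness of $\wf(f)^c$, and invokes \cite[Lemma~6.2]{petersen1983introduction} for the hard inclusion. Your plan unpacks exactly that lemma: the compactness argument to secure a single cone $\Gamma_0'$ independent of $k$, followed by the upgrade from ``$(1+|\xi|)^k\widehat{\varphi f}\in L^2(\Gamma_0')$ for all $k$'' to rapid pointwise decay on a smaller cone. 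For that last step your sketch is a bit compressed; the cleanest way to execute it is an interpolation: the Paley--Wiener bound $|D\widehat{\varphi f}|\le C(1+|\xi|)^{N_0}$ forces $|\widehat{\varphi f}|\ge|\widehat{\varphi f}(\xi)|/2$ on a ball of radius $\sim|\widehat{\varphi f}(\xi)|(1+|\xi|)^{-N_0}$, and comparing the resulting lower bound on $\int_{B}|\widehat{\varphi f}|^2$ with the weighted $L^2$ bound on $\Gamma_0'$ yields $|\widehat{\varphi f}(\xi)|\lesssim(1+|\xi|)^{(N_0-k)/2}$ for every $k$. With that detail filled in, your argument is complete and matches the literature proof the paper defers to.
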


\begin{comment}
\eqref{wfm:nested} follows directly from $H^n(\R^2)\subseteq H^m(\R^2)$ and the inequality $(1+|\xi|^2)^m \le (1+|\xi|^2)^n$ for all $m\le n$.
Since the open set $\wf(f)^c \subseteq \wf_m(f)^c$ for all $m\in \R$, it is obvious that $\wf(f)^c$ is contained inside the interior of $W:=\bigcap_{m\in \R} (\wf_m(f)^c)$.
Conversely, let $(x_0,\xi_0)$ be an interior point of $W$.
Then there is an open neighborhood $U$ of $x_0$ and an open cone $V$ containing $\xi_0$ such that $U\times (V\cap S^{1})$ is pre-compact in $W$ and is disjoint from $\wf_m(f)$ for all $m\in \R$, where $S^1$ is the unit circle in $\R^2$ with center $0$.
Then \cite[Lemma~6.2]{petersen1983introduction}
tells us
$(x_0,\xi_0)\in \wf(f)^c$.
\end{comment}

\section{Continuous shearlet transform in Sobolev spaces} \label{CSTHm}

To prove Theorems~\ref{thm:swf:integral} and~\ref{thm:swf:decay} for characterizing Sobolev wavefront sets through the continuous shearlet transform, we need to study the continuous shearlet transform $\mathcal{SH}_{\psi}f(a,s,t)$ in the Sobolev space $H^m(\R^2)$ with $m\in \R$.  We now extend the continuous shearlet transform from $L^2(\R^2)$ to the Sobolev space $H^m(\R^2)$ for tempered distributions.

Let $\R_+:=(0,\infty)$.
The shearlet group $\left(\mathbb{R}_+\times \R\right)\times \mathbb{R}^2$
%with a semi direct product
is equipped with a group multiplication: $(a,s,t)\cdot (a',s',t')=(aa',s+s'\sqrt{a},t+M_{as} t)$.
It has been studied by many researchers (e.g., \cite{kutyniok2009resolution}) that for every $f\in L^2(\R^2)$,
\begin{equation}\label{reprodformula}
	f(\cdot)=\int_{\R_+} \int_{\R}\int_{\R^2}\la f, \psi_{ast}\ra \psi_{ast}(\cdot) a^{-3} dt ds da
\end{equation}
holds in the weak sense under the admissibility condition of the function $\psi$:
\begin{equation}\label{admissible}
	 \int_{\R_+}\int_{\R}\left|\wh{\psi}\left(M_{as}^T \xi\right)\right|^2 %\left|\det(M_{as})\right|a^{-3}
	a^{-3/2}
	ds da=1,
\end{equation}
where $M_{as}$ and $\psi_{ast}$  are defined in \eqref{mas} and \eqref{psiast}.
Let $\psi$ be defined in \eqref{psidef}, i.e., $\wh{\psi}(\xi_1,\xi_2):=\wh{\psi_1}(\xi_1)\wh{\psi_2}(\xi_2/\xi_1)$ with $\psi_1, \psi_2$ satisfying the conditions in (C1) and (C2) in Subsection~\ref{shearlet}.
Then it is well known that \eqref{admissible} holds.
Indeed,
by $\det(M_{as})=a^{3/2}$ and
$\wh{\psi}(M_{as}^T\xi)=\wh{\psi_1}(a\xi_1)\wh{\psi_2}(\frac{1}{\sqrt{a}}(\frac{\xi_2}{\xi_1}-s))$,
we have
%
%\begin{align*}
\[
\int_{\R_+}\int_{\R}|\wh{\psi}(M_{as}^T \xi)|^2 a^{-3/2} ds da
=\int_{\R_+} \int_{\R} |\wh{\psi_1}(a\xi_1)|^2 |\wh{\psi_2}(\tfrac{1}{\sqrt{a}}(\tfrac{\xi_2}{\xi_1}-s))|^2
a^{-3/2} ds da
%=\|\wh{\psi_2}\|_{L^2}^2 \int_{\mathbb{R}_+} |\wh{\psi_1}(a\xi_1)|^2 \frac{da}{a}
=1,
\]
%\end{align*}
%
where we used the assumptions $\|\wh{\psi_2}\|_{L^2}=1$ and \eqref{psi1:admissible} in (C1) and (C2).

Since $\psi$ is a Schwartz function,
$\mathcal{SH}_\psi f(a,s,t):=\la f, \psi_{ast}\ra$ is well defined for any
tempered distribution $f$.
By \eqref{psiast}, we have $\psi_{ast}=a^{-3/4} \psi(M_{as}^{-1}(\cdot-t))$ and hence
\begin{equation}\label{hatpsiast}
	\wh{\psi_{ast}}(\xi)=a^{3/4}
	e^{-i2\pi t\cdot \xi} \wh{\psi}(M_{as}^T\xi).
\end{equation}
Therefore,
by $\mathcal{SH}_\psi f(a,s,\cdot)=\la f, \psi_{as\cdot}\ra=
a^{-3/4} f*\overline{\psi(-M_{as}^{-1}\cdot)}$, we obtain
\begin{equation}\label{hatSH}
	 \left(\mathcal{SH}_{\psi}f(a,s,\cdot)\right)^{\wedge}(\xi)
	=a^{{3}/{4}}\wh{f}(\xi)
	\overline{\wh{\psi}\left(M_{as}^T \xi\right)}.
\end{equation}
Since
$\supp(\wh{\psi})\subseteq \{(\xi_1,\xi_2) \setsp 1/2\le |\xi_1|\le 2, |\xi_2/\xi_1|\le 1\}$, using \eqref{hatpsiast}, we obtain
\begin{equation}\label{supp:hatpsiast}
	\supp(\widehat{\psi_{ast}})
	=\supp(\wh{\psi}(M_{as}^T\cdot))=
	M_{as}^{-T} \supp(\wh{\psi})\subseteq
	\Xi(a, s),
\end{equation}
where $\Xi(a,s)$ is defined in \eqref{xias}.
For $(a,s) \not \in (0,1]\times [-2,2]$, we have either $a>1$ or $|s|>2$.
Let $(\xi_1,\xi_2)\in \Xi(a,s)$.
If $a>1$, then $|\xi_1|<2$ and therefore,
$(\xi_1,\xi_2)\not \in \cC$, where the cone $\cC$ is defined in \eqref{hcone}.
If $|s|>2$ and $0<a\le 1$, then $|\xi_2/\xi_1| \ge |s|- \sqrt{a}>1$ and hence, $(\xi_1,\xi_2)\not \in \cC$.
This proves
\begin{equation}\label{hatSH=0}
	 \wh{\psi_{ast}}(\xi)=0=\wh{\psi}\left(M_{as}^T\xi\right), \qquad \forall\; \xi\in \cC \quad \mbox{and}\quad
	(a,s)\not \in (0,1]\times [-2,2].
\end{equation}

Our main goal in this section is to develop some theory and estimate for the continuous shearlet transform in the Sobolev space $H^m(\R^2)$ with any real number $m\in\R$.
Firstly, since $\wh{\psi}$ in Subsection~\ref{shearlet} belongs to $\mathcal{D}(\R^2)$, the shearlet $\psi_{ast}$ defined in \eqref{psiast} and all its derivatives are Schwartz functions with rapid decay at infinity.
Hence, $\la f,\psi_{ast} \ra$ is well defined for every tempered distribution $f$ on $\R^2$.
Now, we  define  a  \textit{continuous shearlet transform}  (CST)  associated to $\psi$  (defined in Subsection~\ref{shearlet}) as
\begin{align*}
	\mathcal{SH}_{\psi}: H^m(\mathbb{R}^2)\rightarrow L^2\left(\left(\R_+\times \R,a^{-3} dsda\right),H^{m}(\mathbb{R}^2)\right),
\end{align*}
by
\begin{align*}
	\mathcal{SH}_{\psi}f(a, s, t):=\left\langle f,\psi_{ast} \right\rangle, \  \mbox{for} \  f \in H^m(\mathbb{R}^2) \  \mbox{and} \  (a, s, t)\in \left(\mathbb{R}_+\times \mathbb{R}\right)\times \mathbb{R}^2.
\end{align*}
Here, the space  $L^2((\R_+\times \R,a^{-3}dsda),H^{m}(\mathbb{R}^2))$ is a fiber space, which is abbreviated by $\mathcal{F}^m$. Note that $\mathcal{F}^m$ is isomorphic to the tensor product $L^2(\R_+\times \R,a^{-3}dsda)\wh{\otimes}$ $H^{m}(\mathbb{R}^2)$. The associated norm of $\mathcal{SH}_{\psi} f$ on  the fiber space $\mathcal{F}^m$ is then defined by
\begin{equation*}\label{fibernorm}
	 \left\|\mathcal{SH}_{\psi}f\right\|_{\mathcal{F}^m}:=
	 \left(\int_{a\in\R_+}\int_{s\in\mathbb{R}}
	 \left\|\mathcal{SH}_{\psi}f(a,s,\cdot)\right\|_{H^m}^2 a^{-3} dsda\right)^{{1}/{2}}.
\end{equation*}

The extended CST from $L^2(\mathbb R^2)$ to   $H^m(\mathbb{R}^2)$ with $m \in \mathbb R$ leads to a unitary operator between the Sobolev space $H^{m}(\mathbb R^2)$ and the fiber space $\mathcal{F}^m$.
Some properties of the extended CST in the Sobolev space $H^m(\R^2)$ are as follows:

\begin{prop}\label{prop:bandcomnorm}
	For  $f \in H^m(\mathbb R^2)$ with $m\in \R$, the following statements hold:	 
	\begin{enumerate}
		\item [(i)]	For every fixed $a\in \R_+$ and $s\in \R$, the mapping $\mathcal{SH}_{\psi}f(a,s,\cdot)$ can be extended uniquely into a continuous mapping from $H^m(\mathbb{R}^2)$ to itself.
		\item[(ii)] $\mathcal{SH}_{\psi}$  is an isometry from $H^m(\mathbb{R}^2)$ to $\mathcal{F}^m$, i.e., $\|\mathcal{SH}_{\psi}f\|_{\mathcal{F}^m}
		=\|f\|_{H^m}$.
		%\item[(iii)]
		%%The $H^m$-norm of $\mathcal{SH}_{\psi}f(a,s,\cdot)$   is equivalent to the $L^{\infty}$-norm of the Fourier transform of $\mathcal{SH}_{\psi}f(a,s,\cdot)$:
		%There exists a positive constant $\beta$ such that
		%%
		%\begin{equation}
		%\begin{split}
		%\beta^{-1}
		%a^{m}
		 %&\left\|\widehat{\mathcal{SH}_{\psi}f}(a,s,\cdot)\right\|_{L^\infty}
		%\le
		 %\left\|\mathcal{SH}_{\psi}f(a,s,\cdot)\right\|_{H^m} \\
		%&\leq \pi \frac{\sqrt{2^{2m+3}-1}}{2^{m+1}}\ a^{-(m+\frac{3}{4})}
		 %\left\|\widehat{\mathcal{SH}_{\psi}f}(a,s,\cdot)\right\|_{L^\infty},
		%\end{split}
		%\end{equation}
		%%
	\end{enumerate}
\end{prop}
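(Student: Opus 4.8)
The plan is to push everything to the Fourier side via the identity \eqref{hatSH}, which already holds for every tempered distribution $f$ since $\psi_{ast}$ is a Schwartz function and $\mathcal{SH}_\psi f(a,s,\cdot)=a^{-3/4}f*\overline{\psi(-M_{as}^{-1}\cdot)}$ is a smooth function of polynomial growth whose Fourier transform is $a^{3/4}\wh f(\xi)\overline{\wh\psi(M_{as}^T\xi)}$. For $f\in H^m(\R^2)$ the distribution $\wh f$ is a genuine measurable function with $(1+|\xi|^2)^{m/2}\wh f\in L^2(\R^2)$, so this right-hand side is an honest function and Plancherel's identity will be available on each fiber.

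For (i) I would fix $a\in\R_+$ and $s\in\R$ and observe that $f\mapsto\mathcal{SH}_\psi f(a,s,\cdot)$ is, by \eqref{hatSH}, Fourier multiplication by $\xi\mapsto a^{3/4}\overline{\wh\psi(M_{as}^T\xi)}$, which is a bounded function because $\wh\psi\in\mathcal{D}(\R^2)$. Plancherel then gives
\begin{align*}
\|\mathcal{SH}_\psi f(a,s,\cdot)\|_{H^m}^2
&=\int_{\R^2}(1+|\xi|^2)^m\, a^{3/2}\,|\wh\psi(M_{as}^T\xi)|^2\,|\wh f(\xi)|^2\,d\xi\\
&\le a^{3/2}\|\wh\psi\|_{L^\infty}^2\,\|f\|_{H^m}^2,
\end{align*}
so the map is a bounded linear operator on $H^m(\R^2)$. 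Since it coincides with the already-defined pairing $\langle f,\psi_{ast}\rangle$ on the dense subspace of Schwartz functions (and on $L^2(\R^2)$), it is the unique continuous extension to all of $H^m(\R^2)$, which proves (i).

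For (ii) I would integrate the displayed fiberwise identity against $a^{-3}\,ds\,da$ and swap the order of integration; every integrand being nonnegative, Tonelli's theorem applies unconditionally and yields
\begin{align*}
\|\mathcal{SH}_\psi f\|_{\mathcal{F}^m}^2
&=\int_{\R_+}\!\int_{\R}\|\mathcal{SH}_\psi f(a,s,\cdot)\|_{H^m}^2\,a^{-3}\,ds\,da\\
&=\int_{\R^2}(1+|\xi|^2)^m\,|\wh f(\xi)|^2\left(\int_{\R_+}\!\int_{\R}|\wh\psi(M_{as}^T\xi)|^2\,a^{-3/2}\,ds\,da\right)d\xi.
\end{align*}
The inner double integral is precisely the left-hand side of the admissibility identity \eqref{admissible}, which equals $1$ for every $\xi$ with $\xi_1\neq 0$ (hence for a.e. $\xi$) for the shearlet $\psi$ of \eqref{psidef}, as verified right after \eqref{admissible}. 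Therefore $\|\mathcal{SH}_\psi f\|_{\mathcal{F}^m}^2=\int_{\R^2}(1+|\xi|^2)^m|\wh f(\xi)|^2\,d\xi=\|f\|_{H^m}^2$, which is (ii).

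The substantive content is carried entirely by the admissibility identity \eqref{admissible} and the boundedness of $\wh\psi$; what remains are routine bookkeeping checks — that \eqref{hatSH} is legitimate for $f\in H^m$ and not only for $f\in L^2$ (it follows from the mapping property of convolution of a tempered distribution with a Schwartz function), that $\wh f$ may be treated as a function so the fiberwise Plancherel step is valid, and that the interchange of integrals is justified (immediate from nonnegativity of the integrands). I do not expect any genuine obstacle here.
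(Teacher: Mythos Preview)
Your proof is correct and follows essentially the same route as the paper: both use \eqref{hatSH} to compute $\|\mathcal{SH}_\psi f(a,s,\cdot)\|_{H^m}^2$ on the Fourier side, bound it by $a^{3/2}\|\wh\psi\|_{L^\infty}^2\|f\|_{H^m}^2$ for (i), and then integrate in $(a,s)$ and invoke the admissibility identity \eqref{admissible} after swapping the order of integration for (ii). Your additional remarks on why \eqref{hatSH} extends to $H^m$ and on Tonelli versus Fubini are welcome clarifications but do not change the argument.
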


\begin{proof} (i)
	By \eqref{hatSH} and the definition of $H^m(\R^2)$, we deduce that
	\begin{equation*}\label{SHfHm}
		 \left\|\mathcal{SH}_{\psi}f(a,s,\cdot)\right\|_{H^m}^2
		 %&=\int_{\mathbb{R}^2}(1+|\xi|^2)^m \left|\left(\mathcal{SH}_{\psi}f(a,s,\cdot)\right)^{\wedge}(\xi)\right|^2d\xi\\ &
		 =a^{{3}/{2}}\int_{\R^2}(1+|\xi|^2)^m |\wh{f}(\xi)|^2 \left|\wh{\psi}\left(M_{as}^T \xi\right)\right|^2 d\xi,
	\end{equation*}
	from which we straightforwardly have
	 $\left\|\mathcal{SH}_{\psi}f(a,s,\cdot)\right\|_{H^m}^2
	\le a^{{3}/{2}} \|\widehat{\psi}\|_{L^\infty}^2
	\|f\|_{H^m}^2$.
	
	\noindent(ii) Using \eqref{hatSH} and Fubini's theorem, by the admissibility condition in \eqref{admissible}, we have
	\begin{align*} \left\|\mathcal{SH}_{\psi}f\right\|_{\mathcal{F}^m}^2
		&=\int_{\R_+}\int_{\R}    \left\|\mathcal{SH}_{\psi}f(a,s,\cdot)\right\|_{H^m}^2a^{-3} ds da\\ &=\int_{\mathbb{R}^2}\left(\int_{\R_+}\int_{\R}a^{{3}/{2}}\left|\wh{\psi}\left(M_{as}^T\ \xi\right)\right|^2 a^{-3} dsda\right)(1+|\xi|^2)^m |\wh{f}(\xi)|^2  d\xi\\
		&=\int_{\mathbb{R}^2}
		(1+|\xi|^2)^m |\wh{f}(\xi)|^2  d\xi=\|f\|_{H^m}^2.
	\end{align*}
This completes the proof.
\end{proof}

The above result is established where the dilation and direction parameters $a$ and $s$ vary on the non-compact sets $\R_+$ and $\R$, respectively. Now, our aim is to  obtain the  above result for the Sobolev space $H^m(\mathbb{R}^2)$ when both the parameters $a$ and $s$ vary on compact sets.
Let $\mathcal{C}$ be the horizontal cone defined in \eqref{hcone}. By switching $\xi_1$ with $\xi_2$,
we define the vertical cone $\mathcal{C}^v:=\{ (\xi_2,\xi_1) \setsp (\xi_1,\xi_2)\in \mathcal{C}\}$.
Consider the subspace $H^m(\mathcal{C})^{\vee}$ of $H^m(\R^2)$ over the horizontal cone $\mathcal{C}$ as follows:
\begin{equation*}\label{Hm:hcone}
	H^m(\mathcal{C})^{\vee}:=\{ f\in H^m(\mathbb{R}^2) \setsp \supp(\wh{f})\subseteq \mathcal{C}\}.
\end{equation*}
Similarly, we define $H^m(\mathcal{C}^v)^{\vee}:=\{ f\in H^m(\mathbb{R}^2) \setsp \supp(\wh{f})\subseteq \mathcal{C}^v\}$ over $\mathcal{C}^v$.
Let $R_0:=(0,1]\times [-2,2]$.
We define the fiber spaces
$$
\mathcal{F}^m_{\mathcal C}:=L^2((R_0,a^{-3} dsda),H^m(\mathcal{C})^{\vee})\quad  \mbox{and} \quad  \mathcal{F}^m_{\mathcal C^v}:=L^2((R_0,a^{-3} dsda),H^m(\mathcal{C}^v)^{\vee}).
$$

Now we establish the following result on restricting the  continuous shearlet transform $\mathcal{SH}_{\psi}$ on the Sobolev subspaces $H^m(\mathcal{C})^{\vee}$ and $H^m(\mathcal{C}^v)^{\vee}$.

\begin{prop}\label{prop:shfcone}
	The following identities are satisfied
	\begin{equation}\label{SHfHmcone}
		\|\mathcal{SH}_{\psi} f\|_{\cF^m_{\cC}}^2=\|f\|_{H^m}^2=
		 \|\mathcal{SH}_{\psi}f\|_{\cF^m}^2, \qquad \forall\, f\in H^m(\cC)^\vee
	\end{equation}
	and
	\[
	\|\mathcal{SH}_{\psi}^{(v)} f\|_{\cF^m_{\cC^v}}^2=\|f\|_{H^m}^2=
	\|\mathcal{SH}_{\psi}^{(v)} f\|_{\cF^m}^2, \qquad \forall\, f\in H^m(\cC^v)^\vee.
	\]
\end{prop}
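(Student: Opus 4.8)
The plan is to deduce the two cone identities in Proposition~\ref{prop:shfcone} from the global isometry $\|\mathcal{SH}_{\psi} f\|_{\cF^m}=\|f\|_{H^m}$ of Proposition~\ref{prop:bandcomnorm}(ii), so that the real content is to verify, once $\supp(\widehat f)$ is confined to the cone $\mathcal{C}$, two facts: (a) every shearlet slice $\mathcal{SH}_{\psi} f(a,s,\cdot)$ again has Fourier transform supported in $\mathcal{C}$, hence genuinely lies in $H^m(\mathcal{C})^\vee$, so the transform really maps into $\cF^m_{\cC}$; and (b) all slices with parameters $(a,s)$ outside the compact slab $R_0=(0,1]\times[-2,2]$ vanish identically, so the fiber integral over the whole group collapses to the fiber integral over $R_0$.

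First I would recall the Fourier-side identity \eqref{hatSH}, $\bigl(\mathcal{SH}_{\psi} f(a,s,\cdot)\bigr)^{\wedge}(\xi)=a^{3/4}\widehat f(\xi)\,\overline{\widehat\psi(M_{as}^T\xi)}$; since $f\in H^m(\R^2)$ forces $\widehat f$ to be an honest function in $L^2\bigl((1+|\xi|^2)^m\,d\xi\bigr)$ and $\widehat\psi\in\mathcal{D}(\R^2)$, this product is unambiguous and its support lies in $\supp(\widehat f)\cap\Xi(a,s)$, with $\Xi(a,s)$ as in \eqref{xias}. If $f\in H^m(\mathcal{C})^\vee$, this support sits inside $\mathcal{C}$ for the trivial reason that $\supp(\widehat f)\subseteq\mathcal{C}$, which gives (a). For (b), identity \eqref{hatSH=0} says $\widehat\psi(M_{as}^T\xi)=0$ for every $\xi\in\mathcal{C}$ as soon as $(a,s)\notin R_0$; multiplying by $\widehat f$, which is supported in $\mathcal{C}$, kills the slice, so $\mathcal{SH}_{\psi} f(a,s,\cdot)=0$ there. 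Consequently $\|\mathcal{SH}_{\psi} f\|_{\cF^m}^2=\int_{R_0}\|\mathcal{SH}_{\psi} f(a,s,\cdot)\|_{H^m}^2 a^{-3}\,dsda=\|\mathcal{SH}_{\psi} f\|_{\cF^m_{\cC}}^2$, and Proposition~\ref{prop:bandcomnorm}(ii) supplies the middle term $\|f\|_{H^m}^2$, proving the first chain of equalities. For the vertical statement I would transport everything through the intertwining relation $\mathcal{SH}^{(v)}_\psi f(a,s,t)=\mathcal{SH}_\psi\bigl(f(J\cdot)\bigr)(a,s,Jt)$ of Subsection~\ref{shearlet}: the involution $J$ is orthogonal and swaps coordinates, so $f\mapsto f(J\cdot)$ is a unitary of $H^m(\R^2)$ carrying $H^m(\mathcal{C}^v)^\vee$ onto $H^m(\mathcal{C})^\vee$ while $t\mapsto Jt$ preserves $dt$; applying the horizontal identities to $g:=f(J\cdot)\in H^m(\mathcal{C})^\vee$ then yields the vertical ones.

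The only delicate point, and the closest thing to an obstacle, is the support bookkeeping underlying (a) and (b): one must be sure the product $\widehat f\cdot\overline{\widehat\psi(M_{as}^T\cdot)}$ may be handled pointwise (it may, since here $\widehat f$ is locally $L^2$ and $\widehat\psi\in\mathcal{D}(\R^2)$), and one must note that although $\Xi(a,s)$ itself need not be contained in $\mathcal{C}$, its intersection with $\supp(\widehat f)$ is, simply because $\supp(\widehat f)$ already is. Everything past that is a direct appeal to \eqref{hatSH}, \eqref{hatSH=0}, and Proposition~\ref{prop:bandcomnorm}.
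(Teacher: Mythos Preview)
Your proposal is correct and follows essentially the same route as the paper: use \eqref{hatSH} to see that each slice has Fourier support inside $\mathcal{C}$, invoke \eqref{hatSH=0} to kill all slices with $(a,s)\notin R_0$, and then apply Proposition~\ref{prop:bandcomnorm}(ii) for the isometry. Your treatment of the vertical case via the $J$-intertwining is slightly more explicit than the paper's (which simply leaves it implicit by symmetry), but the argument is the same in substance.
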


\begin{proof} For $\xi \not \in \cC$, by $\wh{f}(\xi)=0$ and \eqref{hatSH}, we have $(\mathcal{SH}_\psi f(a,s,\cdot))^{\wedge}(\xi)=0$.
	For $\xi\in \cC$ but $(a,s)\not\in I_0:= (0,1]\times [-2,2]$, it follows directly from \eqref{hatSH=0} that $(\mathcal{SH}_\psi f(a,s,\cdot))^{\wedge}(\xi)=0$.
	Thus, it is now trivial to conclude that
	 $\|\mathcal{SH}_{\psi}f\|_{\cF^m_{\cC}}^2=
	\|\mathcal{SH}_{\psi}f\|_{\cF^m}^2$ for all $f\in H^m(\cC)^\vee$.
	Since we already proved $\|\mathcal{SH}_{\psi} f\|_{\cF^m}^2=\|f\|_{H^m}^2$ in item (ii) of Proposition~\ref{prop:bandcomnorm},
	this proves \eqref{SHfHmcone}.
\end{proof}

The above  result  shows that a function $f$ in $H^m(\mathcal{C})^{\vee}$(or $ H^m(\mathcal{C}^v)^{\vee}$) can be continuously reproduced by using the continuous horizontal (or vertical) shearlet transform, but neither of them is sufficient to produce all $f \in H^m(\R^2)$. To overcome this problem, noting that $\mathbb R^2=\lfD \cup \mathcal{C} \cup  \mathcal{C}^v$ with $\lfD:=[-2,2]^2$,
we decompose $f$ as follows:
\begin{equation}\label{decomp}
	 f=P_{\lfD}f+P_{\mathcal{C}}f+P_{\mathcal{C}^{v}}f
	\quad \mbox{with}\quad
	 \wh{P_{\lfD}f}=\wh{f}\chi_{\lfD},\quad
	 \wh{P_{\mathcal{C}}f}=\wh{f}\chi_{\mathcal{C}},\quad
	 \wh{P_{\mathcal{C}^v}f}=\wh{f}\chi_{\mathcal{C}^v}.
\end{equation}
As a direct consequence of Proposition~\ref{prop:shfcone}, we have

\begin{theorem}\label{thm:sobolevis}
	For every $f\in H^m(\R^2)$ with $m\in \R$,
	the following identity holds:
	\[
	\|f\|_{H^m}^2=
	\|P_{\lfD} f\|_{H^m}^2+
	 \left\|\mathcal{SH}_{\psi}(P_{\mathcal{C}}f)
	 \right\|_{\mathcal{F}^m}^2+\left\|\mathcal{SH}_{\psi}^v(P_{\mathcal{C}^{v}}f)
	\right\|_{\mathcal{F}^m}^2.
	\]
	%where $\eta(t)=\langle P_{\lfD}f, \Phi(\cdot-t)\rangle$ for $t \in \R^2$.
\end{theorem}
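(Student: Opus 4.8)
The plan is to obtain the claimed identity as a bookkeeping consequence of an orthogonal decomposition of $\wh{f}$ in the frequency plane, followed by two applications of Proposition~\ref{prop:shfcone}. First I would record the geometric fact $\R^2=\lfD\cup\cC\cup\cC^v$: given $\xi=(\xi_1,\xi_2)\in\R^2$, if $\max(|\xi_1|,|\xi_2|)\le 2$ then $\xi\in\lfD$; otherwise, if $|\xi_1|\ge|\xi_2|$ then $|\xi_2/\xi_1|\le 1$ and $|\xi_1|\ge 2$, so $\xi\in\cC$, while if $|\xi_2|\ge|\xi_1|$ then symmetrically $\xi\in\cC^v$. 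Moreover the pairwise intersections $\lfD\cap\cC$, $\lfD\cap\cC^v$, $\cC\cap\cC^v$ lie inside the Lebesgue-null sets $\{|\xi_1|=2\}$, $\{|\xi_2|=2\}$, $\{|\xi_1|=|\xi_2|\}$, respectively. Hence $\chi_{\lfD}+\chi_{\cC}+\chi_{\cC^v}=1$ almost everywhere on $\R^2$, and since $f\in H^m(\R^2)$ has $\wh f$ a genuine measurable function (square integrable against $(1+|\xi|^2)^m$), the identity $\wh f=\wh f\chi_{\lfD}+\wh f\chi_{\cC}+\wh f\chi_{\cC^v}$ holds a.e., i.e. $f=P_{\lfD}f+P_{\cC}f+P_{\cC^v}f$ in the sense of \eqref{decomp}, with each summand again in $H^m(\R^2)$ since its Fourier transform is dominated by $|\wh f|$.

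Next I would invoke orthogonality in $H^m(\R^2)$ with respect to the inner product $\la g,h\ra_{H^m}:=\int_{\R^2}(1+|\xi|^2)^m\wh g(\xi)\overline{\wh h(\xi)}\,d\xi$. For two distinct sets $A,B\in\{\lfD,\cC,\cC^v\}$, the cross term $\la P_A f,P_B f\ra_{H^m}$ equals $\int_{A\cap B}(1+|\xi|^2)^m|\wh f(\xi)|^2\,d\xi$, which vanishes because $A\cap B$ is Lebesgue-null. By Pythagoras,
\[
\|f\|_{H^m}^2=\|P_{\lfD}f\|_{H^m}^2+\|P_{\cC}f\|_{H^m}^2+\|P_{\cC^v}f\|_{H^m}^2 .
\]

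Finally, since $\wh{P_{\cC}f}=\wh f\chi_{\cC}$ is supported in $\cC$, we have $P_{\cC}f\in H^m(\cC)^\vee$, and likewise $P_{\cC^v}f\in H^m(\cC^v)^\vee$. Applying the two identities of Proposition~\ref{prop:shfcone} gives $\|P_{\cC}f\|_{H^m}^2=\|\mathcal{SH}_{\psi}(P_{\cC}f)\|_{\cF^m}^2$ and $\|P_{\cC^v}f\|_{H^m}^2=\|\mathcal{SH}^{(v)}_{\psi}(P_{\cC^v}f)\|_{\cF^m}^2$; substituting these into the displayed identity yields exactly the statement of Theorem~\ref{thm:sobolevis}.

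I do not expect a genuine obstacle here: the only point requiring any care is the measure-theoretic bookkeeping, namely that $\lfD$, $\cC$, $\cC^v$ cover $\R^2$ and overlap only on null sets, so that both the reconstruction $f=P_{\lfD}f+P_{\cC}f+P_{\cC^v}f$ and the orthogonality of the three pieces are valid even though the characteristic functions appearing in \eqref{decomp} do not literally partition unity. Beyond that, the proof is an immediate appeal to Proposition~\ref{prop:shfcone}.
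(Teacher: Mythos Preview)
Your proposal is correct and follows essentially the same approach as the paper: the paper's proof simply notes that $\R^2$ is an almost disjoint union of $\lfD$, $\mathcal{C}$, and $\mathcal{C}^v$, deduces the Pythagorean identity $\|f\|_{H^m}^2=\|P_{\lfD}f\|_{H^m}^2+\|P_{\mathcal{C}}f\|_{H^m}^2+\|P_{\mathcal{C}^v}f\|_{H^m}^2$, and then invokes Proposition~\ref{prop:shfcone}. You have merely spelled out the measure-theoretic bookkeeping that the paper leaves implicit.
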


\begin{proof} Since $\R^2$ is an almost disjoint union of
	$\lfD$, $\mathcal{C}$ and  $\mathcal{C}^v$, by definition, we have $\|f\|_{H^m}^2=\|P_{\lfD} f\|_{H^m}^2+\| P_{\mathcal{C}}f\|_{H^m}^2+\|P_{\mathcal{C}^v}f\|_{H^m}^2$. Now the claim follows directly from Proposition~\ref{prop:shfcone}.
\end{proof}

Note that the reproducing formula \cite[(3.7)]{kutyniok2009resolution} holds as a corollary of Theorem~\ref{thm:sobolevis} for $f\in H^m(\mathbb{R}^2)$ with $m=0$.

\section{Proofs of Theorems~\ref{thm:swf:integral} and~\ref{thm:swf:decay}}
\label{sec:swf:proof}

In this section we shall prove Theorems~\ref{thm:swf:integral} and~\ref{thm:swf:decay}, which
address the relationship between the continuous shearlet transform and microlocal properties of a distribution. So we will perform microlocal analysis using the continuous shearlet transform.

According to Lemma~\ref{lem:swf:def}, to prove Theorem~\ref{thm:swf:integral}, we have to find suitable conditions on which the microlocal Sobolev integral $\int_{U_{\xi_0,\epsilon}}|\widehat{\varphi f}(\xi)|^2|\xi|^{2m}d\xi$ is finite.
So, we now decompose the localized function $\varphi f$.
Using the reproducing formula in \eqref{reprodformula} and noting $\mathcal{SH_\psi} (\varphi f)(a,s,t)=\la \varphi f, \psi_{ast}\ra$, we have
\[
\wh{\varphi f}(\xi)=
\int_{\R_+}\int_\R \int_{\R^2} \wh{\psi_{ast}}(\xi) \mathcal{SH}_\psi (\varphi f)(a,s,t) a^{-3} dt ds da.
\]
Now by \eqref{hatSH=0}, if $\xi\in \cC$, then $\wh{\psi_{ast}}(\xi)=0$ for all $(a,s)\not \in (0,1]\times [-2,2]$. Therefore,
\[
\wh{\varphi f}(\xi)\chi_{\cC}(\xi)=
\chi_{\cC}(\xi) \int_{a=0}^{a=1}\int_{s=-2}^{s=2} \int_{t\in \R^2} \wh{\psi_{ast}}(\xi) \mathcal{SH}_\psi (\varphi f)(a,s,t) a^{-3} dt ds da.
\]
Consequently, we have the following decomposition:
\begin{align}\label{decomposewholedomain}
	(\widehat{\varphi f})(\xi)=
	&(\widehat{P_{\lfD}(\varphi f)})(\xi)+\chi_{\cC}(\xi)
	 \int_{a=0}^{a=1}\int_{s=-2}^{s=2}\int_{t\in \R^2}\widehat{\psi_{ast}}(\xi)\mathcal{SH}_{\psi}(\varphi f)(a,s,t)a^{-3} dt ds da\nonumber\\
	 &+\chi_{\cC^v}(\xi)\int_{a=0}^{a=1}\int_{s=-2}^{s=2}
	\int_{t\in \R^2}\widehat{\psi^{(v)}_{ast}}(\xi)\mathcal{SH}_{\psi}^{(v)}(\varphi f)(a,s,t)a^{-3} dt ds da.
\end{align}
Let $x_0\in \R^2$ and $\xi_0\in \R^2\bs\{0\}$. We define $s_0:=\tan(\mbox{Arg}(\xi_0))$, which is just the slope of the direction $\xi_0$. For small $\epsilon>0$, we observe that
\begin{equation}\label{UW}
	U_{\pm \xi_0,\epsilon} \subseteq \left\{ \tfrac{1}{a}(\cos(\arctan s),
	\sin(\arctan s))
	\setsp |a|\in (0,1), |s-s_0|<r_0\right\}=:W_{s_0,r_0},
\end{equation}
where
\begin{equation}\label{r0}
	 r_0:=\max(\tan(\mbox{Arg}(\xi_0)+\epsilon)-s_0, s_0-\tan(\mbox{Arg}(\xi_0)-\epsilon))>0.
\end{equation}
Note that $r_0\to 0^+$ as $\epsilon\to 0^+$.
Define $N_{r_0}(s_0,x_0):=I_{s_0}\times B_{r_0}(x_0)$, where
\begin{equation*}\label{Is0}
	I_{s_0}:=[-2,2]\cap [s_0-2r_0,s_0+2r_0].
\end{equation*}
Then the decomposition in \eqref{decomposewholedomain} can be rewritten as
\begin{equation}\label{decom}
	(\widehat{\varphi f})(\xi)=
	(\widehat{P_{\lfD}(\varphi f)})(\xi)
	+\wh{g_1}(\xi)+\wh{g_2}(\xi)+
	\wh{g_3}(\xi)+\wh{g_4}(\xi),
\end{equation}
where
\begin{equation}\label{g1}
	\wh{g_1}(\xi):=
	\chi_{\cC}(\xi)
	\int_{a=0}^{a=1}\int_{(s,t)\in I_{s_0}\times B_{r_0}(x_0)}
	 \widehat{\psi_{ast}}(\xi)\mathcal{SH}_{\psi}(\varphi f)(a,s,t)a^{-3} ds dt da
\end{equation}
and
\[
\wh{g_2}(\xi):=
\chi_{\cC}(\xi)
\int_{a=0}^{a=1}\int_{
	(s,t)\in ([-2,2]\times\R^2)\bs N_{r_0}(s_0,x_0)}
\widehat{\psi_{ast}}(\xi)\mathcal{SH}_{\psi}(\varphi f)(a,s,t)a^{-3} dsdtda,
\]
and $g_3,g_4$ are similarly defined as $g_1, g_2$ by exchanging $\mathcal{SH}_\psi$ with $\mathcal{SH}_\psi^{(v)}$, respectively.
%Note that $\Omega^c:=([-2,2]\times \R^2)\bs \Omega$.

Since $\wh{\varphi f}$ is continuous and $\wh{P_{\lfD} (\varphi f)}(\xi)=\wh{(\varphi f)}(\xi) \chi_{\lfD}(\xi)$, by the definition of $U_{\xi_0,\epsilon}$ in \eqref{Wxie},
it is trivial to conclude that
\[
\int_{U_{\xi_0,\epsilon}} |\wh{P_{\lfD} (\varphi f)}(\xi)|^2 |\xi|^{2m}d\xi
\le \left(\sup_{\xi\in \lfD} |\wh{\varphi f}(\xi)|^2\right) \int_{U_{\xi_0,\epsilon}\cap \lfD} |\xi|^{2m}d\xi<\infty,
\]
for all $\xi_0\in \R^2 \bs\{0\}$, $\epsilon>0$ and $m\in \R$.

We have the following result estimating $g_2$.

\begin{prop}\label{convg2}
	For all $\xi_0\in \R^2\backslash\{0\}$, $0<\epsilon<\pi/2$ and $m\in \R$,
	\begin{equation*}\label{g2}
		\int_{U_{\pm \xi_0,\epsilon}} |\wh{g_2}(\xi)|^2 |\xi|^{2m} d\xi<\infty.
	\end{equation*}
\end{prop}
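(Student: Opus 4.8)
The plan is to prove that $\wh{g_2}$ decays faster than any polynomial on $U_{\pm\xi_0,\epsilon}$: for every $M>0$ there is $C_M>0$ with $|\wh{g_2}(\xi)|\le C_M|\xi|^{-M}$ for all $\xi\in U_{\pm\xi_0,\epsilon}$. The conclusion then follows at once, since $U_{\pm\xi_0,\epsilon}$ lies in a cone and $\int_{U_{\pm\xi_0,\epsilon}}|\xi|^{2m-2M}\,d\xi<\infty$ once $M$ is large. Throughout I use \eqref{UW}, i.e. $U_{\pm\xi_0,\epsilon}\subseteq W_{s_0,r_0}$, so each $\xi\in U_{\pm\xi_0,\epsilon}$ has slope $\xi_2/\xi_1$ with $|\xi_2/\xi_1-s_0|<r_0$, and whenever $\wh{\psi_{ast}}(\xi)\ne0$ the support condition \eqref{supp:hatpsiast} forces $\xi\in\Xi(a,s)$, hence $|\xi_1|\in[\tfrac1{2a},\tfrac2a]$ and $|\xi_2/\xi_1-s|\le\sqrt a$; in particular $|\xi|\asymp|\xi_1|\asymp 1/a$ on the support. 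I split the $(s,t)$-integration domain $([-2,2]\times\R^2)\setminus N_{r_0}(s_0,x_0)$ into the disjoint pieces $\{s\in[-2,2]\setminus I_{s_0},\ t\in\R^2\}$ and $\{s\in I_{s_0},\ t\in\R^2\setminus B_{r_0}(x_0)\}$, giving $\wh{g_2}=\wh{g_2^{\mathrm{dir}}}+\wh{g_2^{\mathrm{loc}}}$.

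For the direction-far piece $g_2^{\mathrm{dir}}$: if $s\in[-2,2]\setminus I_{s_0}$ then $|s-s_0|>2r_0$, while for $\xi\in U_{\pm\xi_0,\epsilon}$ with $\wh{\psi_{ast}}(\xi)\ne0$ one has $|s-s_0|\le|s-\xi_2/\xi_1|+|\xi_2/\xi_1-s_0|\le\sqrt a+r_0$; hence $\sqrt a>r_0$, and combined with $a\le2/|\xi_1|$ this forces $|\xi_1|<2/r_0^2$. Thus $\wh{g_2^{\mathrm{dir}}}$ vanishes on $U_{\pm\xi_0,\epsilon}\cap\{|\xi_1|\ge2/r_0^2\}$. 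On the remaining bounded set $|\xi|$ is bounded, and using $|\wh{\psi_{ast}}(\xi)|\le a^{3/4}\|\wh\psi\|_{L^\infty}$ together with the fact that $\varphi f$ is a compactly supported distribution (so $t\mapsto\mathcal{SH}_\psi(\varphi f)(a,s,t)$ decays faster than any power of $|t-x_0|$, uniformly for $(a,s)\in[r_0^2,1]\times[-2,2]$, hence $\int_{\R^2}|\mathcal{SH}_\psi(\varphi f)(a,s,t)|\,dt$ is bounded on that compact parameter range), one sees $\wh{g_2^{\mathrm{dir}}}$ is bounded there, so $\int_{U_{\pm\xi_0,\epsilon}}|\wh{g_2^{\mathrm{dir}}}(\xi)|^2|\xi|^{2m}\,d\xi<\infty$.

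For the location-far piece $g_2^{\mathrm{loc}}$, the key estimate is rapid joint decay of $\mathcal{SH}_\psi(\varphi f)(a,s,t)$ in $a$ and in $t$ when $t$ is away from $\supp\varphi$. I may assume $\supp\varphi\subseteq B_{r_0/2}(x_0)$ (legitimate, as $r_0$ in \eqref{r0} depends only on $\epsilon$ and $\xi_0$). Writing $\psi_{ast}=a^{-3/4}\psi(M_{as}^{-1}(\cdot-t))$ and bounding $|\la\varphi f,\psi_{ast}\ra|$ by a finite sum of sup-norms over $\supp\varphi$ of derivatives of $\psi_{ast}$, and using that the entries of $M_{as}^{-1}$ are $O(1/a)$ while $\|M_{as}\|\le\sqrt{6a}$, one obtains for every $N$ a constant $C_N$ with
\[
|\mathcal{SH}_\psi(\varphi f)(a,s,t)|\le C_N\,a^{-3/4-k}\bigl(1+|t-x_0|/(c\sqrt a)\bigr)^{-N},\qquad |t-x_0|\ge r_0,\ |s|\le2,\ 0<a\le1,
\]
where $k$ is the order of $\varphi f$ and $c>0$ is fixed. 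Integrating this over $\{|t-x_0|\ge r_0\}$ (substitute $u=(t-x_0)/(c\sqrt a)$ and use $|t-x_0|\ge r_0$) yields $\int_{|t-x_0|\ge r_0}|\mathcal{SH}_\psi(\varphi f)(a,s,t)|\,dt\le C_N a^{N/2-3/4-k}$, which is $\le C_M a^M$ for any prescribed $M$ after choosing $N$ large. Inserting this into the definition of $\wh{g_2^{\mathrm{loc}}}$, using $|\wh{\psi_{ast}}(\xi)|\le a^{3/4}\|\wh\psi\|_{L^\infty}$, and restricting the $a$-integral to $[\tfrac1{2|\xi_1|},\tfrac2{|\xi_1|}]$ and the $s$-integral to an interval of length $2\sqrt a$ (the constraints from $\wh{\psi_{ast}}(\xi)\ne0$), one gets $|\wh{g_2^{\mathrm{loc}}}(\xi)|\le C|\xi|^{-(M-3/4)}$ on $U_{\pm\xi_0,\epsilon}$, which is the required decay. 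Combining the two pieces finishes the proof.

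The main obstacle is the location-far piece: one must use the Schwartz decay of the shearlet together with the anisotropic scaling of $M_{as}$ to extract \emph{simultaneously} a large negative power of $a$ (needed to beat both the $a^{-3}$ weight in the reproducing integral and the $a^{-3/4-k}$ growth coming from differentiating $\psi_{ast}$) and enough decay in $t$ to integrate over the unbounded region $\R^2\setminus B_{r_0}(x_0)$. The direction-far piece is by comparison elementary, being essentially a support/compactness argument on the frequency variable.
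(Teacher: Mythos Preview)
Your proof is correct and follows essentially the same two-piece decomposition as the paper: a direction-far part ($s\notin I_{s_0}$, $t\in\R^2$) and a location-far part ($s\in I_{s_0}$, $t\notin B_{r_0}(x_0)$). The two differences are in execution rather than strategy. For the direction-far piece the paper evaluates the $t$-integral exactly via the Fourier identity $\int_{\R^2}\wh{\psi_{ast}}(\xi)\mathcal{SH}_\psi(\varphi f)(a,s,t)\,dt=a^{3/2}|\wh\psi(M_{as}^T\xi)|^2\,\wh{\varphi f}(\xi)$, which immediately reduces matters to a bounded-support statement; you instead bound the $L^1$-norm in $t$ by Schwartz decay of $\psi$, which works but is less clean. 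For the location-far piece the paper invokes \cite[Lemma~5.2]{kutyniok2009resolution}, which gives the sharper bound $|\mathcal{SH}_\psi(\varphi f)(a,s,t)|\le C\,a^{1/4}(1+d(t,\supp\varphi)^2/a)^{-k}$, whereas you derive directly the cruder $a^{-3/4-k}(1+|t-x_0|/(c\sqrt a))^{-N}$; your bound has a bad prefactor in $a$, but since $N$ is free this is compensated after integrating in $t$, exactly as you note. Your argument has the advantage of being self-contained, at the cost of somewhat rougher bookkeeping.
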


\begin{proof}
	We decompose $([-2,2]\times \R^2)\bs N_{r_0}(s_0,x_0)=E_1\cup E_2$ with
	$E_1:=I_{s_0}^c \times \R^2$ and
	%\quad \mbox{and}\quad
	$E_2:=I_{s_0} \times B_{r_0}(x_0)^c$,
	where $I_{s_0}^c:=[-2,2]\bs I_{s_0}$ and $B_{r_0}(x_0)^c:=\R^2\bs B_{r_0}(x_0)$.
	We define $\wh{h_j}:=\wh{g_2}\chi_{E_j}$ for $j=1,2$.
	We first deal with $\int_{U_{\pm \xi_0,\epsilon}} |\wh{h_1}(\xi)|^2 |\xi|^{2m} d\xi$. Note that
	\[
	\widehat{h_1}(\xi)=
	\chi_{\cC}(\xi)\int_{a=0}^{a=1} \int_{s\in I_{s_0}^c}\int_{t\in \R^2} \widehat{\psi_{ast}}(\xi)\mathcal{SH}_{\psi}(\varphi f)(a,s,t)a^{-3} dtdsda.
	\]
	By \eqref{hatpsiast}, we have
	\[
	\int_{\R^2} \widehat{\psi_{ast}}(\xi)\mathcal{SH}_{\psi}(\varphi f)(a,s,t) dt
	%=a^{3/4}\wh{\psi}(M_{as}^T\xi)
	%\int_{\R^2} e^{-i2\pi t\cdot \xi}
	%\mathcal{SH}_{\psi}(\varphi f)(a,s,t) dt
	=a^{3/4} \wh{\psi}(M_{as}^T\xi)
	(\mathcal{SH}_\psi(\varphi f)(a,s,\cdot))^{\wedge}(\xi).
	\]
	Hence, by \eqref{hatSH}, we conclude that
	\[
	|\wh{h_1}(\xi)|\le \chi_{\cC}(\xi)
	\int_{a=0}^{a=1} \int_{s\in I_{s_0}^c}
	|\wh{\psi}(M_{as}^T\xi)|^2 |\wh{\varphi f}(\xi)| a^{-3/2} ds da.
	\]
	By \eqref{UW}, we have $U_{\pm \xi_0,\epsilon}\subseteq W_{s_0,r_0}$.
	For $s\in I_{s_0}^c$ (which implies $|s-s_0|> 2r_0$), then $\Xi(a,s)\cap W_{s_0,r_0}=\emptyset$ for $0<\sqrt{a}<|s-s_0|-r_0$. For $s\in I_{s_0}^c$, by definition we have $|s-s_0| > 2r_0$ which implies $|s-s_0|-r_0> r_0$.
	Then for $\xi\in U_{\pm \xi_0,\epsilon}$, we deduce from \eqref{supp:hatpsiast} that
	\[
	|\wh{h_1}(\xi)|\le \int_{a=r_0^2}^{a=1} \int_{s\in I_{s_0}^c}
	|\wh{\psi}(M_{as}^T\xi)|^2 |\wh{\varphi f}(\xi)| a^{-3/2} ds da.
	\]
	Since $0<r_0<1$, we see that
	\begin{equation}\label{Kr0}
		K_{r_0}:=\cup_{a\in [r_0^2,1], s\in [-2,2]} \Xi(a,s)
		\subseteq \{(\xi_1,\xi_2) \setsp
		\tfrac{1}{2}\le |\xi_1|\le \tfrac{2}{r_0^2}, |\tfrac{\xi_2}{\xi_1}|\le 3\}
	\end{equation}
	is a bounded set. Because $\wh{\varphi f}$ is continuous,
	we conclude that $C:=\sup_{\xi\in K_{r_0}}
	|\wh{\varphi f}(\xi)|<\infty$.
	So, for $\xi\in U_{\pm \xi_0,\epsilon}$, by \eqref{supp:hatpsiast}, we have
	\[
	|\wh{h_1}(\xi)|
	%\le C \int_{a=r_0^2}^{a=1} \int_{s\not\in I_{s_0}^c}
	%|\wh{\psi}(M_{as}^T\xi)|^2 a^{-3/2} dads
	\le C\| \wh{\psi}\|^2_{L^\infty} \int_{a=r_0^2}^{a=1} \int_{s\in I_{s_0}^c}
	\chi_{\Xi(a,s)}(\xi) a^{-3/2} ds da.
	\]
	Note that $\Xi(a,s)\subseteq K_{r_0}$ for all $a\in [r_0^2,1]$ and $s\in I_{s_0}^c$. Therefore, for $\xi\in U_{\pm \xi_0, \epsilon}$, by \eqref{Kr0}, we have
	\[
	|\wh{h_1}(\xi)|\le C\| \wh{\psi}\|^2_{L^\infty}
	\int_{a=r_0^2}^{a=1} \int_{s\in I_{s_0}^c}
	\chi_{K_{r_0}}(\xi) a^{-3/2} dsda
	\le C_1 \chi_{K_{r_0}}(\xi),
	\]
	where $C_1=8C\|\wh{\psi}\|^2_{L^\infty} (r_0^{-1}-1)$.
	Consequently, by $U_{\pm \xi_0,\epsilon}\subseteq W_{s_0,r_0}$ and \eqref{Kr0},
	\begin{align*}
		\int_{U_{\pm \xi_0,\epsilon}} |\wh{h_1}(\xi)|^2|\xi|^{2m} d\xi
		&\le C_1 \int_{U_{\pm \xi_0,\epsilon}\cap K_{r_0}} |\xi|^{2m} d\xi
		\le C_1 \int_{K_{r_0}}  |\xi|^{2m} d\xi\\
		&\le C_1 \int_{\theta=-\arctan 3}^{\theta=\arctan 3}
		 \int_{r=1/2}^{r=\frac{2\sqrt{1+3^2}}{r_0^2}} r^{2m+1} dr d\theta<\infty.
	\end{align*}
	Finally, we show  $\int_{U_{\pm\xi_0,\epsilon}}|\wh{h_2}(\xi)|^2|\xi|^{2m} d\xi<\infty$. First note that
	$$
	\widehat{h_2}(\xi)=
	 \chi_{\mathcal{C}}(\xi)\int_{a=0}^{a=1}\int_{s\in I_{s_0}}\int_{t\in B_{r_0}(x_0)^{c}}\widehat{\psi_{ast}}(\xi)\mathcal{SH}_{\psi}(\varphi f)(a,s,t)a^{-3}dt ds da.
	$$
	By \cite[Lemma~5.2]{kutyniok2009resolution}  for each $k>0$,
	there exists a positive constant $C_2$ such that
	\begin{equation} \label{lemma5.2}
		|\mathcal{SH}_{\psi}(\varphi f)(a,s,t)|=|\left\langle \varphi f , \psi_{ast}\right\rangle| \leq C_2 a^{\frac{1}{4}}\left(1+\frac{d(t,B_{r_0'}(x_0))^2}{a}\right)^{-k},
	\end{equation}
	for all $a\in (0,1]$, $s\in [-2,2]$ and $t\in \R^2$,
	where $\supp(\varphi)\subseteq B_{r_0'}(x_0)\subseteq B_{r_0}(x_0)\subseteq \Omega$ with $0<r_0'<r_0$. Using  \eqref{hatpsiast}, \eqref{supp:hatpsiast} and
	\eqref{lemma5.2}, we get
	\begin{align*}
		|&\widehat{h_2}(\xi)|\leq \chi_{\mathcal{C}}(\xi)\int_{a=0}^{a=1}\int_{s\in I_{s_0}}\int_{t\in B_{r_0}(x_0)^{c}}|\widehat{\psi_{ast}}(\xi)||\mathcal{SH}_{\psi}(\varphi f)(a,s,t)|a^{-3}dtdsda\\
		&\leq C_2 \|\wh{\psi}\|_{L^\infty}
		\int_{0}^{1}\int_{ I_{s_0}}\int_{ B_{r_0}(x_0)^{c}}
		 a^{\tfrac{3}{4}}\chi_{\Xi(a,s)}(\xi) a^{\tfrac{1}{4}}\left(1+\frac{d(t,B_{r_0'}(x_0))^2}{a}\right)^{-k}a^{-3}dtdsda\\
		&\leq C_2 \|\wh{\psi}\|_{L^\infty}
		\int_{0}^{1}\left(\int_{ I_{s_0}}\chi_{\Xi(a,s)}(\xi)ds\right)
		\left(\int_{B_{r_0}(x_0)^{c}}
		 \left(1+\frac{d(t,B_{r_0'}(x_0))^2}{a}\right)^{-k} dt\right) a^{-2} da.
	\end{align*}
	For $\xi\in \Xi(a,s), s\in [-2,2]$ and $a \in (0, 1)$, we observe
	\begin{equation} \label{est:Xi}
		\tfrac{1}{\sqrt{2}}\leq \tfrac{\sqrt{1+(|s|-\sqrt{a})^2}}{2}<a|\xi|<
		2\sqrt{1+(|s|+\sqrt{a})^2}\leq 2\sqrt{10}.
	\end{equation}
	For $a\in (0,1)$ such that $a\not \in [\frac{1}{\sqrt{2}|\xi|},\frac{2\sqrt{10}}{|\xi|}]$, then the above inequality implies
	$\xi\not \in \Xi(a,s)$ for all $s\in [-2,2]$, which leads to $\int_{ I_{s_0}}\chi_{\Xi(a,s)}(\xi)ds=0$.
	Note that $\xi\in \Xi(a,s)$ implies $\frac{\xi_2}{\xi_1}-\sqrt{a}\le s\le \frac{\xi_2}{\xi_1}+\sqrt{a}$.
	Consequently, we conclude that $\int_{ I_{s_0}}\chi_{\Xi(a,s)}(\xi)ds
	\le 2\sqrt{a}\chi_{[\frac{1}{\sqrt{2}|\xi|},\frac{2\sqrt{10}}{|\xi|}]}(a)$. Therefore, for $k>1$,
	\begin{align*}
		|\widehat{h_2}(\xi)|&\leq
		2C_2\|\wh{\psi}\|_{L^\infty}
		 \int_{a=\tfrac{1}{\sqrt{2}|\xi|}}
		 ^{a=\tfrac{2\sqrt{10}}{|\xi|}}\int_{t\in B_{r_0}(x_0)^{c}}\left(1+\frac{d(t,B_{r_0'}(x_0))^2}{a}\right)^{-k}a^{-\tfrac{3}{2}}dtda\\
		&= 4\pi C_2\|\wh{\psi}\|_{L^\infty}
		 \int_{a=\tfrac{1}{\sqrt{2}|\xi|}}^{a=\tfrac{2\sqrt{10}}{|\xi|}}
		\left(\int_{r=r_0-r_0'}^{\infty}
		\left(1+a^{-1}r^2\right)^{-k}r dr\right) a^{-\tfrac{3}{2}}
		da\\
		&=
		\frac{2\pi C_2\|\wh{\psi}\|_{L^\infty}}{k-1} \int_{a=\tfrac{1}{\sqrt{2}|\xi|}}^{a=\tfrac{2\sqrt{10}}{|\xi|}}a^{-\tfrac{1}{2}}\left(1+a^{-1}(r_0-r_0')^2\right)^{-k+1}da\\
		&
%\leq \textcolor{red}{C_3 |\xi|^{\tfrac{1}{2}}\left(1+|\xi|\sqrt{2}(r_0-r_0')^2\right)^{-k+1},} \textcolor{blue}{
\leq C_3 |\xi|^{-\tfrac{1}{2}}\left(1+\frac{|\xi|}{2\sqrt{10}}(r_0-r_0')^2\right)^{-k+1}, \end{align*}
	where $C_3:=\frac{(4\sqrt{5}-1)\pi C_2 \|\wh{\psi}\|_{L^\infty}}{2^{1/4}(k-1)}<\infty$ for $k>1$.
	Consequently, taking $k>\max(m+2,1)$, we conclude that $\int_{U_{\pm \xi_0,\epsilon}} |\wh{h_2}(\xi)|^2|\xi|^{2m} d\xi<\infty$.
\end{proof}

The same technique in the proof of Proposition~\ref{convg2} for estimating $g_2$  can be applied to estimate $g_4$ for the vertical shearlet coefficients as well.
We now estimate $\int_{U_{\pm \xi_0,\epsilon}}
|\widehat{g_1}(\xi)|^2|\xi|^{2m}d\xi$ as follows.
The same technique can be used to estimate
$g_3$.

\begin{prop}\label{microlocalsquareintegrability}
	There exists a positive constant $c$ such that
	\begin{equation}\label{g1xisquqre}	
		\int_{U_{\pm \xi_0,\epsilon}}
		|\widehat{g_1}(\xi)|^2|\xi|^{2m} d\xi
		\leq  c \int_{a=0}^{a=1}
		\int_{s\in I_{s_0}} \int_{t\in B_{r_0}(x_0)}
		\left|\mathcal{SH}_{\psi}(\varphi f)(a,s,t)\right|^2a^{-2m-3}dt dsda,
	\end{equation}
	where $s_0:=\tan(\mbox{Arg}(\xi_0))$ and $r_0$ is defined in \eqref{r0}.
\end{prop}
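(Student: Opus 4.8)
The plan is to view $g_1$ as a shearlet synthesis of the truncated coefficient field $F_{a,s}(t):=\mathcal{SH}_{\psi}(\varphi f)(a,s,t)\,\chi_{B_{r_0}(x_0)}(t)$ over the box $(0,1]\times I_{s_0}$, and to estimate its microlocal $H^m$-norm by a Cauchy--Schwarz argument in the scale and shear variables followed by Plancherel in $t$. First, since the factor $e^{-2\pi i t\cdot\xi}\,\wh{\psi}(M_{as}^T\xi)$ appearing in \eqref{hatpsiast} pulls out of the $t$-integral in \eqref{g1}, I would rewrite
\begin{equation*}
\wh{g_1}(\xi)=\chi_{\cC}(\xi)\int_{a=0}^{a=1}\int_{s\in I_{s_0}} a^{3/4}\,\wh{\psi}(M_{as}^T\xi)\,\widehat{F_{a,s}}(\xi)\,a^{-3}\,ds\,da,
\end{equation*}
where $\widehat{F_{a,s}}(\xi)=\int_{B_{r_0}(x_0)}e^{-2\pi i t\cdot\xi}\mathcal{SH}_{\psi}(\varphi f)(a,s,t)\,dt$, and note that $\int_{\R^2}|\widehat{F_{a,s}}(\xi)|^2\,d\xi=\int_{B_{r_0}(x_0)}|\mathcal{SH}_{\psi}(\varphi f)(a,s,t)|^2\,dt$ by Plancherel.

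The next step is to split the integrand (with respect to $ds\,da$) as $\big[a^{3/4}\wh{\psi}(M_{as}^T\xi)\,a^{-3/2}a^{m}\big]\cdot\big[\widehat{F_{a,s}}(\xi)\,a^{-3/2}a^{-m}\big]$ and apply Cauchy--Schwarz, which yields
\begin{equation*}
|\wh{g_1}(\xi)|^2\le G_\psi(\xi)\int_{a=0}^{a=1}\int_{s\in I_{s_0}}|\widehat{F_{a,s}}(\xi)|^2\,a^{-2m-3}\,ds\,da,\qquad G_\psi(\xi):=\int_{a=0}^{a=1}\int_{s\in I_{s_0}} a^{2m-3/2}\,|\wh{\psi}(M_{as}^T\xi)|^2\,ds\,da.
\end{equation*}
The heart of the argument is the pointwise bound $G_\psi(\xi)\le c\,|\xi|^{-2m}$ for $\xi\in\cC$. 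Here I would use $|\wh{\psi}(M_{as}^T\xi)|^2=|\wh{\psi_1}(a\xi_1)|^2\,|\wh{\psi_2}(a^{-1/2}(\tfrac{\xi_2}{\xi_1}-s))|^2$: the substitution $\sigma=a^{-1/2}(\tfrac{\xi_2}{\xi_1}-s)$ bounds the inner $s$-integral by $a^{1/2}\|\wh{\psi_2}\|_{L^2}^2=a^{1/2}$ using condition (C2), so $G_\psi(\xi)\le\int_0^1 a^{2m-1}|\wh{\psi_1}(a\xi_1)|^2\,da$; then the substitution $u=a|\xi_1|$ together with $\supp(\wh{\psi_1})\subseteq[-2,-\tfrac12]\cup[\tfrac12,2]$ gives $G_\psi(\xi)\le c_1|\xi_1|^{-2m}$ with $c_1:=\|\wh{\psi_1}\|_{L^\infty}^2\int_{1/2}^2 u^{2m-1}\,du<\infty$; and finally, on the horizontal cone $\cC$ of \eqref{hcone} one has $|\xi|/\sqrt2\le|\xi_1|\le|\xi|$, so $|\xi_1|^{-2m}\le\max(1,2^m)\,|\xi|^{-2m}$. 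Thus $G_\psi(\xi)|\xi|^{2m}\le c:=c_1\max(1,2^m)$ uniformly over $\xi\in\cC$.

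It then remains to combine these. Since $\wh{g_1}$ is supported in $\cC$,
\begin{equation*}
\int_{U_{\pm\xi_0,\epsilon}}|\wh{g_1}(\xi)|^2|\xi|^{2m}\,d\xi\le c\int_{U_{\pm\xi_0,\epsilon}\cap\cC}\Big(\int_{a=0}^{a=1}\int_{s\in I_{s_0}}|\widehat{F_{a,s}}(\xi)|^2\,a^{-2m-3}\,ds\,da\Big)\,d\xi,
\end{equation*}
and interchanging the order of integration (Tonelli; all integrands are nonnegative), bounding $\int_{U_{\pm\xi_0,\epsilon}\cap\cC}(\cdot)\,d\xi\le\int_{\R^2}(\cdot)\,d\xi$, and using the Plancherel identity above, one arrives at precisely \eqref{g1xisquqre}. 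The estimate for $g_3$ follows by the same argument with $\mathcal{SH}_{\psi}$ replaced by $\mathcal{SH}_{\psi}^{(v)}$ and $\cC$ by $\cC^v$. The one point requiring care is the bookkeeping of the powers of $a$ in the Cauchy--Schwarz split, arranged so that the $\widehat{F}$-factor carries exactly the weight $a^{-2m-3}$ while the $\wh{\psi}$-factor produces exactly $|\xi|^{-2m}$; the geometric fact $|\xi_1|\asymp|\xi|$ on $\cC$ is what converts the natural bound $|\xi_1|^{-2m}$ into $|\xi|^{-2m}$.
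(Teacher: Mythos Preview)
Your proof is correct and follows essentially the same strategy as the paper: rewrite $\wh{g_1}$ as a synthesis with the $t$-integral replaced by $\widehat{F_{a,s}}(\xi)$, apply Cauchy--Schwarz in the scale/shear variables, exploit the factorization $\wh{\psi}(M_{as}^T\xi)=\wh{\psi_1}(a\xi_1)\wh{\psi_2}(a^{-1/2}(\tfrac{\xi_2}{\xi_1}-s))$ and the support conditions (C1)--(C2), then finish with Tonelli and Plancherel.

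The only notable difference is organizational. The paper applies Cauchy--Schwarz twice---first in $a$ over the interval $J_\xi=[\tfrac{1}{2|\xi_1|},\tfrac{2}{|\xi_1|}]$ (producing the constant $\log 4$), then in $s$ over $J_{\xi,a}$---and afterwards invokes the pointwise bound $(a|\xi|)^{2m}\le\max(2^{-m},40^m)$ on $\Xi(a,s)$ from \eqref{est:Xi}. You instead apply Cauchy--Schwarz once jointly in $(a,s)$ with a weight split that isolates $a^{-2m-3}$ on the coefficient side, and then evaluate the resulting kernel $G_\psi(\xi)$ directly by the two substitutions $\sigma=a^{-1/2}(\tfrac{\xi_2}{\xi_1}-s)$ and $u=a|\xi_1|$. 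Your route is slightly more streamlined and makes the appearance of $|\xi_1|^{-2m}$ (and hence $|\xi|^{-2m}$ on $\cC$) more transparent; the paper's route keeps the localization to $J_\xi$ and $J_{\xi,a}$ explicit, which mirrors the geometry of $\supp(\wh{\psi_{ast}})\subseteq\Xi(a,s)$. Either way the constant $c$ depends only on $m$, $\|\wh{\psi_1}\|_{L^\infty}$, and $\|\wh{\psi_2}\|_{L^2}$.
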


\begin{proof}
	To prove \eqref{g1xisquqre},
	by the definition of $\wh{g_1}$ in \eqref{g1}, for $\xi \in U_{\pm \xi_0,\epsilon}$, we get
	\begin{small}
		\begin{align*}
			&\wh{g_1}(\xi)|\xi|^m
			=\chi_{\cC}(\xi)
			\int_{a=0}^{a=1}\int_{s\in I_{s_0}} \int_{t\in B_{r_0}(x_0)}
			 (a|\xi|)^m\widehat{\psi_{ast}}(\xi)\mathcal{SH}_{\psi}(\varphi f)(a,s,t)a^{-m-3} ds dt da\\
			&=\chi_{\cC}(\xi)
			\int_{a=0}^{a=1}\int_{s\in I_{s_0}}\int_{t\in B_{r_0}(x_0)}
			\mathcal{SH}_{\psi}(\varphi f)(a,s,t)a^{-m}e^{-2\pi i \xi\cdot t}(a|\xi|)^m\widehat{\psi_{as0}}(\xi)a^{-3} ds dt da\\	 &=\chi_{\mathcal{C}}(\xi)\int_{a=0}^{a=1}\left(\int_{s\in I_{s_0}}\wh{A}(a, s, \xi)(a|\xi|)^m\widehat{\psi_{as0}}(\xi) a^{-2}ds\right)\frac{da}{a},
		\end{align*}
	\end{small}
	where we define
	\begin{equation*}\label{Aasxi}
		\wh{A}(a,s,\xi):=\int_{t\in B_{r_0}(x_0)} \mathcal{SH}_{\psi}(\varphi f)(a,s,t)a^{-m} e^{-2\pi i \xi\cdot t}dt,
%=a^{-m}\left( \chi_{B_{r_0}(x_0)} \mathcal{SH}_{\psi}(\varphi f)(a,s,\cdot)\right)^\wedge.
	\end{equation*}
	which is just the Fourier transform of $\chi_{B_{r_0}(x_0)}(\cdot)\mathcal{SH}_{\psi}(\varphi f)(a,s,\cdot)a^{-m}$.
	Because the function $\wh{\psi_1}$ is supported inside $[-2,-\frac{1}{2}]\cup [\frac{1}{2},2]$, we observe that
	$$
	 \wh{\psi_1}(a\xi_1)=\wh{\psi_1}(a\xi_1)
	\chi_{J_\xi}(a)
	\quad \mbox{with}\quad
	J_{\xi}:=[\tfrac{1}{2|\xi_1|}, \tfrac{2}{|\xi_1|}].
	$$
	Using  \eqref{psidef} and \eqref{hatpsiast}, we obtain
	\begin{align*}
		\wh{g_1}(\xi)|\xi|^m &=\chi_{\mathcal{C}}(\xi)\int_{a=0}^{a=1}a^{-\frac{3}{4}}(a|\xi|)^m\wh{\psi_1}(a\xi_1)\chi_{J_\xi}(a)\wh{\mathcal{A}}(a,\xi)\frac{da}{a},
	\end{align*}
	where $\wh{\mathcal{A}}(a,\xi):=\int_{s\in I_{s_0}}\wh{A}(a,s,\xi)\wh{\psi_2}\left(a^{-\frac{1}{2}}\left(\frac{\xi_2}{\xi_1}-s\right)\right) a^{-\frac{1}{2}}ds$. Hence,
	for $\xi=(\xi_1,\xi_2)\in U_{\pm \xi_0,\epsilon}$,
	we get
	 \begin{eqnarray*}\label{cachyschwartz}
		|\wh{g_1}(\xi)|^2|\xi|^{2m} & \leq & \chi_{\mathcal{C}}(\xi)
		\left(\int_{a\in (0,1)\cap J_\xi}
		\left| a^{-\frac{3}{4}}
		(a|\xi|)^m\wh{\psi_1}(a\xi_1) \wh{\mathcal{A}}(a,\xi)\right|\frac{da}{a}\right)^2\nonumber \\
		&\leq & \chi_{\mathcal{C}}(\xi)\left(\int_{a\in (0,1)\cap J_\xi}
		\left|a^{-\frac{3}{4}}(a|\xi|)^m
		 \wh{\psi_1}(a\xi_1)\wh{\mathcal{A}}(a,\xi)\right|^2\frac{da}{a}\right)
		 \left(\int_{\frac{1}{2|\xi_1|}}^{\frac{2}{|\xi_1|}} \frac{da}{a}\right)\\
		&=& (\log 4)  \chi_{\mathcal{C}}(\xi)\left(\int_{a\in (0,1)\cap J_\xi}
		\left|a^{-\frac{3}{4}}(a|\xi|)^m
		\wh{\psi_1}(a\xi_1) \wh{\mathcal{A}}(a,\xi)\right|^2\frac{da}{a}\right).
	\end{eqnarray*}
	By $\supp(\wh{\psi_2})\subseteq [-1,1]$, defining
	$J_{\xi,a}:=\left[
	\frac{\xi_2}{\xi_1}-\sqrt{a}, \frac{\xi_2}{\xi_1}+\sqrt{a}\right]$,
	we observe that
	$$
	 \wh{\psi}_2\left(a^{-\frac{1}{2}}\left(\frac{\xi_2}{\xi_1}-s \right)\right)
	 =\wh{\psi}_2\left(a^{-\frac{1}{2}}\left(\frac{\xi_2}{\xi_1}-s \right)\right) \chi_{J_{\xi,a}}(s).
	$$
	Also by $\|\wh{\psi_2}\|_{L^2}=1$, we deduce that
	$$
	\int_{s\in I_{s_0}}
	 \left|\wh{\psi}_2\left(a^{-\frac{1}{2}}\left(\frac{\xi_2}{\xi_1}-s \right)\right) \right|^2 \frac{ds}{a}
	=a^{-\frac{1}{2}}\int_{s\in a^{-\frac{1}{2}}(\frac{\xi_2}{\xi_1}-I_{s_0})}
	|\wh{\psi_2}(s)|^2 ds\le a^{-\frac{1}{2}}.
	$$
	Using the  Cauchy-Schwartz inequality and the definition of $\wh{\mathcal{A}}(a,\xi)$, we have
	\begin{small}
		\begin{align*} \left|\wh{\mathcal{A}}(a,\xi)\right|^2
			&\leq \left(\int_{s\in I_{s_0}}\left|\wh{\psi}_2\left(a^{-\frac{1}{2}}\left(\frac{\xi_2}{\xi_1}-s \right)\right)\right|^2\frac{ds}{a}\right)\left(\int_{s\in I_{s_0}}\left|\wh{A}(a,s,\xi)\right|^2 \chi_{J_{\xi,a}}(s)ds\right) \\
			&\leq a^{-\frac{1}{2}}
			\int_{s\in I_{s_0}\cap J_{\xi,a}}\left|\wh{A}(a,s,\xi)\right|^2 ds.
		\end{align*}
	\end{small}
	Therefore, we proved
	{\small
		$$
		|\wh{g_1}(\xi)|^2|\xi|^{2m}
		\le (\log 4)  \chi_{\mathcal{C}}(\xi)
		 %\int_{\frac{1}{2|\xi_1|}}^{\frac{2}{|\xi_1|}}
		\int_{a\in (0,1)\cap J_\xi}
		\int_{s\in I_{s_0}\cap J_{\xi,a}}
		(a|\xi|)^{2m}
		|\wh{\psi_1}(a\xi_1)|^2
		|\wh{\mathcal{A}}(a,s,\xi)|^2 a^{-3} dsda.
		$$}
	Note that $|\wh{\psi_1}(a\xi_1)|^2 \chi_{J_{\xi,a}}(s)
	\le \|\wh{\psi_1}\|_{L^\infty}^2 \chi_{\Xi(a,s)}(\xi)$.
	Using the inequality in \eqref{est:Xi}, we have
	\begin{equation}\label{axi2m}
		(a|\xi|)^{2m}\le \max(2^{-m},40^{m}), \qquad \forall\,
		\xi\in \Xi(a,s), a\in (0,1), s\in [-2,2].
	\end{equation}
	%
	
	%Noting $\xi\in \mathcal{C}$ implies $|\xi_1|\ge 2$ and
	Putting all the above estimates together we obtain the following estimate
	%$$
	 %a^{-\frac{3}{4}}(a|\xi|)^m|\wh{\psi_1}(a\xi_1)|\leq C|\xi_1|^{\frac{3}{4}}\ \ \textrm{for}\ \ a\in \left[\frac{1}{2|\xi_1|},\frac{2}{|\xi_1|}\right],$$
	%for some constant $C=2^{-\tfrac{3}{4}}C_{m}\|\wh{\psi_1}\|_{L^\infty}$.
	%Putting the above estimates together, we have following estimate:
	\begin{small}
		\begin{align*}
			\int_{U_{\pm \xi_0,\epsilon}}
			|\wh{g_1}(\xi)|^2|\xi|^{2m} d\xi &
			\leq c \int_{\xi \in U_{\pm \xi_0,\epsilon}\cap \mathcal{C}}
			\int_{a\in (0,1)\cap J_\xi}
			\int_{s\in I_{s_0}\cap J_{\xi,a}}
			|\wh{A}(a,s,\xi)|^2  a^{-3} ds da d\xi\\
			%& \le C_1\int_{\xi \in \R^2}\int_{a\in (0,1)}
			%\int_{s\in I_{s_0}}|\wh{A}(a,s,\xi)\right|^2  a^{-3} ds da d\xi\\
			&\le c
			\int_{a=0}^{a=1}\int_{s\in I_{s_0}}
			\left( \int_{\xi \in \R^2}
			|\wh{A}(a,s,\xi)|^2 d\xi \right) a^{-3} ds da\\
			 %&=C_1\int_{a=0}^{a=1}\int_{s\in I_{s_0}}
			% \left( \int_{t \in \R^2} |\chi_{B_{r_0}(x_0)}(t)
			% |\mathcal{SH}_\psi(\varphi f)(a,s,t)(a,s,t)a^{-m}|^2 dt
			%  \right) a^{-3} ds da\\
			&=c \int_{a=0}^{a=1} \int_{s\in I_{s_0}}
			\int_{t \in B_{r_0}(x_0)}
			|\mathcal{SH}_\psi(\varphi f)(a,s,t)(a,s,t)|^2 a^{-2m-3} dt ds da,
		\end{align*}
	\end{small}
	where $c:=(\log 4)\|\wh{\psi_1}\|_{L^\infty}^2 \max(2^{-m}, 40^m)$ and we noticed that $\wh{A}(a,s,\xi)$ is the Fourier transform of $\chi_{B_{r_0}(x_0)}(\cdot) \mathcal{SH}_\psi(\varphi f)(a,s,\cdot) a^{-m}$.
	This proves \eqref{g1xisquqre}.
\end{proof}

To prove Theorem~\ref{thm:swf:integral}, we need the following two auxiliary results.

\begin{lemma}\label{lem:nicepart}
	Let $f\in \mathcal{D}'(\Omega)$ be a distribution and $x_0\in \Omega$.
	For any $r_0>0$ and a bounded interval $I_{s_0}$ of $[-2,2]$ such that $B_{r_0}(x_0)\subseteq \Omega$,
	if  $\varphi\in \DCO$ satisfies $\supp(\varphi)\subseteq B_{r_0}(x_0)$,
	then
	\begin{equation}\label{est:int1}
		\int_{a=0}^{a=1}
		 \int_{s=s_0-r_0}^{s=s_0+r_0}\int_{t\in B_{r_0}(x_0)}\left|\left\langle (1-\varphi) f, \psi_{ast} \right\rangle\right|^2a^{-2m-3}dt dsda<\infty.
	\end{equation}
	Consequently,  for $\varphi\in \DCO$ with $\supp(\varphi)\subseteq B_{r_0}(x_0)$,
	the following two integrals
	$$
	\int_{a=0}^{a=1}\int_{s\in I_{s_0}}\int_{t\in B_{r_0}(x_0)}
	\left|\mathcal{SH}_{\psi}(\varphi f)(a,s,t)\right|^2a^{-2m-3}dt ds da
	$$
	and
	$$
	\int_{a=0}^{a=1}
	\int_{s\in I_{s_0}}\int_{t\in B_{r_0}(x_0)} \left|\mathcal{SH}_{\psi}f(a,s,t)\right|^2a^{-2m-3}dt ds da
	$$
	are either both finite (i.e., convergent) or both infinite (i.e., divergent).
\end{lemma}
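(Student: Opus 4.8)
The plan is to prove \eqref{est:int1} by showing that, uniformly for $t\in B_{r_0}(x_0)$ and $s$ in a bounded set, the coefficient $\langle(1-\varphi)f,\psi_{ast}\rangle$ decays faster than every power of $a$ as $a\to0^{+}$; granting this, \eqref{est:int1} follows at once because $\int_0^1 a^{2N-2m-3}\,da<\infty$ once $N>m+1$. The mechanism is that $(1-\varphi)f$ vanishes near $\overline{B_{r_0}(x_0)}$ --- the effective content of $\operatorname{supp}\varphi\subseteq B_{r_0}(x_0)$ together with $\varphi\equiv1$ near $x_0$ --- so its support keeps a fixed distance $\delta>0$ from every admissible $t$, while $\psi_{ast}=a^{-3/4}\psi(M_{as}^{-1}(\cdot-t))$ is concentrated near $t$ at the anisotropic scale $a\times\sqrt a$.

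First I would record the elementary estimate $|M_{as}^{-1}v|^{2}\ge c_0\,a^{-1}|v|^{2}$ for all $v\in\R^{2}$, all $a\in(0,1]$, and all $s$ in any prescribed bounded interval: since $M_{as}^{-1}=\left[\begin{smallmatrix}1/a & s/a\\ 0 & 1/\sqrt a\end{smallmatrix}\right]$ one has $|M_{as}^{-1}v|^{2}=(v_1+sv_2)^2 a^{-2}+v_2^2 a^{-1}$, and the claim follows by treating separately $|v_2|\ge|v|/K$ and $|v_2|<|v|/K$ for a sufficiently large fixed $K$. Combined with the Schwartz decay of $\psi$ and its derivatives (each derivative of $\psi(M_{as}^{-1}(\cdot-t))$ costing at most a factor $a^{-1}$ from the entries of $M_{as}^{-1}$), this gives, for every $N$ and multi-index $\alpha$,
\[
\big|\partial^{\alpha}\!\big[\psi(M_{as}^{-1}(\cdot-t))\big](x)\big|\le C_{N,\alpha}\,a^{-|\alpha|}\big(1+c_0 a^{-1}|x-t|^{2}\big)^{-N},\qquad x\in\R^{2}.
\]
One may assume, by the convention stated after Theorem~\ref{thm:swf:integral} (replacing $f$ by a localization if necessary), that $f$ is a compactly supported distribution, hence of some finite order $N_0$; pairing $(1-\varphi)f$ against $\psi_{ast}$ and using $|x-t|\ge\delta$ on $\operatorname{supp}\big((1-\varphi)f\big)$ then yields $|\langle(1-\varphi)f,\psi_{ast}\rangle|\le C_N\,a^{-3/4-N_0}(1+c_0\delta^{2}a^{-1})^{-N}\le C_N'\,a^{N}$ for every $N\in\N$, uniformly in $s\in[s_0-r_0,s_0+r_0]$ and $t\in B_{r_0}(x_0)$. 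Substituting this into the triple integral and choosing $N>m+1$ proves \eqref{est:int1}.

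The step I expect to be the main obstacle is making this bound genuinely uniform in $(s,t)$: uniformity in $s$ is harmless since $s$ ranges over a bounded set, but uniformity in $t$ forces the distance from $t$ to $\ssupp\big((1-\varphi)f\big)$ to be bounded below over the whole ball $B_{r_0}(x_0)$, which is precisely why $(1-\varphi)f$ must be smooth (indeed, vanish) on a neighbourhood of $\overline{B_{r_0}(x_0)}$. If instead one starts only from $\operatorname{supp}\varphi\subseteq B_{r_0}(x_0)$ with $\varphi\equiv1$ merely near $x_0$, the same argument is run on a slightly smaller ball, and the contribution of the remaining thin collar of $t$'s lying within $O(\sqrt a)$ of $\ssupp\big((1-\varphi)f\big)$ is absorbed using the extra factor $a^{1/4}$ furnished by the Kutyniok--Labate estimate \eqref{lemma5.2}.

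Finally, the ``consequently'' assertion is formal. Pointwise $\langle f,\psi_{ast}\rangle=\langle\varphi f,\psi_{ast}\rangle+\langle(1-\varphi)f,\psi_{ast}\rangle$, so in the weighted Hilbert space $L^{2}\big((0,1)\times I_{s_0}\times B_{r_0}(x_0),\,a^{-2m-3}\,dt\,ds\,da\big)$ the functions $\mathcal{SH}_{\psi}f$ and $\mathcal{SH}_{\psi}(\varphi f)$ differ by $\mathcal{SH}_{\psi}\big((1-\varphi)f\big)$, whose norm is finite by \eqref{est:int1} (the proof above works verbatim with the interval $[s_0-r_0,s_0+r_0]$ replaced by the bounded interval $I_{s_0}$). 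The triangle inequality then shows that the norm of $\mathcal{SH}_{\psi}f$ is finite if and only if the norm of $\mathcal{SH}_{\psi}(\varphi f)$ is, i.e. the two displayed integrals converge or diverge together.
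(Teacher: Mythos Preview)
Your overall strategy matches the paper's: establish a uniform rapid-decay bound $|\langle(1-\varphi)f,\psi_{ast}\rangle|=O(a^{N})$ for $t\in B_{r_0}(x_0)$ from the spatial separation between $t$ and $\supp\big((1-\varphi)f\big)$, integrate, and deduce the ``consequently'' from the splitting $\langle f,\psi_{ast}\rangle=\langle\varphi f,\psi_{ast}\rangle+\langle(1-\varphi)f,\psi_{ast}\rangle$ together with $(b+c)^{2}\le 2b^{2}+2c^{2}$. The paper simply quotes \eqref{lemma5.2} for the decay, whereas you re-derive it from Schwartz decay of $\psi$, the finite order of $f$, and the lower bound $|M_{as}^{-1}v|^{2}\ge c_{0}a^{-1}|v|^{2}$.

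There is, however, a geometric misreading. You assert that ``$(1-\varphi)f$ vanishes near $\overline{B_{r_0}(x_0)}$'' follows from $\supp\varphi\subseteq B_{r_0}(x_0)$ and $\varphi\equiv1$ near $x_0$. This is backwards: $\supp\varphi\subseteq B_{r_0}(x_0)$ forces $\varphi=0$ \emph{outside} the ball, so $(1-\varphi)f=f$ there and $\supp\big((1-\varphi)f\big)$ can touch $\partial B_{r_0}(x_0)$. Your uniform bound $|x-t|\ge\delta$ then fails for $t$ near the boundary, and the ``thin collar'' patch does not save it: that collar has fixed width (not $O(\sqrt a)$), and on it \eqref{lemma5.2} yields only $O(a^{1/4})$, which is not integrable against $a^{-2m-3}$ once $m\ge-3/4$. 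In fact the lemma as printed is false---take $\varphi\equiv0$, $f=\delta_{x_0}$, $m\ge-1$, and compare Example~\ref{example 1}. The paper's own proof shares this inconsistency (its $\eta=\mathrm{dist}\big(B_{r_0}(x_0),(\supp\varphi)^{c}\big)$ vanishes under the printed hypothesis), so the stated containment is evidently a misprint for the opposite one: what is actually needed, and what holds in every application (see the proof of Theorem~\ref{thm:swf:integral}), is that $\varphi\equiv1$ on a neighbourhood of $\overline{B_{r_0}(x_0)}$. Under that corrected hypothesis your direct argument is valid and essentially identical to the paper's.
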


\begin{proof}
	By \cite[Lemma~5.2]{kutyniok2009resolution}, for each $k>0$, there exists a positive constant $C$ depending on $k$ and satisfying
	\begin{equation}\label{est:decayk}
		|\langle (1-\varphi) f, \psi_{ast} \rangle| \leq C a^{\frac{1}{4}}\left(1+\tfrac{\eta^2}{a}\right)^{-k}
		=C a^{k+\frac{1}{4}} (a+\eta^2)^{-k}
		\le C \eta^{-2k} a^{k+\frac{1}{4}}
	\end{equation}
	for all $a\in (0,1)$, $s\in [-2,2]$ and $t\in \R^2$, where $\eta=\mbox{dist}(B_{r_0}(x_0),(\supp \varphi)^c)>0$.
	Hence, we get
	$$
	\int_{a=0}^{a=1}
	 \int_{s=s_0-r_0}^{s=s_0+r_0}\int_{t\in B_{r_0}(x_0)}\left|\left\langle (1-\varphi) f, \psi_{ast} \right\rangle\right|^2 a^{-2m-3}dtdsda\nonumber
	\leq  \frac{C^2}{\eta^{4k}} \int_{0}^{1}a^{2k-2m-\frac{5}{2}} da.
	$$
	Note that the  above last integral $\int_{0}^{1}a^{2k-2m-\frac{5}{2}} da<\infty$ if and only if   $k>m+\frac{3}{4}$.
	Taking $k>\max(0, m+\frac{3}{4})$,
	we conclude that the integral in \eqref{est:int1} is finite.
	
	On the other hand, it is trivial that
	\begin{equation} \label{SH:decomp}
		\mathcal{SH}_{\psi}f(a,s,t)=
		\langle f, \psi_{ast} \rangle
		=\langle \varphi f, \psi_{ast} \rangle+\langle (1-\varphi)f, \psi_{ast} \rangle.
	\end{equation}
	Hence, using the inequality $(b+c)^2\le 2b^2+2c^2$ for all $b,c\in \R$, we have
	$$
	|\langle f, \psi_{ast} \rangle|^2\le 2|\langle \varphi f, \psi_{ast} \rangle|^2+
	2 |\langle (1-\varphi)f, \psi_{ast} \rangle|^2
	$$
	and
	$$
	|\langle \varphi f, \psi_{ast} \rangle|^2\le 2|\langle f, \psi_{ast} \rangle|^2+
	2 |\langle (1-\varphi)f, \psi_{ast} \rangle|^2.
	$$
	Consequently, by \eqref{est:int1},
	the two integrals are either both finite or both infinite.
\end{proof}

\begin{lemma}\label{lem:sh:a>r0}
	Let $f$ be a tempered distribution on $\R^2$.
	For $x_0\in \R^2$ and $s_0\in \R$,
	\begin{equation}\label{sh:a>r0}
		\int_{a=a_0}^{a=1}
		\int_{s=s_0-r_0}^{s=s_0+r_0}
		\int_{t\in B_{r_0}(x_0)} \left|\mathcal{SH}_{\psi}f(a,s,t)\right|^2a^{-2m-3}dt ds da<\infty
	\end{equation}
	for all $a_0>0$, $r_0>0$ and $m\in \R$.
\end{lemma}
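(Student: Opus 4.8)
The plan is to exploit the fact that once the scale parameter $a$ is bounded away from $0$ and the shear $s$ and location $t$ stay in a bounded set, the shearlet atoms $\psi_{ast}$ form a \emph{bounded family in the Schwartz space} $\mathcal{S}(\R^2)$, with an at most polynomial dependence on $t$, and then to invoke the continuity of $f$ as a tempered distribution. First I would record uniform bounds on the matrices: for $(a,s)\in[a_0,1]\times[s_0-r_0,s_0+r_0]$ the entries of $M_{as}=\begin{bmatrix}a&-\sqrt a\,s\\0&\sqrt a\end{bmatrix}$ and of $M_{as}^{-1}=\begin{bmatrix}a^{-1}&a^{-1}s\\0&a^{-1/2}\end{bmatrix}$ are all bounded by a constant $C_0=C_0(a_0,s_0,r_0)\ge 1$, one has $a^{-3/4}\le a_0^{-3/4}$, and, $M_{as}$ being invertible with $\|M_{as}\|\le C_0$, also $|M_{as}^{-1}y|\ge C_0^{-1}|y|$ for every $y\in\R^2$.

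Next I would estimate the Schwartz seminorms of $\psi_{ast}$. Fix $k\in\NN$ and a multi-index $\beta$. By the chain rule, $\partial^\beta\psi_{ast}=a^{-3/4}\sum_{|\gamma|=|\beta|}c_{\beta\gamma}(M_{as}^{-1})\,(\partial^\gamma\psi)(M_{as}^{-1}(\cdot-t))$, where the coefficients $c_{\beta\gamma}$ are polynomials in the (bounded) entries of $M_{as}^{-1}$. Since each $\partial^\gamma\psi$ is Schwartz, $|(\partial^\gamma\psi)(y)|\le C_{k,\gamma}(1+|y|)^{-k}$; substituting $y=M_{as}^{-1}(x-t)$, using $|y|\ge C_0^{-1}|x-t|$ and then $1+|x|\le(1+|x-t|)(1+|t|)$, I obtain
\[
\sup_{x\in\R^2}(1+|x|)^k\,|\partial^\beta\psi_{ast}(x)|\le C_{k,\beta}\,(1+|t|)^k,
\]
with $C_{k,\beta}$ depending only on $k,\beta,a_0,s_0,r_0$ and $\psi$. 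Because $f$ is a tempered distribution, there exist $N\in\NN$ and $C_f>0$ such that $|\langle f,\varphi\rangle|\le C_f\max_{|\beta|\le N}\sup_{x}(1+|x|)^N|\partial^\beta\varphi(x)|$ for all $\varphi\in\mathcal{S}(\R^2)$; applying this with $\varphi=\psi_{ast}$ yields $|\mathcal{SH}_\psi f(a,s,t)|\le C(1+|t|)^N$ uniformly for $(a,s)\in[a_0,1]\times[s_0-r_0,s_0+r_0]$, hence $|\mathcal{SH}_\psi f(a,s,t)|\le C(1+|x_0|+r_0)^N=:C'$ for all such $(a,s)$ and all $t\in B_{r_0}(x_0)$.

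Finally I would integrate: the left-hand side of \eqref{sh:a>r0} is bounded by $C'^2\cdot(\pi r_0^2)\cdot(2r_0)\cdot\int_{a_0}^{1}a^{-2m-3}\,da$, and $\int_{a_0}^{1}a^{-2m-3}\,da<\infty$ for every $m\in\R$ since $a_0>0$ keeps the integrand continuous on $[a_0,1]$. This proves the lemma. I do not anticipate any genuine obstacle; the only step requiring a little care is making the constant in the Schwartz-seminorm bound for $\psi_{ast}$ genuinely uniform over the compact parameter set $[a_0,1]\times[s_0-r_0,s_0+r_0]$, which is exactly the role of the first step.
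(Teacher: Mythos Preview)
Your proof is correct. You work entirely on the spatial side: for $(a,s)$ in the compact rectangle $[a_0,1]\times[s_0-r_0,s_0+r_0]$ the matrices $M_{as}$ and $M_{as}^{-1}$ have uniformly bounded entries, so the family $\{\psi_{ast}\}$ is bounded in every Schwartz seminorm (with a harmless polynomial factor in $t$), and the continuity estimate for the tempered distribution $f$ then yields a uniform bound on $|\mathcal{SH}_\psi f(a,s,t)|$ over the compact $(s,t)$-set. The final $a$-integral is finite because $a_0>0$.

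The paper proceeds differently, via the frequency side. It observes that for all $(a,s)$ in the compact range the support $\Xi(a,s)$ of $\wh{\psi_{ast}}$ lies in a fixed ball $B_R(0)$, introduces a cutoff $\eta\in\mathcal{D}(\R^2)$ with $\eta=1$ on $B_{R+1}(0)$, and bounds $|\langle f,\psi_{ast}\rangle|$ by $Ca^{3/4}$ with $C=\|\wh\psi\|_{L^\infty}\sup_{|\xi|\le R}|\wh{\eta f}(\xi)|$. Thus the paper leans on the band-limited nature of the shearlet to reduce matters to a compact frequency region, whereas your argument uses nothing about $\supp\wh\psi$ and would go through verbatim for any Schwartz generator; in return, the paper's bound has the explicit $a^{3/4}$ factor coming from $\wh{\psi_{ast}}=a^{3/4}e^{-2\pi i t\cdot\xi}\wh\psi(M_{as}^T\xi)$, which you do not need here but which reflects the structure used elsewhere in the paper. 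Either route closes the lemma.
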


\begin{proof}
	For all $a\in [a_0,1)$ and $s\in (s_0-r_0,s_0+r_0)$, the support $\Xi(a,s)$ of $\wh{\psi_{ast}}$ is contained inside a finite ball $B_R(0)$ for some $0<R<\infty$.
	Take $\eta\in \mathcal{D}(\R^2)$ such that $\eta=1$ on $B_{R+1}(0)$.
	Therefore,
	$$
	|\langle f, \psi_{ast}\rangle|=
	|\langle \eta f, \psi_{ast}\rangle|=
	|\langle \wh{\eta f}, \wh{\psi_{ast}}\rangle|
	\le C a^{3/4}
	\quad \mbox{with}\quad
	C:=\|\wh{\psi}\|_{L^\infty} \sup_{|\xi|\le R} |\wh{\eta f}(\xi)|<\infty,
	$$
	since $\eta f$ is a compactly supported distribution and hence $\wh{\eta f}$ is continuous.
	Therefore,
	\begin{align*}
		\int_{a=a_0}^{a=1}
		&\int_{s=s_0-r_0}^{s=s_0+r_0}
		\int_{t\in B_{r_0}(x_0)}
		|\langle f, \psi_{ast}\rangle|^2 a^{-2m-3} dtds da\\
		&\le
		C^2 \int_{a=a_0}^{a=1} \int_{s=s_0-r_0}^{s=s_0+r_0}
		\int_{t\in B_{r_0}(x_0)} a^{-2m-3/2} dt ds da
		<\infty,
	\end{align*}
	since $\int_{a=a_0}^{a=1} a^{-2m-3/2}da<\infty$ due to $a_0>0$.
	This proves \eqref{sh:a>r0}.
\end{proof}

We are now ready to prove our main results in Theorems~\ref{thm:swf:integral} and~\ref{thm:swf:decay}.

\begin{proof}[Proof of Theorem~\ref{thm:swf:integral}]
	Sufficiency. Suppose that \eqref{squareintegrabilityofshearlet}  or \eqref{1squareintegrabilityofshearlet} holds at $(x_0, \xi_0)$ for some $r_0>0$.
	Since $s_0:=\tan(\mbox{Arg}(\xi_0))$, without loss of generality, we assume $|s_0|<2$ and \eqref{squareintegrabilityofshearlet} holds.
	By Proposition~\ref{convg2}, Proposition~\ref{microlocalsquareintegrability} and Lemma~\ref{lem:nicepart},
	we deduce from \eqref{decom} that
	\eqref{sobolevwf:2} holds. Hence, by Lemma~\ref{lem:swf:def}, $(x_0,\pm \xi_0)\not\in \wf_m(f)$.
	
	Necessity. Suppose that $(x_0, \pm \xi_0)\not\in \wf_m(f)$.
	Note that $s_0:=\tan(\mbox{Arg}(\xi_0))$.
	Without loss of generality, we assume $|s_0|<2$.
	We have to prove
	 \eqref{squareintegrabilityofshearlet}.
	To do so, we first prove that
	\begin{equation}\label{est:int2}
		\int_{a=0}^{a=1}
		 \int_{s=s_0-r_0}^{s=s_0+r_0}\int_{t\in B_{r_0}(x_0)}\left|\left\langle \varphi f, \psi_{ast}
		 \right\rangle\right|^2a^{-2m-3}dtdsda<\infty.
	\end{equation}
	Since $(x_0,\pm \xi_0)\not \in \wf_m(f)$, by Lemma~\ref{lem:swf:def},
	there exists $\epsilon>0$ such that
	\eqref{sobolevwf:2} holds, that is,
	$C:=\int_{U} |\wh{\varphi f}(\xi)|^2 |\xi|^{2m} d\xi<\infty$, where
	$U:=U_{\xi_0,\epsilon} \cup (-U_{\xi_0,\epsilon})$.
	To prove \eqref{est:int1}, using the relation $\supp(\wh{\psi_{ast}})\subseteq \Xi(a,s)$ in \eqref{supp:hatpsiast}, we observe
	$$
	\langle \varphi f, \psi_{ast}\rangle=
	\langle \wh{\varphi f}, \wh{\psi_{ast}}\rangle
	=\int_{\R^2} \wh{\varphi f}(\xi) \overline{\wh{\psi_{ast}}(\xi)} d\xi=
	\int_{\Xi(a,s)} \wh{\varphi f}(\xi) \overline{\wh{\psi_{ast}}(\xi)} d\xi.
	$$
Using the definition of $\Xi(a,s)$ in \eqref{xias} and $s_0:=\tan(\mbox{Arg}(\xi_0))$,
	we observe that there exist sufficiently small $r_0>0$ and $a_0\in (0,1)$ such that
	the set
	$\Xi(a,s)$ must be contained inside $U$ for all $a\in (0,a_0)$ and $s\in (s_0-r_0,s_0+r_0)$.
Therefore, for all $a\in (0,a_0)$ and $s\in (s_0-r_0,s_0+r_0)$,
	using $\wh{\psi_{ast}}(\xi)=a^{3/4} e^{-i2\pi t\cdot\xi} \wh{\psi}(M_{as}^T\xi)$,
	we have
	\begin{align*}
		|\langle \varphi f, \psi_{ast}\rangle|^2
		&=\left|\int_{\Xi(a,s)} \wh{\varphi f}(\xi) \overline{\wh{\psi_{ast}}(\xi)} d\xi\right|^2
		\le a^{3/2} \left(\int_{U}
		|\wh{\varphi f}(\xi) \wh{\psi}(M_{as}^T\xi)| d\xi\right)^2\\
		&\le a^{3/2} \left(\int_{U}
		|\wh{\varphi f}(\xi)|^2 |\xi|^{2m} d\xi \right)
		\left(\int_U |\wh{\psi}(M_{as}^T\xi)|^2|\xi|^{-2m} d\xi\right).
	\end{align*}
	Since $U\subseteq \Xi(a,s)$ and $\|\wh{\psi}\|_{L^2}=1$, using \eqref{axi2m} with $m$ being replaced by $-m$, we have
	\begin{align*}
		|\langle \varphi f, \psi_{ast}\rangle|^2
		&\le C a^{2m+3/2} \int_{U} |\wh{\psi}(M_{as}^T\xi)|^2(a|\xi|)^{-2m} d\xi\\
		&\le C_1 a^{2m+3/2} \int_U |\wh{\psi}(M_{as}^T\xi)|^2 d\xi
		\le C_1 a^{2m+3} \int_{\R^2} |\wh{\psi}(\xi)|^2 d\xi=
		C_1 a^{2m+3},
	\end{align*}
	where $C_1:=C\max(2^{m},40^{-m})$.
	Therefore, we conclude that
	\begin{align*}
		\int_{a=0}^{a=a_0}
		&\int_{s=s_0-r_0}^{s=s_0+r_0}
		\int_{t\in B_{r_0}(x_0)}
		|\langle \varphi f, \psi_{ast}\rangle|^2 a^{-2m-3} dtds da\\
		&\le
		C_1\int_{a=0}^{a=a_0} \int_{s=s_0-r_0}^{s=s_0+r_0}
		\int_{t\in B_{r_0}(x_0)} dtdsda<\infty.
	\end{align*}
	Since $a_0>0$ and $r_0>0$,  the inequality \eqref{sh:a>r0} in Lemma~\ref{lem:sh:a>r0} must hold with $f$ being replaced by $\varphi f$.
	Consequently, the integral in \eqref{est:int2} is finite.
	Using Lemma~\ref{lem:nicepart} and \eqref{est:int2}, we conclude that
	the integral in \eqref{squareintegrabilityofshearlet}
	is finite.
\end{proof}

\begin{proof}[Proof of Theorem~\ref{thm:swf:decay}]
	Let $(x_0, (r,rs_0))\in \Lambda_1(m+1+\epsilon)$ with $s_0\in (-2,2)$ and $r>0$. By the definition of the set $\Lambda_1(m)$ in Theorem~\ref{thm:swf:decay}, there exist $0<r_0<1$ and $C>0$ such that \eqref{cst:decay} holds with $k:=m+1+\epsilon$.
	Therefore, by \eqref{cst:decay}, we have
	\begin{align*}
		\int_{a=0}^{a=r_0}
		 \int_{s=s_0-r_0}^{s=s_0+r_0}\int_{t\in B_{r_0}(x_0)}&\left|\mathcal{SH}_{\psi}f(a,s,t)\right|^2a^{-2m}\frac{da}{a^3}\ ds\ dt\\
		& \leq  \int_{a=0}^{a=r_0}
		 \int_{s=s_0-r_0}^{s=s_0+r_0}\int_{t\in B_{r_0}(x_0)}
		C^2 a^{2(m+1+\epsilon)} a^{-2m-3}dt dsda\\
		&= C^2 \pi r_0^3  \epsilon^{-1} r_0^{2\epsilon}
		<\infty.
	\end{align*}
	
	Since $r_0>0$,  the inequality \eqref{sh:a>r0} in Lemma~\ref{lem:sh:a>r0} must hold with $a_0=r_0$. This shows that \eqref{squareintegrabilityofshearlet} holds. By Theorem~\ref{thm:swf:integral}, we conclude that $(x_0, (r,rs_0))\not \in \wf_{m}(f)$. This proves $\Lambda_1(m+1+\epsilon)\subseteq \wf_m(f)^c$. Similarly, we have $\Lambda_2(m+1+\epsilon)\subseteq \wf_m(f)^c$. This proves  \eqref{wfm:cst:decay}.
	
	Note that $\Lambda_j(n)\subseteq \Lambda_j(m)$ for all $m<n$.
	To prove \eqref{wfm:cst:decay:2},
	by Proposition~\ref{sobolevineq} and \eqref{wfm:cst:decay} with $\epsilon=1$,
	we have
	\begin{align*}
		 \wf(f)^c=\left(\bigcap\limits_{k\in \N}(\wf_k(f))^c\right)^{\mathrm{o}}   \supseteq  \left(\bigcap\limits_{k\in \N}\Lambda_1(k+2)\right)^{\mathrm{o}}
		= \left(\bigcap\limits_{k\in \N}\Lambda_1(k)\right)^{\mathrm{o}}  =\Lambda_1(\infty).
	\end{align*}
	Similarly, we can prove $\Lambda_2(\infty)\subseteq \wf(f)^c$.
	Therefore, $\Lambda_1(\infty)\cup \Lambda_2(\infty)\subseteq \wf(f)^c$.
	
	Conversely, let $(x_0, (r,rs_0))\in \wf(f)^c$. Without loss of generality, we assume $|s_0|<2$.
	We use the decomposition in \eqref{SH:decomp}. Then we proved in
	the proof of Lemma~\ref{lem:nicepart}
	that  \eqref{est:decayk} holds for all $k>0$.
	Hence, $|\mathcal{SH}_\psi((1-\varphi)f)(a,s,t)|=
	|\langle (1-\varphi) f, \psi_{ast} \rangle|=\mathcal{O}(a^k)$ as $a\to 0^+$ uniformly when $(s,t)$ is near $(x_0,s_0)$ for each $k>0$.
	
	On the other hand, since $(x_0, (r,rs_0))\in \wf(f)^c$, by Definition~\ref{cinfinitywavefront}, \eqref{cinfinitydefn} holds with $\xi_0:=(r, rs_0)$. Then there exists $r_0>0$ such that $(r,rs)\in V_{\xi_0,\epsilon}$ for all $s\in (s_0-2r_0,s_0+2r_0)$.
	Hence, for $a\in (0,1)$ and $|s-s_0|<r_0$, we have $\Xi(a,s)\subseteq U_{\xi_0,\epsilon}=:U$.
	As we argued in the proof of Theorem~\ref{thm:swf:integral},
	we have
	\begin{align*}
		|\langle \varphi f, \psi_{ast}\rangle|^2
		&=\left|\int_{\Xi(a,s)} \wh{\varphi f}(\xi) \overline{\wh{\psi_{ast}}(\xi)} d\xi\right|^2
		\le a^{3/2} \left(\int_{U}
		|\wh{\varphi f}(\xi) \wh{\psi}(M_{as}^T\xi)| d\xi\right)^2\\
		&\le a^{3/2} \left(\int_{U}
		|\wh{\varphi f}(\xi)|^2 |\xi|^{2m} d\xi \right)
		\left(\int_{U} |\wh{\psi}(M_{as}^T\xi)|^2|\xi|^{-2m} d\xi\right).
	\end{align*}
	Consider $m\in \N$.
	Using \eqref{cinfinitydefn} with $N=m+2$ and observing $U\subseteq V_{\xi_0,\epsilon}$, we see
	$$
	C:=\int_{U} |\wh{\varphi f}(\xi)|^2 |\xi|^{2m} d\xi \le
	C_{m+2}^2 \int_{U} \frac{|\xi|^{2m}}{(1+|\xi|)^{2m+4}} d\xi<
	C_{m+2}^2 \int_{\R^2} (1+|\xi|)^{-4} d\xi<\infty,
	$$
	where $C_N:=\sup_{\xi\in V_{\xi_0,\epsilon}} (1+|\xi|)^N |\wh{\varphi f}(\xi)|<\infty$.
	Consequently, for all $a\in (0,1)$ and $s\in (s_0-r_0,s_0+r_0)$,
	\begin{align*}
		|\langle \varphi f, \psi_{ast}\rangle|^2
		&\le C a^{2m+3/2} \int_{U} |\wh{\psi}(M_{as}^T\xi)|^2(a|\xi|)^{-2m} d\xi\\
		&\le C_1 a^{2m+3/2} \int_U |\wh{\psi}(M_{as}^T\xi)|^2 d\xi
		\le C_1 a^{2m+3} \int_{\R^2} |\wh{\psi}(\xi)|^2 d\xi=
		C_1 a^{2m+3}\|\wh{\psi}\|_{L^2}^2,
	\end{align*}
	where $C_1:=C\max(2^{m},40^{-m})$.
	Combining with the established estimate for $\mathcal{SH}((1-\varphi)f)(a,s,t)$,
	we conclude that
	$|\mathcal{SH}_\psi f(a,s,t)|=\mathcal{O}(a^{2m+3})$ as $a\to 0^+$ uniformly when $(s,t)$ is near $(s_0,x_0)$. This proves that $(x_0,(r,rs_0))\in \Lambda_1(2m+3)$ for all $m\in \N$. Consequently, $(x_0, (r,rs_0))\in \cap_{k\in \N} \Lambda_1(k)$.
	Since $\wf(f)^c$ is open,
	we must have $(x_0, (r,rs_0))\in
	(\cap_{k\in \N} \Lambda_1(k))^{\mathrm{o}}=\Lambda_1(\infty)$.
	This proves $\wf(f)^c \subseteq \Lambda_1(\infty)\cup \Lambda_2(\infty)$ and hence \eqref{wfm:cst:decay:2} must hold.
\end{proof}

The wavefront set of a distribution depends on the asymptotic behavior of its Fourier transform after localization. But this task is difficult in general. In a practical situation, one can handle a finite number of samples of a signal and  the main task is to identify the wavefront set through a discrete transform. Moreover, Andrade-Loarca et al. \cite{andrade2019extraction} introduced an algorithmic approach to extract the digital wavefront set of an image.
Due to the importance of theoretical and practical aspects of detection of singularity along curves of the boundary of characteristic function some interesting study have been done by many authors including Guo and Labate in \cite{guo2018detection,guo2017microlocal} for discrete shearlet transform with bandlimited shearlets, Kutyniok and Petersen \cite{kutyniok2017classification} for 2D and 3D continuous transform with compactly supported shearlet, etc.
A microlocal Sobolev space is an efficient tool to extract wavefront sets in a discrete transform using wave packets \cite{de2014exact}. So, characterizing wavefront sets through microlocal Sobolev spaces is appealing in applications.

\section{Proofs of Theorems~\ref{thm:2micro} and~\ref{thm:2microsobolevwavefront} on $2$-microlocal spaces}
\label{sec:holdersobolev}

Definition~\ref{def:mcs} of a $2$-microlocal space $C_{x_0}^{\mcs,\mcs'}$ describes the local behavior of a tempered distribution $f$ near a point $x_0$.
To obtain sufficient information about a distribution, it is necessary to characterize a $2$-microlocal space $C_{x_0}^{\mcs,\mcs'}$ in Definition~\ref{def:mcs} in  the time/spatial domain.
Jaffard in \cite{jaffard1991pointwise} showed  that  $C_{x_0}^{\mcs,\mcs'} \subseteq C_{x_0}^{\mcs} \subseteq C_{x_0}^{\mcs, -\mcs'}$ for $\mcs+\mcs'>0$ and
\cite[Theorem~2]{jaffard1991pointwise}
characterizes $C_{x_0}^{\mcs,\mcs'}(\R^d)$ using a wavelet $\psi$ as follows:
\begin{align}\label{jaffard}
	f \in C_{x_0}^{\mcs,\mcs'} (\mathbb R^d) \quad \mbox{if and only if}\quad 	 |\langle f, \psi_{j, k}\rangle | \leq C 2^{-(d/2+\mcs) j} (1+\|k-2^jx_0\|)^{-\mcs'},
\end{align}
for all $j\in \Z$ and $k\in \Z^d$,
where $\psi_{j,k}:=2^{dj/2} \psi(2^j\cdot-k)$.
V\'ehel et al. \cite{kolwankar2002time,seuret2003time}  obtained a time-domain characterization of local H\"older spaces $C_{x_0}^{\mcs}(\R)$ and $2$-microlocal spaces $C_{x_0}^{\mcs,\mcs'}(\R)$ by introducing  a new function space $K_{x_0}^{\mcs,\mcs'}(\R)$ in \cite[Definition~5]{seuret2003time}.
Define
$|\alpha|:=\alpha_1+\cdots+\alpha_d$ for $\alpha=(\alpha_1,\ldots,\alpha_d)^T \in \N_0^d$. Recall that the floor function $\lfloor\mcs\rfloor$ for $\mcs\in \R$ is the largest integer such that $\lfloor \mcs\rfloor\le \mcs$.
We now recall the definition of the space $K_{x_0}^{\mcs,\mcs'}(\R^d)$ and its relationship with local H\"older spaces and  $2$-microlocal spaces.

\begin{definition}\label{twomicrolocaldef}
	Let $x_0\in \R^d$.
	We say that a function $f$ on $\R^d$ belongs to $K_{x_0}^{\mcs,\mcs'}(\R^d)$ with $\mcs'\leq 0$ and $\mcs+\mcs'\geq 0$, if there exist $0<\delta<1/4$, a positive constant $C>0$ and polynomials $P_\alpha$ of degree less than $\lfloor\mcs\rfloor-m$ with $m:=\lfloor \mcs+\mcs'\rfloor$ such that
	 \begin{equation}\label{twomicrolocaleqn}
		\begin{split}
			&\left|\frac{\partial^\alpha f(x)-P_\alpha(x)}{\|x-x_0\|^{\lfloor\mcs\rfloor-m}}
			-\frac{\partial^\alpha f(y)-P_\alpha(y)}{\|y-x_0\|^{\lfloor\mcs\rfloor-m}}\right|\\
			&\qquad\qquad\qquad
			\leq C \|x-y\|^{\mcs+\mcs'-m}
			 \left(\|x-y\|+\|x-x_0\|\right)^{-\mcs'-\lfloor\mcs\rfloor+m},
		\end{split}
	\end{equation}
	for all  $\alpha\in \N_0^d$ with
	$|\alpha|=m$ and for all
	$x,y \in \R^d$ satisfying $0<\|x-x_0\|<\delta$ and $0<\|y-x_0\|<\delta$, where $\partial^\alpha f$ stands for the $\alpha$th partial derivative of the function $f$.
\end{definition}

Note that $m:=\lfloor \mcs+\mcs'\rfloor \le \lfloor \mcs\rfloor$ due to $\mcs'\le 0$.
If $m=\lfloor\mcs\rfloor$, then \eqref{twomicrolocaleqn} is equivalent to
\begin{equation} \label{kspaceform0}
	|\partial^\alpha f(x)- \partial^\alpha f(y)|\leq C \|x-y\|^{\mcs+\mcs'-m}\left(\|x-y\|+\|x-x_0\|\right)^{-\mcs'}.
\end{equation}
We now state the following time-domain characterization of a $2$-microlocal space $C_{x_0}^{\mcs,\mcs'}(\R^d)$ and a local H\"older space $C_{x_0}^{\mcs}(\R^d) $ through the space $K_{x_0}^{\mcs,\mcs'}(\mathbb{R}^d)$. The following result generalizes
\cite{seuret2003time,seuret2003time1}
%and \cite[Theorems~1 and~2]{jaffard1991pointwise}
from dimension one to higher dimensions. For the sake of completeness, we provide a proof here.

\begin{lemma}\label{lem:timecharcz2microlocal}
	Let $x_0 \in \mathbb{R}^d$ and $\mcs, \mcs'\in \mathbb{R}$ with $\mcs'\leq 0$ and $\mcs+\mcs'\geq 0$.
	\begin{itemize}
		
		\item[(i)]
		 $K_{x_0}^{\mcs,\mcs'}(\R^d)\subseteq C_{x_0}^{\mcs,\mcs'}(\R^d)$, and the equal sign holds if $\mcs+\mcs'\notin \mathbb{N}_0$ and $\mcs'<0$.
		\item[(ii)]  $K_{x_0}^{\mcs,\mcs'}(\R^d)\subseteq  C^{\mcs}_{x_0}(\R^d)$ provided that $\mcs+\mcs'\notin \mathbb{N}_0$ and $\mcs'<0$. Moreover,
		$K_{x_0}^{\mcs,-\mcs}(\R^d)= C^{\mcs}_{x_0}(\R^d)$ for $\mcs>0$ with $\mcs\not \in \N$.
		%\item[(iii)]  $K_{x_0}^{\mcs,-\mcs}(\R^d) \subseteq C_{x_0}^{\mcs,-\mcs}(\R^d)$.
	\end{itemize}
\end{lemma}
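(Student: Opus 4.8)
The plan is to prove both statements by passing through Jaffard's wavelet characterization \eqref{jaffard}. Fix once and for all a compactly supported $C^N$ wavelet basis $\{\psi_{j,k}\setsp j\in\Z,\ k\in\Z^d\}$ of $L^2(\R^d)$ with $N$ vanishing moments and $N$ larger than $\mcs$, $|\mcs'|$ and $d$, so that \eqref{jaffard} is available in all the orders that occur; write $m:=\lfloor\mcs+\mcs'\rfloor$ and recall that $m\le\lfloor\mcs\rfloor$ because $\mcs'\le 0$, and that $0\le\mcs+\mcs'-m<1$. The first step, for $f\in K_{x_0}^{\mcs,\mcs'}(\R^d)$, is to convert the two-point inequality \eqref{twomicrolocaleqn} into one-point/annular information: on the dyadic annulus $A_j:=\{x\setsp 2^{-j-1}<\|x-x_0\|\le 2^{-j}\}$ (with $2^{-j}<\delta$), comparing points of $A_j$ with a reference point in $A_j$ and then chaining across the annuli (using $0\le\mcs+\mcs'-m<1$ to sum the geometric contributions) yields uniform bounds on $\partial^\alpha f$ and on its moduli of continuity on each $A_j$, with the geometric dependence on $j$ dictated by the exponents in \eqref{twomicrolocaleqn}, together with the fact that $f$ is genuinely $C^{\lfloor\mcs\rfloor}$ on $B_\delta(x_0)\setminus\{x_0\}$ (the weight $\|\cdot-x_0\|^{\lfloor\mcs\rfloor-m}$ and the polynomials $P_\alpha$ supply the smoothness off $x_0$). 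Extracting these estimates cleanly from the difference-quotient inequality is the first place that needs care.

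Given this, $K_{x_0}^{\mcs,\mcs'}\subseteq C_{x_0}^{\mcs,\mcs'}$ is obtained by estimating $\la f,\psi_{j,k}\ra$ and checking \eqref{jaffard}. For wavelets centred near $x_0$ (i.e.\ $\|k-2^jx_0\|\lesssim 1$) one subtracts the order-$m$ Taylor data of $f$ at $x_0$ — only $\partial^\alpha f$ for $|\alpha|\le m$ is needed — and estimates the remainder on $\supp\psi_{j,k}$ via the annular bounds of the first step, giving $|\la f,\psi_{j,k}\ra|\lesssim 2^{-j(d/2+\mcs)}$. For wavelets away from $x_0$, $\supp\psi_{j,k}$ lies in an annulus $A_{j'}$ with $2^{-j'}\asymp 2^{-j}\|k-2^jx_0\|$ on which the first step gives $f\in C^{\lfloor\mcs\rfloor}$ with scale-dependent derivative bounds; Taylor-expanding $f$ around $2^{-j}k$ to order $\lfloor\mcs\rfloor$, annihilating the polynomial part with the vanishing moments of $\psi$, and inserting those bounds produces the extra factor $(1+\|k-2^jx_0\|)^{-\mcs'}$. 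Together with \eqref{jaffard} this gives $f\in C_{x_0}^{\mcs,\mcs'}(\R^d)$.

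For the reverse inclusion — under the extra hypotheses $\mcs+\mcs'\notin\N_0$ and $\mcs'<0$ — one runs the wavelet reconstruction $f=\sum_{j,k}\la f,\psi_{j,k}\ra\psi_{j,k}$ backwards, using \eqref{jaffard}. Differentiating term by term, the contributions of the ``low order at $x_0$'' terms sum (here $\mcs'<0$ yields absolute summability over $k$ at each scale and $\mcs+\mcs'\notin\N_0$ excludes a borderline logarithmic divergence in assembling the scales) and define the polynomials $P_\alpha$ of degree $<\lfloor\mcs\rfloor-m$; to verify \eqref{twomicrolocaleqn} for $|\alpha|=m$ and $x,y$ near $x_0$, one splits the remaining double series at the scale $j_0$ with $2^{-j_0}\asymp\|x-y\|$, so that the scales $j\le j_0$ contribute (after a Taylor estimate) a term $\lesssim\|x-y\|^{\mcs+\mcs'-m}(\|x-y\|+\|x-x_0\|)^{-\mcs'-\lfloor\mcs\rfloor+m}$ and the scales $j>j_0$ a term of the same size, the two geometric-type sums converging because $0\le\mcs+\mcs'-m<1$. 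This is the technical core of the proof — correctly isolating the $P_\alpha$ from the wavelet series and then matching both exponents on the right-hand side of \eqref{twomicrolocaleqn} — and it is the step I expect to be the main obstacle.

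Finally, part (ii): $K_{x_0}^{\mcs,\mcs'}\subseteq C_{x_0}^\mcs$ is immediate by combining the inclusion in (i) with Jaffard's nesting $C_{x_0}^{\mcs,\mcs'}\subseteq C_{x_0}^\mcs$, which applies because $\mcs+\mcs'\ge 0$ together with $\mcs+\mcs'\notin\N_0$ forces $\mcs+\mcs'>0$. For the equality $K_{x_0}^{\mcs,-\mcs}=C_{x_0}^\mcs$ with $\mcs>0$ and $\mcs\notin\N$, note that $\mcs'=-\mcs$ makes $m=0$, so \eqref{twomicrolocaleqn} involves only $f$ itself and no derivatives: the inclusion $\subseteq$ then follows from the wavelet estimates above (which never used $\mcs'<0$), while $\supseteq$ follows by feeding the pointwise H\"older estimate \eqref{pointholder} into \eqref{twomicrolocaleqn}, after splitting the Taylor polynomial of $f$ at $x_0$ into its part of degree $<\lfloor\mcs\rfloor$ and its homogeneous top-degree part and treating the latter carefully; the hypothesis $\mcs\notin\N$ (equivalently $\mcs-\lfloor\mcs\rfloor>0$) is exactly what makes this last comparison go through.
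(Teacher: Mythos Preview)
Your plan is essentially correct and shares the paper's overall strategy: both directions go through Jaffard's wavelet characterization \eqref{jaffard}, and for the reverse inclusion both split the wavelet reconstruction at the scale $2^{-J}\asymp\|x-y\|$ into a ``coarse'' piece (controlled via a mean-value/Lipschitz estimate on $\psi_{j,k}$) and two ``fine'' pieces (controlled pointwise), with convergence of the two geometric sums using $\lfloor\mcs\rfloor<\mcs<\lfloor\mcs\rfloor+1$ and $m<\mcs+\mcs'$.

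The main organizational difference is this. The paper does \emph{not} work with $\langle f,\psi_{j,k}\rangle$ directly; instead it passes immediately to $\partial^\alpha f$ with $|\alpha|=m$ and, in the detailed case $m=\lfloor\mcs\rfloor$, uses the simplified $K$-condition \eqref{kspaceform0} (no polynomials, no weight $\|\cdot-x_0\|^{\lfloor\mcs\rfloor-m}$). A single vanishing moment then suffices: one subtracts the constant $\partial^\alpha f(k2^{-j})$, applies \eqref{kspaceform0} on $\supp\psi_{j,k}$, and the bound \eqref{wavelet2microlocal} drops out in one line, giving $\partial^\alpha f\in C_{x_0}^{\mcs-m,\mcs'}$ and hence $f\in C_{x_0}^{\mcs,\mcs'}$. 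The reverse inclusion is likewise carried out for $\partial^\alpha f$, reproducing \eqref{kspaceform0} directly. The general case $m<\lfloor\mcs\rfloor$ is then deferred to the argument of Seuret--V\'ehel. Your route, by contrast, keeps $f$ itself, which forces a higher-order wavelet, a preliminary ``annular extraction'' step to turn the weighted two-point inequality \eqref{twomicrolocaleqn} into local $C^{\lfloor\mcs\rfloor}$ information, and a Taylor expansion to order $\lfloor\mcs\rfloor$ at the wavelet center. This is more uniform across cases but noticeably heavier; the paper's reduction to $\partial^\alpha f$ and to \eqref{kspaceform0} is what makes its proof short. For part~(ii) the arguments agree: the inclusion comes from $K_{x_0}^{\mcs,\mcs'}\subseteq C_{x_0}^{\mcs,\mcs'}\subseteq C_{x_0}^{\mcs}$, and the equality $K_{x_0}^{\mcs,-\mcs}=C_{x_0}^{\mcs}$ is obtained by reading \eqref{pointholder} directly into the $K$-condition.
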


\begin{proof}
	(i) Let us first consider the case $m=\lfloor \mcs\rfloor$.
	Let $f\in K_{x_0}^{\mcs,\mcs'}(\R^d)$.
	Then \eqref{kspaceform0} holds. We shall prove $f\in C_{x_0}^{\mcs,\mcs'}(\R^d)$ using the wavelet characterization of a $2$-microlocal space.
	Let $\psi\in C^1(\R^d)$ be a compactly supported wavelet with at least one vanishing moment, i.e., $\int_{\R^d} \psi=0$.
	Since $\psi$ is a compactly supported function in $L^2(\R^d)$, there exists a positive constant $K$ such that
	$\|x-k2^{-j}\|\le K 2^{-j}$ for all $x\in \supp(\psi_{j,k})$ and $\int_{\supp(\psi_{j,k})} |\psi_{j,k}(x)|dx=\| \psi_{j,k}\|_{L^1(\R^d)}\le  K 2^{-dj/2}$.
	Let $\alpha\in \N_0^d$ with $|\alpha|=m$.
	Define $d_{j,k}:=\langle \partial^\alpha f, \psi_{j,k}\rangle$.
	For $j>\log_2(K/\delta)$, we observe $0<\|k2^{-j}-x_0\|<\delta$ and
	using \eqref{kspaceform0} and the vanishing moment of $\psi$,
	we have
	$\langle \partial^\alpha f(k2^{-j}),\psi_{j,k}\rangle=0$ and
	\begin{align*}
		|d_{j,k}|&=
		|\langle \partial^\alpha f, \psi_{j,k}\rangle|=
		\left|\int_{\supp(\psi_{j,k})}
		\left[\partial^\alpha f(x)-\partial^\alpha f(k2^{-j})\right]\psi_{j,k}(x)dx\right|\\
		&\leq  C \int_{\supp(\psi_{j,k})}
		 \|x-k2^{-j}\|^{\mcs+\mcs'-m}\left(\|x-k2^{-j}\|+\|x-x_0\|\right)^{-\mcs'} \left|\psi_{j,k}(x)\right|dx.
	\end{align*}
	Noting $\|x-x_0\|\leq \|x-k2^{-j}\|+\|k2^{-j}-x_0\|$ and using $\mcs+\mcs'-m\ge 0$ and $-\mcs'\ge 0$,
	we have
	$$
	|d_{j,k}|\leq  CK^{\mcs+\mcs'-m}2^{-j(\mcs-m)}\max(2K,1)(1+\|k-2^jx_0\|)^{-\mcs'} K 2^{-dj/2}.
	$$
	Consequently, we conclude that
	 \begin{equation}\label{wavelet2microlocal}
		|d_{j,k}|\leq C_1  2^{-(d/2+\mcs-m)j}(1+\|k-2^jx_0\|)^{-\mcs'},
		\qquad \forall\; j>\log_2(K/\delta), k\in \Z^d,
	\end{equation}
	where $C_1:=C K^{1+\mcs+\mcs'-m}\max(2K,1)$.
	The inequality in \eqref{wavelet2microlocal} for $j\le \log_2(K/\delta)$ and $k\in \Z^d$ is obvious.
	By the wavelet characterization in  \eqref{jaffard}, we have
	$\partial^\alpha f\in C_{x_0}^{\mcs-m,\mcs'}(\R^d)$ for all $\alpha\in \N_0^d$ with $|\alpha|=m$. Thus, we conclude that $f\in C_{x_0}^{\mcs,\mcs'}(\R^d)$.
	
	Conversely, let $f\in C_{x_0}^{\mcs,\mcs'}(\R^d)$ and $x\ne y\in \R^d$ such that $\|x-x_0\|<1/4$ and $\|y-x_0\|<1/4$. For all $\alpha\in \N_0^d$ with $|\alpha|=m$,
	\eqref{wavelet2microlocal} holds with $d_{j,k}:=\la \partial^\alpha f, \psi_{j,k}\ra$.
	Since $\|x-y\|<1$, let $J$ be the unique integer such that
	\begin{equation}\label{con2micro}
		2^{1-J}\leq \|x-y\|< 2^{-J},\quad \mbox{or equivalently}\quad
		\|y-x\|< 2^{-J} \le \|y-x\|/2.
	\end{equation}
	We deduce from the wavelet representation $\partial^\alpha f(x)=\sum_{j\in \Z}\sum_{k\in \Z^d} d_{j,k}\psi_{j,k}(x)$ with $d_{j,k}:=\la \partial^\alpha f, \psi_{j,k}\ra$ that
	\begin{align*}
		|\partial^\alpha f(x)-\partial^\alpha f(y)| \le &\underbrace{\sum_{j< J}\sum_{k\in \Z^d}|d_{j,k}|\left|\psi_{j,k}(x)-\psi_{j,k}(y)\right|}_\text{\textrm{I}}\\
		&+\underbrace{\sum_{j\ge J}\sum_{k\in \Z^d}|d_{j,k}|\left|\psi_{j,k}(x)\right|}_{\text{\textrm{II}}_x}
		+\underbrace{\sum_{j\ge J}\sum_{k\in \Z^d}|d_{j,k}|\left|\psi_{j,k}(y)\right|}_{\text{\textrm{II}}_y}.
	\end{align*}
	We first estimate the term $\textrm{II}_x$. We already proved that $\|x-k2^{-j}\|\le K 2^{-j}$ for all $x\in \supp(\psi_{j,k})$. Hence, for $x\in \supp(\psi_{j,k})$,
	$$
	\|k-2^j x_0\|=2^{j}\|k2^{-j}-x_0\|\le 2^j(\|k2^{-j}-x\|+\|x-x_0\|)\le K+2^j\|x-x_0\|.
	$$
	Because $\psi$ has compact support, there is $M>0$ depending only on $\supp(\psi)$ such that
	\begin{equation}\label{supp:psi}
		\sum_{k\in \Z^d} \chi_{\supp(\psi_{j,k})}(x)
		=\sum_{k\in \Z^d}
		\chi_{\supp(\psi)}(2^jx-k)
		\le M,\qquad \forall\; j\in \Z, x\in \R^d.
	\end{equation}
	Since $\partial^\alpha f\in C_{x_0}^{\mcs-m,\mcs'}(\R^d)$,
	\eqref{wavelet2microlocal} holds and using $|\psi_{j,x}(x)|\le 2^{dj/2}\|\psi\|_{L^\infty} \chi_{\supp(\psi_{j,k})}(x)$, we have
	\begin{align*}
		\textrm{II}_x
		&\le C_1 \sum_{j\ge J} \sum_{k\in \Z^d} 2^{-(d/2+\mcs-m)j}(1+\|k-2^jx_0\|)^{-\mcs'}|\psi_{j,k}(x)|\\
		&\le C_1 \sum_{j\ge J} \sum_{k\in \Z^d} 2^{-(d/2+\mcs-m)j}(K+1+2^j \|x-x_0\|)^{-\mcs'}|\psi_{j,k}(x)|\\
		&\le C_1 M \|\psi\|_{L^\infty} (\|x-x_0\|+2^{-J}(K+1))^{-\mcs'}
		\sum_{j\ge J} 2^{-(\mcs+\mcs'-m)j}\\
		&\le C_2 (\|x-x_0\|+2^{1-J})^{-\mcs'} 2^{-(\mcs+\mcs'-m)J},
	\end{align*}
	where
	$$
	C_2:=C_1 M \|\psi\|_{L^\infty}
	\max(1, (K/2+1/2)^{-\mcs'}) (1-2^{-(\mcs+\mcs'-m)})^{-1}<\infty
	$$
	and we used the assumption $\mcs'\le 0$ and $m:=\lfloor \mcs+\mcs'\rfloor <\mcs+\mcs'$ by $\mcs+\mcs'\not \in \N_0$.
	Now we deduce from the above inequality and \eqref{con2micro} that
	\begin{equation}\label{IIx}
		\textrm{II}_x
		\le C_2
		\|y-x\|^{-(\mcs+\mcs'-m)}( \|x-x_0\|+\|y-x\|)^{-\mcs'}.
	\end{equation}
	For the term  $\textrm{II}_y$,
	\eqref{IIx} obviously holds by exchanging $x$ with $y$.
	Noting $\|y-x_0\|\le \|y-x\|+\|x-x_0\|$ and using \eqref{IIx}  by exchanging $x$ with $y$, we conclude that
	$$
	\textrm{II}_y
	\le C_2 2^{-\mcs'} \| y-x\|^{-(\mcs+\mcs'-m)}( \|x-x_0\|+\|y-x\|)^{-\mcs'}.
	$$
	To estimate the term \textrm{I}, using the mean value theorem, we conclude that
	$$
	|\psi_{j,k}(x)-\psi_{j,k}(y)|\leq \|y-x\| \sup_{c\in [0,1]}\|\nabla(\psi_{j,k})(cx+(1-c)y)\|.
	$$
	Since $\sup_{z\in \R^d} \|\nabla(\psi_{j,k})(z)\| \le 2^{(d/2+1)j} \|\nabla \psi\|_{L^\infty}<\infty$, we conclude that
	$$
	|\psi_{j,k}(x)-\psi_{j,k}(y)|\le \|y-x\| 2^{(d/2+1)j}  \|\nabla \psi\|_{L^\infty}(\chi_{\supp(\psi_{j,k})}(x)
	+\chi_{\supp(\psi_{j,k})}(y)).
	$$
	Using \eqref{wavelet2microlocal} and \eqref{con2micro}, we conclude from the above inequality that
	$I\le I_x+I_y$ with
	{\small
		$$
		I_x:=C_1 \|\nabla \psi\|_{L^\infty} \|y-x\|
		\sum_{j<J}\sum_{k\in \Z^d}
		2^{-(d/2+\mcs-m)j} (1+\|k-2^jx\|)^{-\mcs'} 2^{(d/2+1)j} \chi_{\supp(\psi_{j,k})}(x)
		$$}
	and $I_y$ is defined by exchanging $y$ with $x$. Using \eqref{supp:psi} and the same argument for estimating the term $\textrm{II}_x$, we see that
	\begin{align*}
		I_x &\le C_1 M \|\nabla \psi\|_{L^\infty} \|y-x\| \sum_{j<J} 2^{-(d/2+\mcs-m)j}(K+1+2^j \|x-x_0\|)^{-\mcs'} 2^{(d/2+1)j}\\
		&\le C_1 M \|\nabla \psi\|_{L^\infty} (K+1)^{-\mcs'}
		\|y-x\| \sum_{j<J} 2 ^{(m+1-\mcs)j}=C_3 \|y-x\| 2^{(m+1-\mcs)J},
	\end{align*}
	where we used $m+1-\mcs>0$ by $m=\lfloor \mcs\rfloor>\mcs-1$, and
	$$
	C_3:=C_1 M \|\nabla \psi\|_{L^\infty} (K+1)^{-\mcs'} (2^{m+1-\mcs}-1)^{-1}<\infty.
	$$
	Using \eqref{con2micro} and $m+1-\mcs>0$ and noting that $\mcs'\le 0$,
	we conclude from $2^J<\|y-x\|^{-1}$ that
	$$
	I_x\le C_3\|y-x\|^{\mcs-m}
	\le C_3 \|y-x\|^{\mcs+\mcs'-m}(\|y-x\|+\|x-x_0\|)^{-\mcs'}.
	$$
	For the term $I_y$, the above inequality obviously holds by exchanging $y$ with $x$.
	Noting $\|y-x_0\|\le \|y-x\|+\|x-x_0\|$ and using the above inequality by exchanging $y$ with $x$, we conclude that
	$$
	\textrm{I}_y
	\le C_3 2^{-\mcs'} \| y-x\|^{-(\mcs+\mcs'-m)}( \|x-x_0\|+\|y-x\|)^{-\mcs'}.
	$$
	Putting all estimates together, we conclude that \eqref{kspaceform0} holds and hence,
	$f\in K^{\mcs,\mcs'}_{x_0}(\R^d)$.
	Using a smooth wavelet $\psi$
	with high vanishing moments,
	the claim for $m<\lfloor \mcs\rfloor$ can be proved by following the same argument as given in \cite[Section~6]{seuret2003time1}.
	
	To prove item (ii), by item (i), we have $K^{\mcs,\mcs'}_{x_0}(\R^d)\subseteq C^{\mcs,\mcs'}_{x_0}(\R^d)\subseteq C^\mcs_{x_0}(\R^d)$. If $\mcs'=-\mcs$,
	then $C^{\mcs}_{x_0}(\R^d) \subseteq K_{x_0}^{\mcs,-\mcs}(\R^d)$  follows from \eqref{kspaceform0} and \eqref{pointholder}.
	%(iii) Using  $C^{\mcs}_{x_0} \subseteq C_{x_0}^{\mcs,-\mcs}$  and part (i) of \ref{lem:timecharcz2microlocal} the result follows
	% Further, it can be  noted that  the $2$-microlocal space $C_{x_0}^{\mcs,-\mcs}$ contains distribution which do not belongs to the space $K_{x_0}^{\mcs,-\mcs}$. The space $K_{x_0}^{\mcs,-\mcs}$ is strictly included in the $2$-microlocal space.
\end{proof}

\begin{proof}[Proof of {Theorem~\ref{thm:2micro}}] It is enough to show the result for the case $m:=\lfloor \mcs+\mcs'\rfloor=\lfloor \mcs\rfloor$ while for the case $m<\lfloor \mcs\rfloor$, the estimate is similar by using a smooth shearlet with at least $\lfloor \mcs\rfloor-m+1$ vanishing moments, as in the proof of Lemma~\ref{lem:timecharcz2microlocal}.
	
	(i)
	Let $f\in C_{x_0}^{\mcs,\mcs'}(\R^2)$.
	Define $\wh{\tilde{\psi}_1}(\xi):=\frac{\wh{\psi_1}(\xi)}{(-i2\pi \xi)^m}$. Since $\wh{\psi_1}\in \mathcal{D}(\R)$ and $\supp(\wh{\psi_1})\subseteq [-2,-\frac{1}{2}]\cup [\frac{1}{2},2]$, it is straightforward to conclude that $\wh{\tilde{\psi}_1}$ is well defined and
	$\wh{\tilde{\psi}_1}\in \mathcal{D}(\R)$
	with $\supp(\wh{\tilde{\psi}_1})\subseteq [-2,-\frac{1}{2}]\cup [\frac{1}{2},2]$.
	Define $\wh{\tilde{\psi}}(\xi_1,\xi_2):=\wh{\tilde{\psi}_1}(\xi_1)\wh{\psi_2}(\xi_2/\xi_1)$ as in \eqref{psidef}.
	Choose a particular choice $\alpha:=(m,0)\in \N_0^2$ with $|\alpha|=m$.
	By the Fourier transform of $\psi_{ast}$ in \eqref{hatpsiast}, for $\xi=(\xi_1,\xi_2)$, we have
	$$
	a^m \wh{\tilde{\psi}_{ast}}(\xi)=
	a^{3/4} e^{-i2\pi t\cdot \xi}
	(-i2\pi \xi_1)^{-m}
	\wh{\psi_1}(a\xi_1)
	 \wh{\psi_2}\left(\tfrac{1}{\sqrt{a}}\left(\tfrac{\xi_2}{\xi_1}-s\right)\right)
	=(-i2\pi \xi)^{-\alpha}\wh{\psi_{ast}}(\xi).
	$$
	Define $f_\alpha:=\partial^\alpha f$.
	Since $\wh{f_\alpha}(\xi)=(i2\pi \xi)^\alpha \wh{f}(\xi)$, for $\alpha=(m,0)$, we deduce from the above identity that
	$$
	\la \wh{f}, \wh{\psi_{ast}}\ra=
	\la f, \psi_{ast}\ra=
	\la (i2\pi \xi)^\alpha \wh{f}(\xi), (-i2\pi \xi)^{-\alpha} \wh{\psi_{ast}}(\xi)\ra=a^m \la \wh{f_\alpha}, \wh{\tilde{\psi}_{ast}}\ra.
	$$
	Now we deduce that
	$$
	\mathcal{SH}_\psi f(a,s,t)=\la \wh{f}, \wh{\psi_{ast}}\ra=
	a^m \la \wh{f_\alpha}, \wh{\tilde{\psi}_{ast}}\ra=
	a^m \mathcal{SH}_{\tilde{\psi}} f_\alpha (a,s,t).
	$$
	Similarly, taking $\beta=(0,m)$, we have
	$\mathcal{SH}^{(v)}_\psi f(a,s,t)=
	a^m \mathcal{SH}^{(v)}_{\tilde{\psi}} f_\beta(a,s,t)$.
	To prove \eqref{necessary2micro},
	it suffices to prove that
	{\small	
		\begin{equation} \label{SHfalpha} \max\left(|\mathcal{SH}_{\tilde{\psi}}f_\alpha (a,s,t)|, |\mathcal{SH}^{(v)}_{\tilde{\psi}}f_\alpha(a,s,t)|\right)\leq C a^{\frac{3}{4}+\frac{\mcs-m}{2}}\left(1+\left\|\tfrac{t-x_0}{\sqrt{a}}\right\|^{-\mcs'}\right),
		\end{equation}
	}
	for all $a\in(0,1)$, $s\in[-2,2]$, $t\in \mathbb R^2$ and for all $\alpha\in \N_0^2$ with $|\alpha|=m$.
	
	By $\supp(\wh{\tilde{\psi}_1})\subseteq [-2,-\frac{1}{2}]\cup [\frac{1}{2},2]$,
	we note that $\wh{\tilde{\psi}}(0)=0$ and hence $\int_{\R^2} \tilde{\psi}(x) dx=0$. Then
	{\small	
		$$
		|\mathcal{SH}_{\tilde{\psi}} f_\alpha(a,s,t)|
		=|\la f_\alpha, \tilde{\psi}_{ast}\ra|
		=|\la f_\alpha(x)-f_\alpha(t), \tilde{\psi}_{ast}\ra|
		\le \int_{\R^2}|f_\alpha(x)-f_\alpha(t)|\, |\tilde{\psi}_{ast}(x)|dx,
		$$
	}
	Since $f\in C^{\mcs,\mcs'}_{x_0}(\R^2)$ and $\mcs+\mcs'\not\in \N_0$,
	by item (i) of Lemma~\ref{lem:timecharcz2microlocal}, $f\in K^{\mcs,\mcs'}_{x_0}(\R^2)$ and \eqref{kspaceform0} holds with $|\alpha|=m$ due to our assumption. Since $\tilde{\psi}_{ast}=a^{-3/4}\tilde{\psi}(M_{as}^{-1}(\cdot-t))$, using substitution $z=M_{as}^{-1}(x-t)$, we have $x=t+M_{as} z$. By \eqref{kspaceform0} with $|\alpha|=m$, we have
	\begin{align*} |\mathcal{SH}_{\tilde{\psi}}f_\alpha(a,s,t)|
		\leq C\ a^{\frac{3}{4}} \int_{\R^2}
		\|M_{as} z\|^{\mcs+\mcs'-m}
		\left(\|M_{as} z\|+\|t-x_0+M_{as} z\|\right)^{-\mcs'}
		|\tilde{\psi}(z)|dz.
	\end{align*}
	Using the operator norms in \cite{kutyniok2009resolution}, for $a>0$ and $s\in [-2,2]$, we have
	\begin{equation}\label{Masnorm}
		\|M_{as} z\|\le a^{1/2} \|S_s\| \|z\|
		\le a^{1/2} \left(1+\tfrac{s^2}{2}+\left( s^2+\tfrac{s^4}{4}\right)^{1/2}\right)^{1/2}  \|z\|\le C_0 a^{1/2} \|z\|,
	\end{equation}
	where $C_0:=\sup_{s\in [-2,2]} (1+\frac{s^2}{2}+( s^2+\frac{s^4}{4})^{1/2})^{1/2}=\sqrt{3+\sqrt{8}}$.
	Then we deduce that
	\begin{small}
		\begin{align*}		 &|\mathcal{SH}_{\psi}f_\alpha(a,s,t)|\\
			&\qquad \leq C C_0^{\mcs+\mcs'-m}\ a^{\frac{3}{4}+\frac{\mcs+\mcs'-m}{2}}\ \int_{\R^2} \|z\|^{\mcs+\mcs'-m}
			(2C_0a^{\frac{1}{2}}\|z\|
			+\|t-x_0\|)^{-\mcs'} |\tilde{\psi}(z)|dz\\
			&\qquad \leq C_1\ a^{\frac{3}{4}+\frac{\mcs-m}{2}} \left(\int_{\R^2}\|z\|^{\mcs-m} |\tilde{\psi}(z)|dz+\left\|\frac{t-x_0}
			{\sqrt{a}}\right\|^{-\mcs'}	 \int_{\R^2}\|z\|^{\mcs+\mcs'-m}|\tilde{\psi}(z)|dz\right),
		\end{align*}
	\end{small}
	where $C_1:=C C_0^{\mcs+\mcs'-m} 2^{-\mcs'}\max(1,2^{-\mcs'} C_0^{-\mcs'})$.
	Since $\tilde{\psi}$ is a Schwartz function, we have
$$
\int_{\R^2} \|z\|^{\mcs-m} |\tilde{\psi}(z)| dz<\infty\quad \mbox{and}\quad	 \int_{\R^2}\|z\|^{\mcs+\mcs'-m}|\tilde{\psi}(z)|dz<\infty.
$$
	This proves the first part of \eqref{SHfalpha}.
	The estimation for the vertical shearlet transform $\mathcal{SH}_{\tilde{\psi}}^{(v)} f_\alpha(a,s,t)$ is similar. This proves \eqref{SHfalpha}. Consequently, \eqref{necessary2micro} must hold.
	
	\noindent (ii) Recall
	that we can decompose $f=P_{\lfD} f+P_{\mathcal{C}} f +P_{\mathcal{C}^{v}}f$ in \eqref{decomp}. Since $P_{\lfD}f$ is the inverse Fourier transform of the compactly supported distribution $\wh{f}\chi_{\lfD}$, $P_{\lfD} f$ must be analytic and it is easy to see that $P_{\lfD} f\in C^{\mcs,\mcs'}_{x_0}(\R^2)$.
	Therefore, we only need to prove the result for $P_{\mathcal{C}}f$ while the analysis for  $P_{\mathcal{C}^{v}}f$ is similar.
	Note that
	$$
	\mathcal{SH}_\psi (P_{\mathcal{C}} f)(a,s,t)=\langle P_{\mathcal{C}} f, \psi_{ast}\rangle
	=\langle \wh{P_{\mathcal{C}} f
	},\wh{\psi_{ast}}\rangle=
	\langle \wh{f} \chi_{\mathcal{C}}, \wh{\psi_{ast}}\rangle
	$$
	and $\|\wh{\psi_{ast}}\chi_{\mathcal{C}}\|_{L^2}\ne 0$ only if $a\in (0,1]$ and $s\in [-2,2]$.
	Thus, $\mathcal{SH}_\psi (P_{\mathcal{C}} f)(a,s,t)=\mathcal{SH}_\psi f(a,s,t)$ for all $a\in (0,1]$ and $s\in [-2,2]$.
	Using the reproducing formula in \eqref{reprodformula}
	with $f$ being replaced by $P_{\mathcal{C}} f$, we have
	$$
	P_{\mathcal{C}} f(x)=
	\int_{a=0}^{a=1}\int_{s=-2}^{s=2}
	\int_{t\in \R^2} \mathcal{SH}_\psi f(a,s,t)\psi_{ast}(x) a^{-3} dtdsda.
	$$
	Define the gradient operator $\nabla:=(\partial_1,\partial_2)$ and let $\otimes^m \nabla$ be its $m$th (right) Kronecker product. Using the identity in \cite[Proposition~2.1]{han03} or \cite[Lemma~7.2.1]{hanbook} and defining $f_m:=(\otimes^m \nabla) (P_{\mathcal{C}} f)$, we deduce from the above identity that
	$$
	 f_m(x)=\int_{a=0}^{a=1}\int_{s=-2}^{s=2}
	\int_{t\in \R^2} \mathcal{SH}_\psi f(a,s,t) \tilde{\psi}_{ast}(x) (\otimes^m M_{as}^{-1})
	a^{-3} dtdsda,
	$$
	where $\tilde{\psi}:=(\otimes^m \nabla) \psi$. Note that $f_m$ is a row vector and each entry of $f_m$ is $\partial^\alpha (P_{\mathcal{C}} f)$ for some $\alpha\in \N_0^2$ with $|\alpha|=m$.
	Hence, we can decompose $f_m$
	into a sum of Littlewood-Paley as follows:
	$$
	 f_m(x)=\sum_{j=-\infty}^{-1}\Delta_j(x)
	$$
	with
	$$
	 \Delta_j(x)=\int_{2^j}^{2^{j+1}}\int_{-2}^{2}\int_{\mathbb R^2}\mathcal{SH}_{\psi}f (a,s,t)\tilde{\psi} _{ast}(x)(\otimes^m M_{as}^{-1})
	a^{-3} dt ds da.
	$$
	By the definition of $M_{as}$ in \eqref{mas},
	there exists a positive constant $C_2$ such that the operator norm $\|\otimes^m M_{as}^{-1}\|\le C_2 a^{-m}$ for all $s\in [-2,2]$ and $a\in (0,1)$.
	By our assumption in \eqref{sufficient2micro1} and the definition of $\tilde{\psi}_{ast}$ in \eqref{psiast}, we have
	{\small	\begin{align*}
			&|\Delta_j(x)|\\
			&\le CC_2 \int_{2^j}^{2^{j+1}}
			\int_{-2}^2\int_{\R^2}
			 a^{\frac{5}{4}+\mcs-\frac{3}{4}}
			\left(1+\left\| \frac{t-x_0}{\sqrt{a}}\right\|^{-2\mcs'}\right)
			 |\tilde{\psi}(M_{as}^{-1}(x-t))| a^{-3-m} dt ds da\\
			&=CC_2 \int_{2^j}^{2^{j+1}}
			\int_{-2}^2\int_{\R^2}
			a^{\mcs-m-1}
			\left(1+\left\| \frac{x-x_0-M_{as}z}{\sqrt{a}}\right\|^{-2\mcs'}\right)
			|\tilde{\psi}(z)| dz ds da\\
			&\le CC_2  \int_{2^j}^{2^{j+1}}
			\int_{-2}^2\int_{\R^2}
			a^{\mcs-m-1}
			\left(1+a^{\mcs'} 2^{-2\mcs'} (\|M_{as}z\|^{-2\mcs'}+\|x-x_0\|^{-2\mcs'})
			\right) |\tilde{\psi}(z)| dz ds da,
		\end{align*}
	}
	where we used $\mcs'\le 0$ in the last inequality. Note that
	\eqref{Masnorm} holds for all $a>0$ and $s\in [-2,2]$.
	Observing that $\tilde{\psi}$ is a Schwartz function and $2^{-j q}\int_{2^j}^{2^{j+1}}
	a^{q-1} da=(2^q-1)/q$ for all $q\in \R$ (with $\lim_{q\to 0} \frac{2^q-1}{q}=\ln 2$),
	we deduce from the above inequalities that
	 \begin{equation}\label{littlewoodpaley}
		|\Delta_j(x)| \leq \tilde{C} 2^{j (\mcs-m)}+\tilde{C} \ 2^{j (\mcs+\mcs'-m)}\left\|x-x_0\right\|^{-2\mcs'},
	\end{equation}
	where $\tilde{C}:=CC_2 \max(C_3,C_4)<\infty$, and by $\mcs+\mcs'\ge m$, $\mcs'\le 0$ and $\mcs\ge \lfloor \mcs\rfloor=m$,
	\begin{align*}
		C_3&:=
		(4\|\psi\|_{L^1}
		+C_0^{-2\mcs'} 2^{2-2\mcs'} \| |z|^{-2\mcs'} \tilde{\psi}(z)\|_{L^1})
		\tfrac{2^{\mcs-m}-1}{\mcs-m},\\
		C_4&:=
		2^{2-2\mcs'} \|\tilde{\psi}\|_{L^1} \tfrac{2^{\mcs+\mcs'-m}-1}{\mcs+\mcs'-m}.
	\end{align*}
	The above estimate in \eqref{littlewoodpaley} guarantees fast decay of $|\Delta_j(x)|$ as $j\to -\infty$.
	
	In order to prove $f\in K_{x_0}^{\mcs,\mcs'}(\R^2)$,
	we now estimate $|\Delta_j(x)-\Delta_j(y)|$.
	By our assumption in \eqref{sufficient2micro1} and the definition of $\tilde{\psi}_{ast}$ in \eqref{psiast}, we have
	{\small
		\begin{align*}
			&|\Delta_j(x)-\Delta_j(y)|\\
			&\le C \int_{2^j}^{2^{j+1}}
			\int_{-2}^2\int_{\R^2}		 a^{\mcs-m-\frac{5}{2}}
			 \left(1+\left\|\tfrac{t-x_0}{\sqrt{a}}\right\|^{-2\mcs'}
			\right) |\tilde{\psi}(M_{as}^{-1}(x-t))-\tilde{\psi}(M_{as}^{-1}(y-t))| dt ds da\\
			&=C \int_{2^j}^{2^{j+1}}
			\int_{-2}^2\int_{\R^2}
			a^{\mcs-m-1}
			\left(1+\left\| \tfrac{x-x_0-M_{as}z}{\sqrt{a}}\right\|^{-2\mcs'}\right)	 |\tilde{\psi}(z)-\tilde{\psi}(z+M_{as}^{-1}(y-x)| dz ds da.
		\end{align*}
	}
	Note that
	\begin{align*}
		 |\tilde{\psi}(z)-\tilde{\psi}(z+M_{as}^{-1}(y-x))|
		&=\left|\int_{p=0}^{p=1} \nabla \tilde{\psi}(z+M_{as}^{-1}(y-x) p)\cdot M_{as}^{-1}(y-x) dp\right|\\
		&\le \|M_{as}^{-1}(y-x)\| \int_{p=0}^{p=1} \|\nabla \tilde{\psi}(z+M_{as}^{-1}(y-x)p)\| dp.
	\end{align*}
	Hence, we deduce that
	{\small
		\begin{align*}
			&|\Delta_j(x)-\Delta_j(y)|\\
			&\le C \int_{2^j}^{2^{j+1}}
			\int_{-2}^2
			\int_{p=0}^{p=1}
			\int_{\R^2}
			a^{\mcs-m-1}
			\left(1+\left\| \tfrac{x-x_0-M_{as}z}{\sqrt{a}}\right\|^{-2\mcs'}\right)
			\|M_{as}^{-1}(y-x)\|
			\\ &\hspace{2 in}\times
			\|\nabla \tilde{\psi}(z+M_{as}^{-1}(y-x)p)\| dz dp ds da\\
			&\le C \int_{2^j}^{2^{j+1}}
			\int_{-2}^2
			\int_{p=0}^{p=1}
			\int_{\R^2}
			a^{\mcs-m-1}
			\left(1+\left\| \tfrac{x-x_0+(y-x)p-M_{as}z}{\sqrt{a}}\right\|^{-2\mcs'}\right)
			\|M_{as}^{-1}(y-x)\|
			\\
			&\hspace{2 in}\times
			\|\nabla \tilde{\psi}(z)\| dz dp ds da\\
			&\le C \int_{2^j}^{2^{j+1}}
			\int_{-2}^2
			\int_{\R^2}
			a^{\mcs-m-1}
			\left(1+a^{\mcs'} 3^{-2\mcs'} (
			\|M_{as} z\|^{-2\mcs'}+\|x-x_0\|^{-2\mcs'}+
			\|y-x\|^{-2\mcs'})\right)
			\\ &\hspace{2 in}\times
			\|M_{as}^{-1}(y-x)\| \|\nabla \tilde{\psi}(z)\| dz ds da.
		\end{align*}
	}
	Employing the operator norm $\|M_{as}^{-1}\|$ and similar to \eqref{Masnorm}, we have
	$$
	\|M_{as}^{-1}(y-x)\|\leq \|M_{as}^{-1}\| \|y-x\|
	%\leq \left\|\begin{bmatrix}a^{-1}&0\\0&a^{-\tfrac{1}{2}}\end{bmatrix}\right\|_{op} \left\|\begin{bmatrix}1&s\\0&1\end{bmatrix}\right\|_{op}\|y-x\|
	\leq C_0a^{-1}\|y-x\|.
	$$
	Hence, we further deduce that
	{\small
		\begin{align*}
			&|\Delta_j(x)-\Delta_j(y)|\\
			&\le CC_0 \|y-x\|\left\lbrace\int_{2^j}^{2^{j+1}}
			a^{\mcs-m-2}
			\int_{-2}^2 \int_{\R^2}\left(1+(3C_0)^{-2\mcs'}\|z\|^{-2\mcs'}\right)
			\|\nabla \tilde{\psi}(z)\|dz ds da \right.\\
			&\left. \qquad
			 +3^{-2\mcs'}\left(\left\|x-x_0\right
			 \|^{-2\mcs'}+\left\|y-x_0\right\|^{-2\mcs'}
			\right)\int_{2^j}^{2^{j+1}}
			a^{\mcs+\mcs'-m-2}
			\int_{-2}^2
			\int_{\R^2}
			\|\nabla \tilde{\psi}(z)\|dz ds da \right\rbrace.
		\end{align*}
	}
	Thus we conclude that for all $j\le -1$ and $x,y\in \R^2$,
	{\small
		 \begin{equation}\label{difflittlewoodpaley}
			|\Delta_j(x)-\Delta_j(y)| \leq \mathring{C} \left\|y-x\right\| \left\lbrace2^{j (\mcs-m-1)}+2^{j (\mcs+\mcs'-m-1)}
			 (\|x-x_0\|^{-2\mcs'}+\|y-x_0\|^{-2\mcs'})
			\right\rbrace,
		\end{equation}
	}
	where $\mathring{C}:=CC_0\max(C_5,C_6)<\infty$ with
	\begin{align*}
		C_5 &:=
		2^2\left(\|\nabla \tilde{\psi}\|_{L^1}+(3C_0)^{-2\mcs'}\||z|^{-2\mcs'}\nabla \tilde{\psi}(z)\|_{L^1}\right)\frac{2^{\mcs-m-1}-1}{\mcs-m-1},\\
		C_6 &:=2^23^{-2\mcs'}\|\nabla \tilde{\psi}\|_{L^1}\frac{2^{\mcs+\mcs'-m-1}-1}{\mcs+\mcs'-m-1}.
	\end{align*}
	Since $K_{x_0}^{\mcs,\mcs'}(\R^2)= C_{x_0}^{\mcs,\mcs'}(\R^2)$ by Lemma~\ref{lem:timecharcz2microlocal},
	we want to show $f\in K_{x_0}^{\mcs,\mcs'}(\R^2)$.
	We consider $x\ne y\in \R^2$ with $\|x-x_0\|<1/4$ and $\|y-x_0\|<1/4$.
	Then there exists a unique positive integer $J$ such that \eqref{con2micro} holds. Hence, by $f_m(x)=\sum_{j=-\infty}^{-1}\Delta_j(x)$, we can write
	\begin{align*}
		|f_m(x)-f_m(y)|\leq
		 \underbrace{\sum_{j=1}^{J-1}\left|\Delta_{-j}(x)
			 -\Delta_{-j}(y)\right|}_{\textrm{I}_0} +\underbrace{\sum_{j=J}^{\infty}|\Delta_{-j}(x)|}_{\textrm{I}_x}		 +\underbrace{\sum_{j=J}^{\infty}|\Delta_{-j}(y)|}_{\textrm{I}_y}.
		 %-\Delta_{-j}(y)\right|}_\text{\textrm{I}}.
	\end{align*}
	We first estimate $\textrm{I}_0$.
	By $m:=\lfloor \mcs+\mcs'\rfloor=\lfloor \mcs\rfloor$ and $\mcs'<0$, we see that $m+1-\mcs-\mcs'>m+1-\mcs=\lfloor \mcs\rfloor+1-\mcs>0$. Consequently, by
	\eqref{difflittlewoodpaley}, we deduce that
	\begin{align*}
		\textrm{I}_0&\le \mathring{C} \|y-x\|
		\left\{
		\sum_{j=1}^{J-1} 2^{j (m+1-\mcs)}+
		 (\|x-x_0\|^{-2\mcs'}+\|y-x_0\|^{-2\mcs'})
		\sum_{j=1}^{J-1} 2^{j(m+1-\mcs-\mcs')} \right\}\\
		&\le C_7
		\|y-x\| 2^{J(m+1-\mcs)} \left\{ 1+
		2^{-J\mcs'}
		 (\|x-x_0\|^{-2\mcs'}+\|y-x_0\|^{-2\mcs'})
		\right\},
	\end{align*}
	where $C_7:=\mathring{C} \max((2^{m+1-\mcs}-1)^{-1}, (2^{m+1-\mcs-\mcs'}-1)^{-1})<\infty$.
	Using \eqref{con2micro} and $\mcs'<0$, we further deduce from the above inequality that
	$2^J<\|y-x\|^{-1}$ and
	$$
	\textrm{I}_0\le C_7  \|y-x\|^{\mcs+\mcs'-m}\left(
	\|y-x\|^{1-\mcs'}+
	 \|x-x_0\|^{-2\mcs'}+\|y-x_0\|^{-2\mcs'}\right).
	$$
	Since $\max(\|x-x_0\|,\|y-x_0\|)<1/4$ and $\mcs'<0$, we conclude that
	$$
	\|y-x\|^{1-\mcs'}+
	 \|x-x_0\|^{-2\mcs'}+\|y-x_0\|^{-2\mcs'}
	\le C_8 (\|y-x\|+\|x-x_0\|)^{-\mcs'},
	$$
	where $C_8=1+2^{-2\mcs'}$,
	and hence
	$$
	\textrm{I}_0\le C_7 C_8  \|y-x\|^{\mcs+\mcs'-m}(
	\|y-x\|+
	\|x-x_0\|)^{-\mcs'}.
	$$
	%On the other hand,
	%note that $\textrm{I}\le \textrm{I}_x+\textrm{I}_y$, where
	%$\textrm{I}_x:= \sum_{j=J}^\infty|\Delta_{-j}(x)|$ and $\textrm{I}_y:=\sum_{j=J}^\infty |\Delta_{j}(y)|$.
	Notice that $m:=\lfloor \mcs+\mcs'\rfloor\le \mcs+\mcs'<\mcs$ by $\mcs'<0$. Hence, $m-\mcs<0$.
	Using \eqref{littlewoodpaley},
	we now estimate $\textrm{I}_x$:
	\begin{align*}
		\textrm{I}_x
		&\le
		\tilde{C} \sum_{j=J}^\infty 2^{j(m-\mcs)}+\tilde{C} \|x-x_0\|^{-2\mcs'}
		\sum_{j=J}^\infty 2^{m-\mcs-\mcs'}\\
		&\le C_9 \left\{2^{J(m-\mcs)}+
		2^{J(m-\mcs-\mcs')} \|x-x_0\|^{-2\mcs'}\right\},
	\end{align*}
	where $C_9:=\tilde{C}\max((1-2^{m-\mcs})^{-1}, (1-2^{m-\mcs-\mcs'})^{-1})<\infty$.
	The above inequality for $I_y$ must hold by exchanging $y$ with $x$. Hence,
	$$
	\textrm{I}_x+\textrm{I}_y\le 2 C_9 \left\{2^{J(m-\mcs)}+
	2^{J(m-\mcs-\mcs')} (\|x-x_0\|^{-2\mcs'}+\|y-x_0\|^{-2\mcs'})
	\right\}.
	$$
	Using \eqref{con2micro}, we
	have $2^{-J}\le \|y-x\|$ and
	we conclude from the above inequality that
	\begin{align*}
		\textrm{I}_x+\textrm{I}_y&\le
		2 C_9 \left\{\|y-x\|^{\mcs-m}+
		\|y-x\|^{\mcs+\mcs'-m} (\|x-x_0\|^{-2\mcs'}+\|y-x_0\|^{-2\mcs'})
		\right\}\\
		&=2C_9 \|y-x\|^{\mcs+\mcs'-m}
		 \left(\|y-x\|^{-\mcs'}+\|x-x_0\|^{-2\mcs'}+\|y-x_0\|^{-2\mcs'}\right)\\
		&\le C_{10} \|y-x\|^{\mcs+\mcs'-m}
		(\|y-x\|+\|x-x_0\|)^{-\mcs'},
	\end{align*}
	where $C_{10}=2C_9(1+2^{-2\mcs'})$.
	%for some constant $C_{10}$ depending only on $C_9$ and $\mcs'$.
	Putting all together, this proves \eqref{kspaceform0}. Hence, $f\in K^{\mcs,\mcs'}_{x_0}(\R^2)$. By Lemma~\ref{lem:timecharcz2microlocal}, we conclude that $f\in C^{\mcs,\mcs'}_{x_0}(\R^2)$.
	Since $\mcs+\mcs'>0$ and $\mcs'<0$, we have $\mcs>0$ and consequently, $C_{x_0}^{\mcs,\mcs'}(\R^2)\subseteq C_{x_0}^\mcs(\R^2)$. So, $f\in C^{\mcs}_{x_0}(\R^2)$.
\end{proof}

We now prove Theorem~\ref{thm:2microsobolevwavefront}.

\begin{proof}[Proof of Theorem~\ref{thm:2microsobolevwavefront}]
	
	Since $f\in C^{\mcs,\mcs'}_{x_0}(\R^2)$, by Theorem~\ref{thm:2micro}, we have
	\begin{align*}
		I:= &\int_{s=s_0-r_0}^{s=s_0+r_0} \int_{t \in  B_{r_0}(x_0)}\int_{a=0}^{a=1}\left|\mathcal{SH}_{\psi}f(a,s,t)\right|^2a^{-2m-3} da dt\ ds \\
		\leq & C^2 \int_{s=s_0-r_0}^{s=s_0+r_0}\int_{t\in B_{r_0}(x_0)}\int_{a=0}^{a=1} a^{\mcs+\lfloor \mcs\rfloor+\frac{3}{2}}\left(1+\left\|\frac{t-x_0}{\sqrt{a}}\right\|^{-\mcs'}\right)^2 a^{-2m-3}dads dt \\
		\leq &
		2C^2 \int_{s=s_0-r_0}^{s=s_0+r_0}\int_{t\in B_{r_0}(x_0)}\int_{a=0}^{a=1} a^{\mcs+\lfloor \mcs\rfloor-2m-\frac{3}{2}} \left(1+\left\|\frac{t-x_0}{\sqrt{a}}\right\|^{-2\mcs'}\right) dads dt\\
		=&4\pi C^2 r_0^3 \int_0^1 a^{\mcs+\lfloor \mcs\rfloor-2m-\frac{3}{2}} da+
		\frac{4C^2 r_0^{3-2\mcs'}}{2(1-\mcs')}
		\int_0^1 a^{\mcs+\mcs'+\lfloor \mcs\rfloor-2m-\frac{3}{2}}da.
	\end{align*}
	Due to $\mcs+\mcs'+\lfloor \mcs\rfloor-2m-\frac{3}{2}>-1$ and $\mcs'<0$, the above last two integrals must be finite. Hence, $I<\infty$. By Theorem~\ref{thm:swf:integral}, we conclude that $(x_0,\xi_0)\notin \wf_m(f)$ for all $\xi_0\in \R^2\bs\{0\}$.
\end{proof}

\begin{comment}
As an application of \ref{thm:2microsobolevwavefront}, we obtain a characterization of the Sobolev wavefront set for a function $f$ to be local H\"older space $ C^{\mcs}_{x_0}$ using the previous result and \ref{lem:timecharcz2microlocal}.

\begin{corollary}
Let $x_0 \in \mathbb R^2$ and $\mcs>0$. Then,     the following necessary and sufficient conditions hold for a real-valued distribution $f\in \mathcal D(\mathbb R^2)$:
\vspace{.05 in}
\begin{itemize}
\item[(i)]  If $f\in C^{\mcs}_{x_0}$ and $m<\min( \frac{\mcs}{2}-\frac{1}{4},-\frac{1}{2})$, then $(x_0, \xi_0)\not\in \wf_m(f)$, for all  $\xi_0\in \R^2\bs\{0\}$ .
\vspace{.05 in}
\item[(ii)]  If $m<\min( \mcs+1,1)$, and $(x_0, \xi_0)\not\in \wf_m(f)$, for all  $\xi_0\in \R^2\bs\{0\}$, then    $f\in C^{\mcs}_{x_0}$.
\end{itemize}
\end{corollary}
\end{comment}

%\section{Applications}
%\label{sec:applications}

%In this section we explore the application aspects of our main results on Sobolev wavefront sets and $2$-microlocal spaces through the continuous shearlet transform.

%\subsection{Anisotropic singularity detection of H\"older continuous functions by the continuous shearlet transform}

We finish this paper by demonstrating anisotropic singularity detection of H\"older continuous functions by the continuous shearlet transform.
For $\mcs>0$, we define a function below:
\begin{equation}\label{holderexample}
	\mathcal{B}_{\mcs}(x)=\begin{cases}
		(1-|x|^2)^{\mcs}, &\text{if $|x|\le 1$,} \\
		0 &\text{if $|x|>1$}.
	\end{cases}
\end{equation}

\begin{figure}[tbhp]
	\centering
	 \subfigure[]{\label{fig:a}\includegraphics[width=5 cm,height=4 cm]{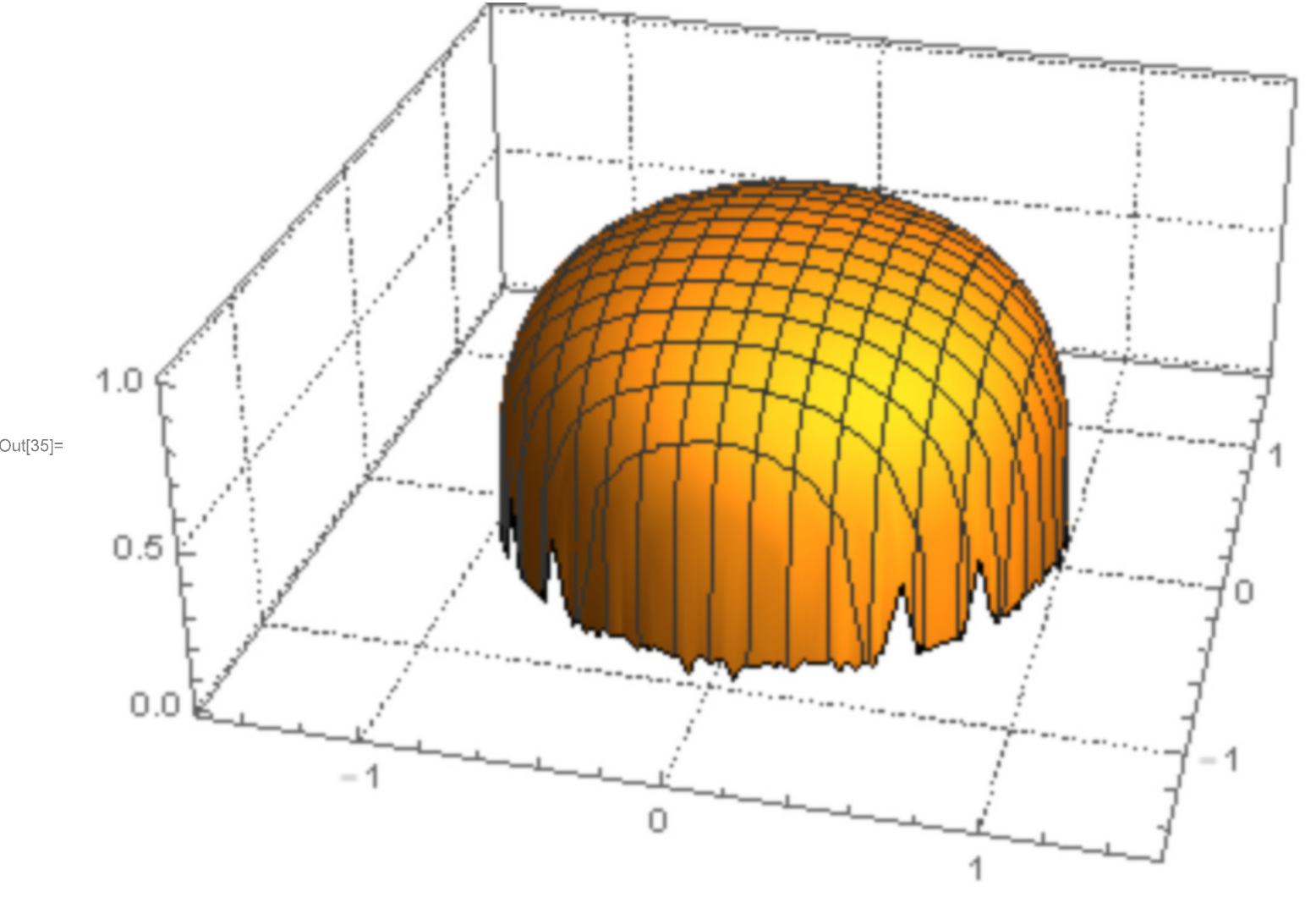}}
	 \subfigure[]{\label{fig:b}\includegraphics[width=5.3cm,height=4.3cm]{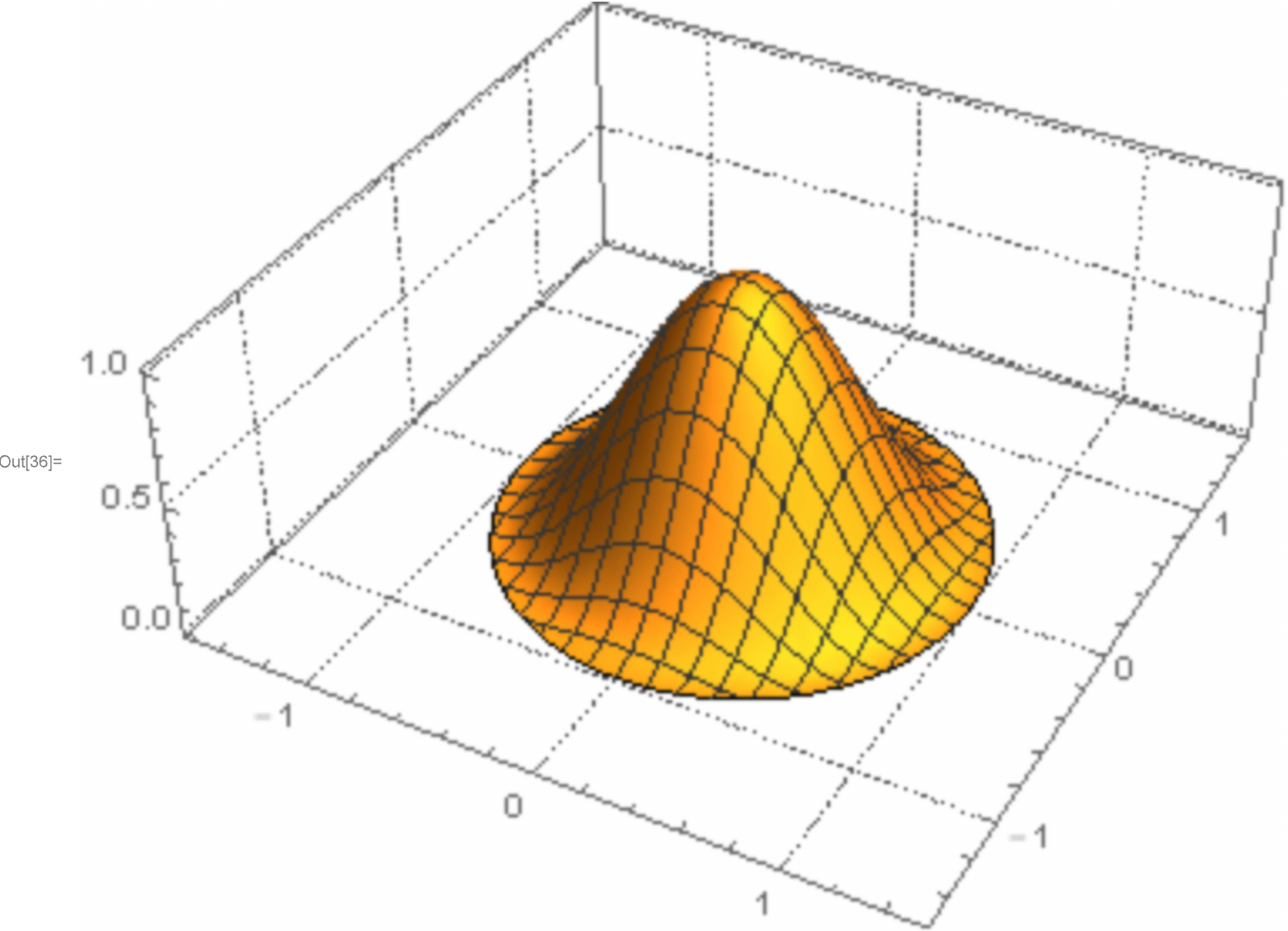}}
	\caption{Plots of $\mathcal{B}_{\mcs}(x)$ for (a) $\mcs=\frac{1}{4}$  and (b) $\mcs=\frac{5}{2}$.}
	\label{fig:testholder}
\end{figure}

Let $S^1:=\{ (x_1,x_2) \in \R^2 \setsp x_1^2+x_2^2=1\}$ be the unit circle in $\R^2$.
The function $\mathcal{B}_\mcs$ is known to be H\"older continuous with exponent $\mcs$ at every point in $S^1$. For illustration purpose, the plots of $\mathcal{B}_{\mcs}(x)$ with $\mcs=\frac{1}{4}$ and $\mcs=\frac{5}{2}$ are displayed in Figure~\ref{fig:testholder}.

We now determine the Sobolev wavefront set of the function $\mathcal{B}_\mcs$.
The following result generalizes
the result in \cite{kutyniok2009resolution} with $\mcs=0$.

\begin{prop} \label{holdersobolevapp}
	Let $\mathcal{B}_\mcs$ be the function defined in \eqref{holderexample} with $\mcs> 0$.
	Then
	\begin{itemize}
		\item[(i)] For $(x_1,x_2)\in S^1$ and $s\ne \frac{x_2}{x_1}$ with $x_1\neq 0$,  $\mathcal{SH}_{\psi}\mathcal{B}_{\mcs}(a,s,t)$ decays rapidly as $a\to 0^+$.
		
		\item[(ii)] For $(x_1,x_2)\in S^1$ and $s=\frac{x_2}{x_1}$ with $x_1\neq 0$,
\begin{equation}\label{est:upper} \mathcal{SH}_{\psi}\mathcal{B}_{\mcs}(a,s,t)= \mathcal{O}(a^{\mcs+\frac{3}{4}}),\qquad \mathcal{SH}^{(v)}_{\psi}\mathcal{B}_{\mcs}(a,s,t) = \mathcal{O}(a^{\mcs+\frac{3}{4}})\quad \mbox{as}\quad a\to 0^+,
\end{equation}
where the above big $\mathcal{O}$ notation is defined in \eqref{cst:decay}.

		\item[(iii)] For $(x_1,x_2)\not \in S^1$,
		 $\mathcal{SH}_{\psi}\mathcal{B}_{\mcs}(a,s,t)$ and $\mathcal{SH}^{(v)}_{\psi}\mathcal{B}_{\mcs}(a,s,t)$ decay rapidly as $a\to 0^+$.
		
		\item[(iv)]  For $(x_1, x_2)\in S^1$ and $m<\mcs-\frac{1}{4}$,
		$((x_1, x_2), \xi_0)\not\in \wf_m(\mathcal{B}_{\mcs})$ for all $\xi_0\in \R^2\bs\{0\}$.
	\end{itemize}
\end{prop}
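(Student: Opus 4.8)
The plan is to reduce the whole proposition to two estimates on $\widehat{\mathcal{B}_\mcs}$, feed them into the band‑limited structure $\supp(\widehat{\psi_{ast}})\subseteq\Xi(a,s)$ recorded in \eqref{supp:hatpsiast}, and then read off the wavefront statement from Theorems~\ref{thm:swf:integral} and~\ref{thm:swf:decay}. The first estimate is the global one,
\[
|\widehat{\mathcal{B}_\mcs}(\xi)|\le C_\mcs\,(1+\|\xi\|)^{-\mcs-3/2},\qquad \xi\in\R^2,
\]
and the second is the directional refinement: for $x_0=(x_1,x_2)\in S^1$ with $x_1\ne0$ and $\varphi\in\mathcal{D}(\R^2)$ equal to $1$ near $x_0$ and supported in a sufficiently small ball, one has $\widehat{\varphi\mathcal{B}_\mcs}(\xi)=\mathcal{O}_N(\|\xi\|^{-N})$ for every $N$ whenever $\xi$ stays in a fixed closed cone avoiding the direction $\pm x_0$, i.e.\ avoiding slope $s_0:=x_2/x_1$ (and again $|\widehat{\varphi\mathcal{B}_\mcs}(\xi)|\lesssim(1+\|\xi\|)^{-\mcs-3/2}$ everywhere).

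To prove the global estimate, write $\mathcal{B}_\mcs(x)=g(\|x\|)$ with $g(r):=(1-r^2)^\mcs_+$; the radial (Hankel) representation $\widehat{\mathcal{B}_\mcs}(\xi)=2\pi\int_0^1 g(r)J_0(2\pi r\|\xi\|)\,r\,dr$, combined with $J_0(\rho)=\mathcal{O}(\rho^{-1/2})$ and the one–dimensional fact that the Fourier transform of $r\mapsto(1-r)^\mcs_+$ decays like $|\cdot|^{-\mcs-1}$ (the one–sided H\"older‑$\mcs$ endpoint at $r=1$), yields the rate $\|\xi\|^{-\mcs-1}\cdot\|\xi\|^{-1/2}=\|\xi\|^{-\mcs-3/2}$, the extra $\|\xi\|^{-1/2}$ being exactly the curvature gain of $S^1$. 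For the directional refinement one localizes near $x_0$, writes the arc of $S^1$ through $x_0$ as a $C^\infty$ graph over its tangent line, and integrates by parts in $x$ against the phase $x\cdot\xi$, which has no critical point on the singular set for $\xi$ transverse to the normal; equivalently this records that $\mathcal{B}_\mcs$ is conormal along $S^1$, so $(x_0,\xi)\notin\wf(\mathcal{B}_\mcs)$ unless $\xi\parallel\pm x_0$. \emph{This first step is the main obstacle}: making the curvature‑induced $\|\xi\|^{-1/2}$ gain precise (nondegenerate stationary phase for the Hankel kernel, or along the $C^2$ curve after localization) and cleanly splitting $\widehat{\varphi\mathcal{B}_\mcs}$ into a normal cone and rapidly decaying transverse directions; everything afterwards is bookkeeping with the tools already in the paper.

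Granting the two estimates, parts (i)–(iii) follow directly. In each case decompose $\mathcal{B}_\mcs=\varphi\mathcal{B}_\mcs+(1-\varphi)\mathcal{B}_\mcs$ with $\varphi=1$ near the point in question; exactly as in the proof of Lemma~\ref{lem:nicepart} (estimate \eqref{est:decayk}), $|\langle(1-\varphi)\mathcal{B}_\mcs,\psi_{ast}\rangle|=\mathcal{O}(a^k)$ for every $k$, uniformly for $t$ in a small ball, since that ball is at positive distance from $\supp((1-\varphi)\mathcal{B}_\mcs)$. For the main term use
\[
|\langle\varphi\mathcal{B}_\mcs,\psi_{ast}\rangle|=\Bigl|a^{3/4}\!\!\int_{\Xi(a,s)}\!\!\widehat{\varphi\mathcal{B}_\mcs}(\xi)\,\overline{\widehat{\psi}(M_{as}^T\xi)}\,e^{2\pi i t\cdot\xi}\,d\xi\Bigr|\le a^{3/4}\|\widehat\psi\|_{L^\infty}\!\!\int_{\Xi(a,s)}\!\!|\widehat{\varphi\mathcal{B}_\mcs}(\xi)|\,d\xi,
\]
which is uniform in $t$, and note from \eqref{xias} that on $\Xi(a,s)$ one has $\|\xi\|\asymp a^{-1}$ and $|\Xi(a,s)|\lesssim a^{-3/2}$. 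For (iii) ($x_0\notin S^1$, so $\varphi\mathcal{B}_\mcs\in C^\infty_c(\R^2)$) the integrand is $\mathcal{O}_N(a^N)$ and the coefficient decays rapidly, and the same argument with $\psi^{(v)}_{ast}$ handles the vertical case. For (i) ($x_0\in S^1$, $x_1\ne0$, $s\ne s_0$) choose the localization and the radius $r_0$ so small that $\Xi(a,s')\subseteq$ a cone avoiding $s_0$ for all $|s'-s|<r_0$, $a<r_0$; the directional estimate then makes the integrand $\mathcal{O}_N(a^N)$, hence rapid decay. For (ii) the global estimate alone gives, uniformly in $s\in[-2,2]$ and $t\in\R^2$, $|\mathcal{SH}_\psi\mathcal{B}_\mcs(a,s,t)|\le C a^{3/4}\cdot a^{\mcs+3/2}\cdot a^{-3/2}=Ca^{\mcs+3/4}$, and likewise for $\mathcal{SH}^{(v)}_\psi\mathcal{B}_\mcs$ since $\widehat{\psi^{(v)}_{ast}}$ is also supported on a shell $\|\xi\|\asymp a^{-1}$.

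Finally, for part (iv) fix $x_0=(x_1,x_2)\in S^1$ and $\xi_0\in\R^2\bs\{0\}$ with slope $s_0$. If $\xi_0\not\parallel\pm x_0$, then by (i) — or, when $x_1=0$ or when the vertical transform is the relevant one, by the analogue obtained from the radial symmetry $\mathcal{B}_\mcs(J\cdot)=\mathcal{B}_\mcs$ and $\mathcal{SH}^{(v)}_\psi f(a,s,t)=\mathcal{SH}_\psi(f(J\cdot))(a,s,Jt)$ — the relevant continuous shearlet transform of $\mathcal{B}_\mcs$ decays rapidly near $(s_0,x_0)$, so the integral in \eqref{squareintegrabilityofshearlet} (or \eqref{1squareintegrabilityofshearlet}) converges for every $m\in\R$ and Theorem~\ref{thm:swf:integral} gives $(x_0,\xi_0)\notin\wf_m(\mathcal{B}_\mcs)$. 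If $\xi_0\parallel\pm x_0$, then by (ii) the relevant transform is $\mathcal{O}(a^{\mcs+3/4})$ near $(s_0,x_0)$, so for a suitable $r_0>0$
\[
\int_{0}^{r_0}\!\!\int_{|s-s_0|<r_0}\!\!\int_{B_{r_0}(x_0)}|\mathcal{SH}_\psi\mathcal{B}_\mcs(a,s,t)|^2a^{-2m-3}\,dt\,ds\,da\ \lesssim\ \int_{0}^{r_0}a^{2\mcs-2m-3/2}\,da,
\]
which is finite precisely when $m<\mcs-\tfrac14$; adding the contribution $a\in[r_0,1]$ (finite by Lemma~\ref{lem:sh:a>r0}) and invoking Theorem~\ref{thm:swf:integral} yields $(x_0,\xi_0)\notin\wf_m(\mathcal{B}_\mcs)$ for all $m<\mcs-\tfrac14$. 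Since $\xi_0$ was arbitrary, (iv) follows.
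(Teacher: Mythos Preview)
Your argument is correct but takes a different route from the paper. The paper exploits the radial structure of $\mathcal{B}_\mcs$ head--on: it computes $\widehat{\mathcal{B}_\mcs}(\xi)=\frac{\Gamma(1+\mcs)}{\pi^\mcs}|\xi|^{-\mcs-1}J_{\mcs+1}(2\pi|\xi|)$ via the Hankel transform, passes to polar coordinates in $\langle\widehat{\mathcal{B}_\mcs},\widehat{\psi_{ast}}\rangle$, and for (i) isolates an angular oscillatory integral $I(a,s,t,\rho)$ whose rapid decay when $s\ne x_2/x_1$ is quoted from \cite[Lemma~4.9 and Proposition~4.7]{kutyniok2009resolution}; for (ii) it performs a precise asymptotic expansion at the critical direction (writing the angular integral as $\eta_a(\rho)\to\eta_0(\rho)$), which incidentally is what later allows the matching lower bound $|\mathcal{SH}_\psi\mathcal{B}_\mcs|\gtrsim a^{\mcs+3/4}$ discussed after the proposition. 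You instead treat $\mathcal{B}_\mcs$ as a conormal distribution along $S^1$: after localizing via $\varphi$, you feed the global decay $|\widehat{\varphi\mathcal{B}_\mcs}|\lesssim(1+\|\xi\|)^{-\mcs-3/2}$ and the directional rapid decay away from the normal into the crude bound $|\langle\varphi\mathcal{B}_\mcs,\psi_{ast}\rangle|\le a^{3/4}\|\widehat\psi\|_{L^\infty}\int_{\Xi(a,s)}|\widehat{\varphi\mathcal{B}_\mcs}|$, using only $|\Xi(a,s)|\asymp a^{-3/2}$ and $\|\xi\|\asymp a^{-1}$ on $\Xi(a,s)$. This is more conceptual and would work verbatim for any $C^2$ curve, not just the circle, but it only yields the upper bound in (ii) --- which is all the statement asks --- and it cannot produce the sharp lower bound. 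Your detailed case split in (iv) also makes explicit what the paper compresses into ``follows easily from Theorem~\ref{thm:swf:integral}.''
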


\begin{proof}
	Note that $\mathcal{B}_\mcs$ is a bivariate radial function by $\mathcal{B}_\mcs(x)=(1-|x|^2)^\mcs\chi_{[0,1]}(|x|)$ for all $x\in \R^2$. Through the polar coordinates, it is well known that its Fourier transform is given by
	$$
	\widehat{\mathcal{B}_{\mcs}}(\xi)
	=2\pi \int_0^\infty J_0(|\xi|t)\mathcal{B}_\mcs(t) t dt
	 =\frac{\Gamma(1+\mcs)}{\pi^{\mcs}}|\xi|^{-\mcs-1}J_{\mcs+1}(2\pi |\xi|),
	$$
	where the Bessel function $J_m (\xi)$ of order $m>-\frac{1}{2}$ is defined to be
	$$
	J_m(2\pi r)=\frac{r^m\pi^{m-\frac{1}{2}}}{\Gamma(m+\frac{1}{2})}\int_{-1}^{1}(1-t^2)^{m-\frac{1}{2}}e^{2\pi i r t}dt.
	$$
	Note that  $\widehat{\mathcal{B}_{\mcs}}(\xi)=\mathcal{O}(|\xi|^{-\mcs-\frac{3}{2}})$ as $|\xi|\to \infty$, because
	$J_m(2\pi r)\sim r^{-\frac{1}{2}}\left(e^{2\pi i r}+e^{-2\pi i r}\right)$ as $r\to \infty$.
	
	\noindent (i)  Using the polar coordinates, we have
	\begin{small}
		\begin{align*}
			 \mathcal{SH}_{\psi}&\mathcal{B}_{\mcs}(a,s,t) =a^{\frac{3}{4}}\int_{\mathbb{R}^2}\wh{\psi_1}(a\xi_1)\wh{\psi_2}\left(a^{-\frac{1}{2}}
			 \left(\frac{\xi_2}{\xi_1}-s\right)\right)e^{2\pi i \xi\cdot t}\widehat{\mathcal{B}_{\mcs}}(\xi)d\xi_1d\xi_2\\
			 &=a^{-\frac{5}{4}}\int_{0}^{\infty}\int_{0}^{2\pi}
			\wh{\psi_1}(\rho \ \cos\ \theta)\wh{\psi_2}\left(a^{-\frac{1}{2}}\left(\tan \theta-s\right)\right)e^{2\pi i \rho (t_1 \cos\theta + t_2\sin\theta)}\widehat{\mathcal{B}_{\mcs}}\left(\frac{\rho}{a}\right)\rho d\theta d\rho \\
			 &=a^{-\frac{5}{4}}\int_{0}^{\infty}I(a,s,t,\rho)\widehat{\mathcal{B}_{\mcs}}
			\left(\frac{\rho}{a}\right)\ \rho \ d\rho,
		\end{align*}
	\end{small}
	where noting that $\supp(\wh{\psi_2})\subseteq [-1,1]$, we define
	\begin{small}
		\begin{eqnarray*}
			I(a,s,t,\rho):= \int_{|\tan\theta-s|<\sqrt{a}}\wh{\psi_1}(\rho \ \cos\ \theta)\wh{\psi_2}\left(a^{-\frac{1}{2}}\left(\tan \theta-s\right)\right)e^{2\pi i \rho (t_1 \cos\theta + t_2\sin\theta)}d\theta.
		\end{eqnarray*}
	\end{small}
	By \cite[Lemma~4.9 and Proposition~4.7]{kutyniok2009resolution},  we know that $I(a,s,t,\rho)$ is an oscillatory integral of the  first kind that decays rapidly as $a\to 0^+$ for each $\rho>0$ when $s \neq x_2/x_1$. That is, for each $N\in \mathbb{N}$, there is a positive constant $C_N$ such that $ \sup_{\rho\in (0,\infty)}\left|I(a,s,t,\rho)\right|\leq C_Na^{\frac{N}{2}}$.
	Hence the continuous shearlet transform $\mathcal{SH}_{\psi}\mathcal{B}_{\mcs}(a,s,t)$ decays rapidly as $a\to 0^+$ when $s \neq x_2/x_1$.

	\noindent (ii)
Due to the rotation symmetry of $S^1$,
	it suffices to consider $x_1=1, x_2=0$ and hence $s=0$.
	Using the change of variable formula, one gets
	\begin{small}
		$$
		 \mathcal{SH}_{\psi}\mathcal{B}_{\mcs}(a,0,(1,0))=
		 a^{-\frac{3}{4}}\int_{0}^{\infty}\eta_a(\rho)\widehat{\mathcal{B}_{\mcs}}
		 \left(\frac{\rho}{a}\right)e^{2\pi i\frac{\rho}{a}}\ \rho \ d\rho
		$$
	\end{small}
	where
	\begin{eqnarray*}
		 \eta_a(\rho)&=&\int_{-(1+a)^{-\frac{1}{2}}}^{(1+a)^{-\frac{1}{2}}}\wh{\psi_1}(\rho\sqrt{1-au^2})\wh{\psi_2}\left(\frac{u}{\sqrt{1-au^2}}\right)e^{2\pi i \frac{\rho}{a}(\sqrt{1-au^2}-1)}\frac{du}{\sqrt{1-au^2}}.
	\end{eqnarray*}
Note that  $\wh{\psi_1}$ is supported inside $[-2,-\frac{1}{2}]\cup[\frac{1}{2},2]$ and $\wh{\psi_2}$ is supported inside $[-1,1]$.
Now it is clear that the function $\eta_a(\rho)$ has compact support w.r.t. $\rho$.
	%Hence the integrand of the integral $\eta_a(\rho)$ are equicontinuous w.r.t. $a$, so
	Hence,
$$
\lim_{a\to0^+}\eta_a(\rho)=\eta_0(\rho):=\int_{-1}^{1}\wh{\psi_1}(\rho)\wh{\psi_2}(u)e^{-\pi i \rho u^2}du=\wh{\psi_1}(\rho)
\int_{-1}^{1}\wh{\psi_2}(u)e^{-\pi i \rho u^2}du.
$$
%\textcolor{violet}{We now assume that $\wh{\psi_2}(u)$ is an even function}
The convergence of the integrand also holds for all derivatives w.r.t. $u$. Therefore, we can write
	\begin{eqnarray*}
		 \mathcal{SH}_{\psi}\mathcal{B}_{\mcs}(a,0,(1,0))&\sim & a^{-\frac{3}{4}}\left(\int_{0}^{\infty}\left(\frac{a}{\rho}\right)^{\mcs+\frac{3}{2}}
\eta_a(\rho)e^{4\pi i \frac{\rho}{a}}\ \rho\  d\rho+\int_0^\infty\left(\frac{a}{\rho}\right)^{\mcs+\frac{3}{2}}\eta_a(\rho)\ \rho\  d\rho\right)\\
		&=& a^{\mcs+\frac{3}{4}}\left(\wh{F_{a,\mcs}}\left(-\frac{2}{a}\right)
+\int_0^\infty F_{a,\mcs}(\rho)d\rho\right),
	\end{eqnarray*}
	where $F_{a,\mcs}(\rho)=\eta_a(\rho)\rho^{-\mcs-\frac{1}{2}}$. By the uniform convergent properties of $\eta_a(\rho)$, $F_{a,\mcs}(\rho)$ and all its derivatives, the function $\wh{F_{a,\mcs}}\left(-\frac{2}{a}\right)$ decays rapidly as $a\to 0^+$. The integral $\int_0^\infty F_{a,\mcs}(\rho)d\rho$ converges to $\int_0^\infty\eta_0(\rho)\rho^{-\mcs-\frac{1}{2}}d\rho$ as $a\to 0^+$.
%Hence the result follows by noting that $\int_0^\infty \eta_0(\rho)\rho^{-\mcs-\frac{1}{2}}d\rho$ is bounded,
Since both $\wh{\eta_0}$ and $\wh{\psi_1}$  vanish on $[0,\infty)\bs [\frac{1}{2},2]$, we have
\begin{align*}
\left|\int_{0}^{\infty}\eta_0(\rho)\rho^{-\tau-\frac{1}{2}}d\rho\right|
=&\left|\int_{0}^{\infty}\rho^{-\tau-1/2}\widehat{\psi}_1(\rho)\left(\int_{-1}^1\widehat{\psi}_2(u)e^{-i\pi \rho u^2}du\right)d\rho\right|\\
\leq &\left|\int_{0}^{\infty}\rho^{-\tau-1/2}|\widehat{\psi}_1(\rho)|\int_{-1}^1|\widehat{\psi}_2(u)|
du d\rho\right|\\
\leq &\sqrt{2}\ \|\psi_2\|_{L^2}\int_{1/2}^{2}\rho^{-\tau-1/2}|\widehat{\psi}_1(\rho)|d\rho
\leq  \sqrt{3}\ 2^{\mcs+1/2}\|\psi_1\|_{L^2}\|\psi_2\|_{L^2}<\infty.
\end{align*}
This proves the first identity in \eqref{est:upper} in item (ii). The second identity in \eqref{est:upper} in item (ii) can be proved similarly.

	\noindent (iii) follows from the localization property of shearlet (cf. \cite[Proposition~3.4]{kutyniok2009resolution}).
	
	\noindent (iv) The result follows easily from Theorem~\ref{thm:swf:integral}.
\end{proof}

Item (ii) in Proposition~\ref{holdersobolevapp} only provides us an upper bound of   shearlet coefficients.
Following the argument of the lower bound estimates on shearlet coefficients in \cite{guo2009characterization} for detecting singularities, by appropriately choosing $\psi_1$ and $\psi_2$, we now show that there exists a positive constant $c>0$ such that
$|\mathcal{SH}_{\psi}\mathcal{B}_{\mcs}(a,s,t)| \ge c a^{\mcs+\frac{3}{4}}$ as $a\to 0^+$.
To prove this claim, by the proof of \eqref{est:upper} in item (ii),
it suffices to prove that there exists $c>0$ such that $\left|\int_0^\infty \eta_0(\rho) \rho^{-\mcs-\frac{1}{2}}d\rho \right|\ge c$. In addition to our assumption in (C1) and (C2) in Subsection~\ref{shearlet}, we now assume that $\wh{\psi_1}(\xi)>0$ for all $\xi\in (-2,-\frac{1}{2})\cup (\frac{1}{2},2)$ and
$\wh{\psi_2}$ is supported inside $[-\frac{1}{2},\frac{1}{2}]$ such that $\wh{\psi_2}(\xi)\ge 0$ for all $\xi\in [-\frac{1}{2},\frac{1}{2}]$. Then
\[
\int_{-1}^1 \wh{\psi_2}(u)e^{-\pi i \rho u^2} du=
 \int_{-\frac{1}{2}}^{\frac{1}{2}} \wh{\psi_2}(u) \cos(\pi \rho u^2)du-i \int_{-\frac{1}{2}}^{\frac{1}{2}} \wh{\psi_2}(u) \sin(\pi \rho u^2)du=\wh{\psi_3}(\rho)-i\wh{\psi_4}(\rho),
\]
where
$\wh{\psi_3}(\rho):=\int_{-1/2}^{1/2}\wh{\psi_2}(u) \cos(\pi \rho u^2)du$ and
$\wh{\psi_4}(\rho):=\int_{-1/2}^{1/2}\wh{\psi_2}(u) \sin(\pi \rho u^2)du$ for $\rho\ge 0$.
Therefore, since all functions $\wh{\psi_1},\ldots, \wh{\psi_4}$ are real-valued functions, we conclude that
\begin{align*}
\left|\int_{0}^{\infty}\eta_0(\rho)\rho^{-\tau-\frac{1}{2}}d\rho\right|
=&\left| \int_{1/2}^2 \rho^{-\tau-\frac{1}{2}} \wh{\psi_1}(\rho) \wh{\psi_3}(\rho)d \rho-i \int_{1/2}^2 \rho^{-\tau-\frac{1}{2}} \wh{\psi_1}(\rho) \wh{\psi_4}(\rho) d\rho\right|\\
\ge & \left|\int_{1/2}^2 \rho^{-\tau-\frac{1}{2}} \wh{\psi_1}(\rho) \wh{\psi_3}(\rho) d\rho\right|,
\end{align*}
For all $\rho\in [\frac{1}{2},2]$ and $|u|\le 1/2$, we have $\cos(\pi \rho u^2)\ge 0$ by $|\pi \rho u^2|<\pi/2$. Since we assumed that $\wh{\psi_2}(\xi)\ge 0$ for $\xi \in [-1/2,1/2]$, we must have $\wh{\psi_3}(\rho)\ge 0$ for all $\rho\in [\frac{1}{2},2]$. Note that $\wh{\psi_3}$ cannot be identically zero on $[\frac{1}{2},2]$.
Hence, we have
\[
\left|\int_{0}^{\infty}\eta_0(\rho)\rho^{-\tau-\frac{1}{2}}d\rho\right|
\ge \int_{1/2}^2 \rho^{-\tau-\frac{1}{2}} \wh{\psi_1}(\rho) \wh{\psi_3}(\rho) d\rho=:c.
\]
Because $\wh{\psi_1}(\rho)>0$ and $\wh{\psi_3}(\rho)\ge 0$
for all $\rho\in (\frac{1}{2},2)$, we must have $c>0$. This proves $|\mathcal{SH}_{\psi}\mathcal{B}_{\mcs}(a,s,t)| \ge c a^{\mcs+\frac{3}{4}}$ as $a\to 0^+$, which provides a lower bound for item (ii) in Proposition~\ref{holdersobolevapp}.

\bigskip
\noindent \textbf{Acknowledgment:} The authors would like to thank the reviewers for their valuable suggestions which improved the presentation of the paper.

\end{document}